\newcommand{\add}{\mathsf{add}}
\newcommand{\Gen}{\mathsf{Gen}}
\newcommand{\ind}{\mathsf{ind}}
\newcommand{\stt}{\mathsf{s}\tau\text{-}\mathsf{tilt}}
\newcommand{\taur}{\tau\text{-}\mathsf{rigid}}
\newcommand{\mods}{\mathsf{mod}}
\newcommand{\proj}{\mathsf{proj}}
\newcommand{\A}{\mathbb{A}}
\renewcommand{\L}{\Lambda}
\newcommand{\bcomp}{\mathrm{B}}
\newcommand{\ccomp}{\mathrm{C}}
\newcommand{\Pcal}{\mathcal{P}}
\DeclareMathOperator{\Hom}{\mathrm{Hom}}
\DeclareMathOperator{\Ext}{\mathrm{Ext}}
\DeclareMathOperator{\im}{\mathrm{im}}
\DeclareMathOperator{\End}{\mathrm{End}}
\DeclareMathOperator{\soc}{\mathrm{soc}}
\DeclareMathOperator{\topp}{\mathrm{top}}
\DeclareMathOperator{\rad}{\mathrm{rad}}
\newcommand{\tauexc}[1]{\ensuremath{\tau\text{-}\mathsf{exc} #1}}
\newcommand{\tfo}[1]{\ensuremath{\mathsf{TF}\text{-} #1}}
\newcommand{\ctauexc}[1]{\ensuremath{\mathsf{c}\text{-}\tau\text{-}\mathsf{exc} #1}}
\newcommand{\freegroup}[1]{\ensuremath{\mathsf{F}_{#1}}}
\newtheorem{theorem}{Theorem}[section]
\newtheorem{corollary}[theorem]{Corollary}
\newtheorem{lemma}[theorem]{Lemma}
\newtheorem{proposition}[theorem]{Proposition}
\newtheorem{setting}[theorem]{Setting}
\theoremstyle{definition}
\newtheorem{definition}[theorem]{Definition}
\newtheorem{definitionproposition}[theorem]{Definition-Proposition}
\newtheorem{example}[theorem]{Example}
\newtheorem{remark}[theorem]{Remark}
\title{}
\title[A braid group action on $\tau$-exceptional sequences]{Classifying Nakayama algebras with a braid group action on $\tau$-exceptional sequences}
\author{Maximilian Kaipel}
\address{Maximilian Kaipel, Fakultät für Mathematik, Universität Bielefeld, 33501 Bielefeld, Germany}
\email{mkaipel@math.uni-bielefeld.de}
\author{Håvard U. Terland}
\address{Håvard U. Terland, Department of Mathematical Sciences, Norwegian University of Science and Technology (NTNU), 7491 Trondheim, NORWAY}
\email{havard.u.terland@ntnu.no}
\thanks{H.U.T. was supported by grant number FRINAT 301375 from the Norwegian Research Council. H.U.T. wishes to thank Aslak Buan for proposing investigating whether the mutation of $\tau$-exceptional sequences over certain Nakayama algebras respects the braid group relations and for proofreading multiple iterations of an earlier version of the paper. M.K. thanks H.U.T. for encouraging him to join the project for the second version. M.K. acknowledges support from the Deutsche Forschungsgemeinschaft (DFG, German Research Foundation) – Project ID 281071066 – TRR 191}
\begin{document}

\begin{abstract}
We characterise those basic and connected Nakayama algebras $\Lambda$ for which the mutation of $\tau$-exceptional sequences respects the braid group relations. We show that this is the case if and only if $\Lambda$ is hereditary or all indecomposable projective $\Lambda$-modules have length at least $|\Lambda|$.
	\end{abstract}

\maketitle

\section{Introduction}
Exceptional sequences for finite dimensional algebras were first studied in \cite{cbw92} and \cite{ringel_exceptional}. Crawley-Boevey showed that there is a transitive braid group action on complete exceptional sequences for hereditary algebras over algebraically closed fields, and Ringel generalized this to arbitrary hereditary artin algebras. For non-hereditary algebras, however, exceptional sequences are not as well-behaved and complete exceptional sequences might not exist. Using $\tau$-tilting theory, Buan and Marsh proposed in \cite{tauexceptional_buanmarsh} a generalization of exceptional sequences for finite dimensional algebras by introducing \textit{$\tau$-exceptional sequences}, which coincide with exceptional sequences for hereditary algebras. However, it was only recently that a mutation operation on these $\tau$-exceptional sequences was suggested: In \cite{BHM2024}, Buan, Hanson and Marsh define a mutation on $\tau$-exceptional sequences for general finite dimensional algebras, which coincides with the classical mutation for hereditary algebras. Further, Nonis shows in \cite{nonis2025} that the mutation  of $\tau$-exceptional sequences over algebras of the form $R \otimes kQ$, for $Q$ an acyclic quiver and $R$ a finite dimensional commutative local $k$-algebra, also coincides exactly with the mutation of $\tau$-exceptional sequences over $kQ$, and therefore that this mutation respects the braid group relations.

In this paper, we continue the hunt for algebras such that the mutation on $\tau$-exceptional sequences over them gives a braid group action. In particular we utilize the properties of mutation of $\tau$-exceptional sequences over Nakayama algebras developed in \cite{bkt} to connect the work of Crawley-Boevey \cite{cbw92} and Ringel \cite{ringel_exceptional} with the work of Buan, Marsh and Hanson \cite{BHM2024} by proving the following theorem.

\begin{theorem}[\cref{thm:braid_group_relations_satisfied_on_cn}]\label{thm:introthm1}
    Let $C_n$ be the Nakayama algebra given as a quotient of the $n$-cycle path algebra over a field $k$, modulo the ideal generated by paths of length $n$. Then mutation of $\tau$-exceptional sequences over $C_n$ respects the braid group relations.
\end{theorem}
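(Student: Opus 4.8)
The plan is to check the two families of defining relations of the braid group $B_n$ directly on complete $\tau$-exceptional sequences over $C_n$: the commutation relations $\sigma_i\sigma_j=\sigma_j\sigma_i$ for $|i-j|\ge 2$, and the braid relations $\sigma_i\sigma_{i+1}\sigma_i=\sigma_{i+1}\sigma_i\sigma_{i+1}$. The basic structural input is the locality of the mutation of \cite{BHM2024} (which is everywhere defined and reversible): mutating $(M_1,\dots,M_n)$ at position $i$ changes only the entries $M_i,M_{i+1}$ and preserves the wide subcategory generated by the pair $(M_i,M_{i+1})$, hence leaves every other entry of the sequence, and every $\tau$-perpendicular category in which an entry lives, unchanged. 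Granting this, the commutation relations follow readily, since for $|i-j|\ge 2$ the operations $\sigma_i$ and $\sigma_j$ modify disjoint pairs of entries, each inside ambient categories that the other leaves fixed. For a braid relation $\sigma_i\sigma_{i+1}\sigma_i=\sigma_{i+1}\sigma_i\sigma_{i+1}$, only the entries $M_i,M_{i+1},M_{i+2}$ are ever touched, and throughout the computation they constitute a $\tau$-exceptional sequence of length three inside the \emph{fixed} wide subcategory $\Lambda':=J(M_{i+3},\dots,M_n)$. So it suffices to prove the braid relation for every $\tau$-exceptional sequence of length three over every algebra $\Lambda'$ occurring as an iterated $\tau$-perpendicular category of $C_n$, with $C_n$ itself included.

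I would then pin down these algebras using \cite{bkt}. Two cases are elementary: since $C_n/C_ne_iC_n$ is the path algebra of a linearly oriented quiver of type $A_{n-1}$, hence hereditary, one gets $J(P_i)\simeq\mods kA_{n-1}$ for every indecomposable projective $P_i$, while $J(S_i)\simeq\mods C_{n-1}$ for every simple $S_i$. In general I expect every iterated $\tau$-perpendicular category of $C_n$ to be the module category of a finite product of hereditary path algebras $kA_m$ and of algebras $C_m$ with $m<n$. This splits the verification into three pieces. Over the hereditary factors $kA_m$ the braid relation for mutation of $\tau$-exceptional sequences is, by \cite{BHM2024}, the braid relation for mutation of ordinary exceptional sequences, which holds by Crawley--Boevey \cite{cbw92} and Ringel \cite{ringel_exceptional}. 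Over the factors $C_m$ with $m<n$ it holds by induction on $n$, the base cases $n\le 2$ being vacuous since $B_1$ and $B_2$ have no relations. And since the mutation is compatible with products, the relation over a product follows from the relations over its factors.

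The remaining, and decisive, case is the braid relation for a $\tau$-exceptional sequence $(X,Y,Z)$ of length three over $C_n$ itself. Here $Z$ is a $\tau$-rigid indecomposable $C_n$-module, the pair $(X,Y)$ lives over the strictly smaller algebra $J(Z)$, which is of the product form above and so handled by induction, and only the mutations of the pair containing $Z$ genuinely use the non-hereditary algebra $C_n$. I would carry these out via the explicit combinatorial model of mutation over cyclic Nakayama algebras of \cite{bkt}, and then verify $\sigma_1\sigma_2\sigma_1=\sigma_2\sigma_1\sigma_2$ on the resulting finite list of configurations, organising the case analysis by the length of $Z$ and cutting down the number of cases using the $\mathbb{Z}/n$ rotational symmetry of $C_n$ and the self-duality of $\mods C_n$ (which interchanges submodules with quotient modules, and the two directions of mutation).

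I expect the main obstacle to be precisely this last step. The danger lies in the behaviour of mutation on the ``wrap-around'' entries — the indecomposable projectives, of length $n$ — since it is exactly there that the failure of $C_n$ to be hereditary is concentrated, and the combinatorics of \cite{bkt} describing how mutation passes through such entries, and moves between the (varying) reduced categories $J(Z)$, is the delicate point. Making the match with the hereditary picture precise there, so that the theorem of Crawley--Boevey and Ringel can be brought to bear, is where the real work of the proof lies.
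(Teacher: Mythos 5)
Your reduction is essentially the one the paper uses: locality of mutation gives the commutation relations and reduces B2 to length-three sequences over iterated $\tau$-perpendicular categories of $C_n$; these are exactly products of algebras $kA_m$ and $C_m$ (the paper's class $\mathcal{N}$, shown to be closed under Jasso reduction via the results of Msapato and Obaid et al.); the hereditary and product factors are dispatched by Crawley--Boevey and Ringel. Up to that point you and the paper agree.

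The gap is in what you yourself call the decisive case, and it is a genuine one: you propose to verify $\sigma_1\sigma_2\sigma_1=\sigma_2\sigma_1\sigma_2$ directly on triples $(X,Y,Z)$ over $C_n$ by an explicit case analysis, but you do not carry it out, and the route as described is substantially harder than it needs to be. Composing three mutations forces you to track pairs inside the \emph{changing} reductions $J(Z)$, $J(Z')$, \dots, which are themselves products of $A$'s and $C$'s, and at the end you must still prove that the two leftmost entries of the resulting triples coincide. The paper avoids all of this by isolating two properties of mutation of \emph{pairs}: (M1) $\varphi(B,C)=(?,B)$ for every $\tau$-exceptional pair, and (M2) $\varphi^{J(X)}(B,C)=\varphi(B,C)$ whenever $(B,C)$ is $\tau$-exceptional both in $\mods\Lambda$ and in $J(X)$. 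Given M1 and M2 for every algebra in a Jasso-closed class, the braid relation follows \emph{formally}: both sides of B2 compute to sequences agreeing in the last two coordinates, and the first coordinate is then forced by the uniqueness theorem of Hanson--Thomas (a complete $\tau$-exceptional sequence is determined by all but one of its entries). The real content of the paper is then the verification of M1 (using that $\Hom_{C_n}(C,B)=0$ outside the Bongartz cases) and especially M2 (that $\tau$-rigidity and the case distinctions TF-1 through TF-4 of the mutation formulas are insensitive to passing from $\mods C_n$ to $J(X)$), each a case analysis on \emph{pairs} using the formulas of \cite{bkt}. Your proposal correctly locates where the difficulty sits --- the interaction between mutation and the varying reduced categories --- but without something like M2 and the uniqueness statement you have no mechanism for closing the argument, and the ``finite list of configurations'' you defer to is precisely the part of the proof that has not been done.
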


The algebra $C_n$ is such that every indecomposable projective module $P$ has length equal to $n$. We leverage \cref{thm:introthm1} to show that the result remains true when considering Nakayama algebras all of whose indecomposable projective modules have length at least $n$. Our approach may be seen as an application of Drozd--Kirichenko rejection \cite{DrozdKirichenko}, see also \cite[Sec. 3]{Adachi2016}.

\begin{theorem}[\cref{thm:longerlengths}, reformulated]\label{thm:introthm2}
    Let $\Lambda$ be a basic and connected Nakayama algebra such that $|\Lambda|=n$ and $\ell(P) \geq n$ for all indecomposable projective $\Lambda$-modules $P$. There is a bijection between $\tau$-exceptional sequences in $\mods \Lambda$ and $\tau$-exceptional sequences in $\mods C_n$. Moreover, this bijection is compatible with the mutation of complete $\tau$-exceptional sequences. Consequently, the mutation of $\tau$-exceptional sequences over $\Lambda$ respects the braid group relations.
\end{theorem}

It should be noted that the conditions (M1) and (M2), see \cref{prop:properties_mutation_braid_group}, which are central to the proof of \cref{thm:introthm1}, do not generally hold for the Nakayama algebras in \cref{thm:introthm2}. Nonetheless, the poset of support $\tau$-tilting modules in $\mods \Lambda$ is isomorphic to the poset of support $\tau$-tilting modules in $\mods C_n$ by \cite[Thm. 3.11]{Adachi2016}. To complete the classification, we then show that these are essentially all Nakayama algebras for which the mutation of complete $\tau$-exceptional sequences satisfies the braid group relations. The only remaining ones are the hereditary Nakayama algebras. 

\begin{theorem}[\cref{thm:notethm1}]\label{thm:introthm3}
    Let $\Lambda$ be a basic and connected Nakayama algebra. Assume that there is an indecomposable projective $\Lambda$-module $P$ with $\ell(P) < |\Lambda|$. Then the mutation of $\tau$-exceptional sequences satisfies the braid group relations if and only if $\Lambda$ is hereditary. 
\end{theorem}

Combining the previous theorems, we obtain the following characterisation.
\begin{theorem}\label{thm:introthm4}
    Let $\Lambda$ be a basic and connected Nakayama algebra. Then the mutation of $\tau$-exceptional sequences in $\mods \Lambda$ satisfies the braid group relations if and only if $\Lambda$ is hereditary or $\ell(P) \geq |\Lambda|$ for all indecomposable projective $\Lambda$-modules.
\end{theorem}

The paper is structured as follows: We first recall the basics of $\tau$-tilting theory including $\tau$-exceptional sequences, then discuss mutation of $\tau$-exceptional sequences generally before working with a specific class of Nakayama algebras to obtain \cref{thm:introthm1} in \cref{sec:cycliccase}. In \cref{sec:longerprojs} we show that an explicit bijection between TF-ordered $\tau$-rigid modules gives coinciding mutation theories of complete $\tau$-exceptional sequences \cref{thm:introthm2}. In the final section, we show that these are the only non-hereditary examples by constructing explicit counterexamples, which proves \cref{thm:introthm3}.

\section{Background}
All algebras are in this paper assumed to be basic, finite dimensional algebras over a field $k$. Given an algebra $\Lambda$, we denote by $\mods \Lambda$ the category of finite dimensional left $\Lambda$-modules. We assume all modules to be basic, and denote by $|M|$ and $\ell(M)$ the number of indecomposable direct summands and the length of a module $M$ respectively. We let $\proj \Lambda$ denote the subcategory of projective $\Lambda$-modules. We reserve the constant $n$ to refer to the rank of an algebra $\Lambda$. 

Given a module $M$ in $\mods \Lambda$, we denote by $\add M$ the subcategory of modules which are direct sums of direct summands of $M$. We denote by $\Gen M$ the subcategory of modules $N$ such that there exists an epimorphism in $\Hom(\add M,N)$. We also have the left and right \textit{perpendicular subcategories} $M^\perp = \{X \in \mods \Lambda \mid \Hom_\Lambda(M,X) = 0\}$ and ${}^\perp M = \{X \in \mods \Lambda \mid \Hom_\Lambda(X,M) = 0\}$.

Given a subcategory $\mathcal{X}$ of $\mods \Lambda$ and a $\Lambda$-module $M$, a \textit{left $\mathcal{X}$-approximation} of $M$ is a map $f\colon M \to X$ where $X \in \mathcal{X}$ such that any map $g\colon M \to X'$ for $X' \in \mathcal{X}$ factors through $f$, i.e, there exists a morphism $h\colon X \to X'$ such that $h \circ f = g$. Right approximations are defined dually. An approximation is \textit{minimal} if the map is minimal in the usual sense.

\subsection{$\tau$-tilting theory}
Closely following \cite{tau}, we recall the basics of $\tau$-tilting theory. 

\begin{definition}\cite[Def. 0.1]{tau}
    A module $M$ is called \textit{$\tau$-rigid} if $\Hom_\Lambda(M,\tau M) = 0$, where $\tau$ denotes the Auslander-Reiten translate in $\mods \Lambda$. A $\tau$-rigid module $M$ with $|M| = n$ is called a $\tau$-tilting module.
\end{definition}

Given a $\tau$-rigid module $M$, one might ask which modules $X$ are compatible with $M$ in the sense that $M \oplus X$ is $\tau$-rigid. We are in particular interested in two dual notions of such compatible modules, namely those that maximize $\Gen (M\oplus X)$ and those that minimize it. 

\begin{definition}
    Let $M$ be a $\tau$-rigid module. Let $X \notin \add M$ be an indecomposable module such that $X\oplus M$ is $\tau$-rigid.

    \begin{itemize}
        \item We say that an indecomposable module $X$ is a direct summand of the \textit{Bongartz complement} $\bcomp(M)$ of $M$ if and only if $X$ is not generated by any $\tau$-rigid module $M\oplus N$ unless $N$ has $X$ as a summand.
        \item We say that an indecomposable module $X$ is a direct summand of the \textit{co-Bongartz complement} $\ccomp(M)$ of $M$ if and only if $X$ is generated by $M$.
    \end{itemize}
    
\end{definition}

\begin{remark}
    By \cite[Thm. 4.4]{dirrt} the above definition is equivalent to the definition of Bongartz completions appearing in \cite[Thm. 2]{tau}.
    
    If $|M| = r$ then $|\bcomp(M)| = n-r$ and $|\ccomp(M)| \leq n-r$. In particular, $M \oplus \ccomp(M)$ need not be $\tau$-tilting over $\Lambda$ but will always be a $\tau$-tilting module over $\Lambda/\Lambda e \Lambda$ for some idempotent $e$. Some authors introduce non-module (shifted projective) summands to appear in the co-Bongartz complement that convey equivalent information to the idempotent $e$ appearing above. We refer the reader to \cite[Thm. 2.10 and 2.18]{tau}, \cite[Sec. 4]{dirrt} and \cite[Sec. 2 and 4]{bkt} for details.
\end{remark}

Given a $\tau$-rigid module $M$, the category $J(M) = M^\perp \cap {}^\perp \tau M$ is called the \textit{$\tau$-perpendicular category} of $M$ \cite[Def. 3.3]{jassoreduction}. Jasso  \cite{jassoreduction} showed that $J(M) \cong \mods \Gamma$ for some algebra $\Gamma$ of rank $n-|M|$ and that there is an intimate relationship between the $\tau$-tilting theory of $\Lambda$ and that of $\Gamma$.

Using the work of Jasso, Buan and Marsh \cite{tauexceptional_buanmarsh} generalized the notion of exceptional sequences for hereditary algebras to arbitrary finite dimensional algebras.

\begin{definition}\cite[Def. 1.3]{tauexceptional_buanmarsh}
    A sequence of $\Lambda$-modules $(A_1,A_2,\dots,A_t)$ is a \textit{$\tau$-exceptional sequence }in $\mods\Lambda$ if
    
    \begin{enumerate}
        \item $A_t$ is $\tau$-rigid indecomposable in $\mods \Lambda$, and
        \item if $t > 1$, then $(A_1,A_2,\dots,A_{t-1})$ is a $\tau$-exceptional sequence in $J(A_t)$.
    \end{enumerate}
\end{definition}

We let $\tauexc{\Lambda}$ denote the set of $\tau$-exceptional sequences over $\Lambda$ and $\ctauexc{\Lambda}$ the set of \textit{complete} $\tau$-exceptional sequences, that is, those of length $n = |\Lambda|$. 

It is important to consider that $\tau$ depends on the category in which it is being computed, so when talking about $\tau$-exceptional sequences we always need to keep in mind which ambient category we are in. If, for example, $(A_1,A_2,A_3)$ is a $\tau$-exceptional sequence in $\mods \Lambda$, then $(A_1,A_2)$ is a $\tau$-exceptional sequence in $J(A_3)$. However, $(A_1,A_2)$ need not be a $\tau$-exceptional sequence in $\mods \Lambda$.

We make use of subscripts (and sometimes superscripts) when we want to make explicit in which subcategory a computation is taking place, so we could write that $A_2$ is $\tau_{J(A_3)}$-rigid or that $A_1$ is $\tau_{J_{A_3}(A_2)}$-rigid. This last example demonstrates the practicality of recursively extending the definition of $\tau$-perpendicular categories. Following \cite{BHM2024}, we thus make the following definition.

\begin{definition}
    Let $(A_1,A_2,\dots,A_t)$ be a $\tau$-exceptional sequence in some given ambient module category. Then we define \[J(A_1,A_2,\dots,A_t) = \begin{cases}
        J(A_1) & \text{if $t = 1$} \\
        J_{J(A_2,A_3,\dots,A_t)}(A_1) & \text{else.}  \end{cases}\]
\end{definition}

Note that complete $\tau$-exceptional sequences always exist: For an arbitrary module category $\mods \Lambda$ any projective module is $\tau$-rigid. In particular, we can always extend any incomplete $\tau$-exceptional sequence $(B_1,B_2,\dots,B_t)$ to a longer $\tau$-exceptional sequence $(P,B_1,\dots,B_t)$ by picking $P$ to be an indecomposable projective in $J(B_1,B_2,\dots,B_t)$.

\begin{remark}
    There is a more general notion of \textit{signed} $\tau$-exceptional sequences. For brevity, we here skip this definition and refer the interested reader to \cite{tauexceptional_buanmarsh}, \cite{BHM2024} and \cite{bkt} for details on signed $\tau$-exceptional sequences especially in the context of mutation of $\tau$-exceptional sequences.
\end{remark}

The following theorem gives a useful uniqueness-property of complete $\tau$-exceptional sequences, namely that if we know $n-1$ coordinates of a complete $\tau$-exceptional sequence, then the remaining coordinate is uniquely determined.

\begin{theorem}\cite[Thm. 8]{hanson_thomas_unique}\label{thm:tau_exceptional_missing_one}
Let $A = (A_1,A_2,\dots,A_n)$ and $B = (B_1,B_2,\dots,B_n)$ be complete $\tau$-exceptional sequences such that $A_i = B_i$ for all $1 \leq i < j$ and all $j < i \leq n$ for some integer $j$. Then $A_j = B_j$.
\end{theorem}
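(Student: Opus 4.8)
The plan is to argue by induction on $n$, starting from a reduction to the case $j=n$. Since $A_i=B_i$ for every $i>j$, the two $\tau$-perpendicular categories $J(A_{j+1},\dots,A_n)$ and $J(B_{j+1},\dots,B_n)$ coincide; call this common category $\mathcal{C}$. Iterating the defining recursion of a $\tau$-exceptional sequence shows that both $(A_1,\dots,A_j)$ and $(B_1,\dots,B_j)$ are complete $\tau$-exceptional sequences in $\mathcal{C}$, and by Jasso's theorem $\mathcal{C}\simeq\mods\Gamma$ for an algebra $\Gamma$ of rank $j$. These two sequences agree in positions $1,\dots,j-1$, so after replacing $\mods\Lambda$ by $\mathcal{C}$ we may assume $j=n$. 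Then $(A_1,\dots,A_{n-1})$ is a complete $\tau$-exceptional sequence in \emph{both} $J(A_n)$ and $J(B_n)$, and the goal becomes to deduce $A_n=B_n$.

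The technical core is the claim that a complete $\tau$-exceptional sequence $(X_1,\dots,X_m)$ determines the (rank $m$) $\tau$-perpendicular subcategory in which it is complete: if it is complete in two $\tau$-perpendicular subcategories $\mathcal{W}$ and $\mathcal{W}'$, then $\mathcal{W}=\mathcal{W}'$. I would prove this by induction on $m$, the case $m=0$ being vacuous. For $m\geq 1$, the module $X_m$ is $\tau$-rigid indecomposable in both $\mathcal{W}$ and $\mathcal{W}'$, and $(X_1,\dots,X_{m-1})$ is complete in $J_{\mathcal{W}}(X_m)$ and in $J_{\mathcal{W}'}(X_m)$; since a $\tau$-perpendicular subcategory of a $\tau$-perpendicular subcategory is again one, the inductive hypothesis gives $J_{\mathcal{W}}(X_m)=J_{\mathcal{W}'}(X_m)=:\mathcal{U}$, a subcategory of rank $m-1$. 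Now $\mathcal{W}$ is a wide subcategory containing $\mathcal{U}$ together with the object $X_m$, and $X_m\notin\mathcal{U}$ because $\Hom(X_m,X_m)\neq 0$; hence the wide closure of $\mathcal{U}\cup\{X_m\}$ properly contains $\mathcal{U}$, so it has rank at least $m$, and being contained in the rank-$m$ category $\mathcal{W}$ it must equal $\mathcal{W}$. The identical computation identifies this wide closure with $\mathcal{W}'$, and since $\mathcal{U}$ and $X_m$ do not depend on which of $\mathcal{W},\mathcal{W}'$ we started from, $\mathcal{W}=\mathcal{W}'$.

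Applying this claim to $(A_1,\dots,A_{n-1})$ yields $J(A_n)=J(B_n)$. To conclude I would invoke the fact that a $\tau$-rigid indecomposable module is recovered from its $\tau$-perpendicular category, i.e. $X\mapsto J(X)$ is injective on $\tau$-rigid indecomposables — a rigidity property of $\tau$-tilting reduction, which one can extract from the correspondence between $\tau$-perpendicular subcategories and suitable intervals in the lattice of torsion classes. This gives $A_n=B_n$, closing the induction.

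I expect the main obstacle to be twofold. First, the inductive step of the core claim rests on a package of facts about wide subcategories and ranks — that $\tau$-perpendicular subcategories are wide, that they are stable under iterated $\tau$-tilting reduction (and, importantly, that one may regard a $\tau$-perpendicular subcategory of such a subcategory as a $\tau$-perpendicular subcategory of the original module category, so the induction closes), that $J_{\mathcal{W}}(X)$ has rank one less than $\mathcal{W}$ for $X$ indecomposable, and that a proper inclusion of wide subcategories strictly raises the rank — each of which is in the literature but must be cited in exactly the right form. Second, and more essentially, one needs the injectivity of $X\mapsto J(X)$ on $\tau$-rigid indecomposables: without it the two sequences could a priori live in the same ambient category yet differ at position $j$, so this is precisely where the hypothesis that both sequences are $\tau$-exceptional in the \emph{same} $\mods\Lambda$ does its work.
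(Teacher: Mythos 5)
The paper does not prove this statement; it is imported verbatim from \cite[Theorem 8]{hanson_thomas_unique}, so there is no in-paper argument to compare against and your attempt has to be judged on its own. The first part of your plan is sound: since $A_i=B_i$ for $i>j$, the categories $J(A_{j+1},\dots,A_n)$ and $J(B_{j+1},\dots,B_n)$ coincide and one may assume $j=n$. Your ``core claim'' (a complete $\tau$-exceptional sequence determines the $\tau$-perpendicular category in which it is complete) is also plausibly provable along the lines you sketch, at least in the $\tau$-tilting finite setting, using that iterated $\tau$-perpendicular subcategories are again $\tau$-perpendicular and that, for $\tau$-tilting finite algebras, the only semibrick of size equal to the rank is the set of simples, so a proper inclusion of wide subcategories strictly drops the rank. (Be aware that this last point genuinely needs a finiteness hypothesis: over the Kronecker algebra there are proper inclusions of wide subcategories of equal rank, so the rank argument does not hold verbatim for arbitrary algebras, whereas the theorem is stated without such a restriction.)

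The genuine gap is the last step. Your argument reduces the theorem to the assertion that $X\mapsto J(X)$ is injective on $\tau$-rigid indecomposable modules, and you do not prove this; you only gesture at ``the correspondence between $\tau$-perpendicular subcategories and intervals in the lattice of torsion classes.'' That correspondence does not yield injectivity for free: the passage from $\tau$-rigid data to $\tau$-perpendicular categories is many-to-one already for support $\tau$-rigid pairs --- for example the pairs $(P_i,0)$ and $(0,P_i[1])$ both have perpendicular category $\mods(\Lambda/\Lambda e_i\Lambda)$ --- so ruling out two distinct indecomposable \emph{modules} with the same perpendicular category requires a real argument. Worse, once your core claim is granted, the statement ``$J(X)=J(Y)$ implies $X=Y$ for $\tau$-rigid indecomposables'' is \emph{equivalent} to the $j=n$ case of the theorem: given $J(X)=J(Y)$, complete any full $\tau$-exceptional sequence of that common category on the right by $X$ and by $Y$ and apply the theorem. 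So the unproven injectivity is not an auxiliary fact one can extract from standard reduction theory; it is the whole content of the result, and it is exactly what the Hanson--Thomas paper supplies. As written, your proposal is a correct reformulation of the theorem rather than a proof of it.
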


The following corollary will be useful to us. 

\begin{corollary}\label{cor:tau_uniqueness_jasso_preserve}
   Let $(X,C)$ be a $\tau$-exceptional sequence. Then $X$ is uniquely determined by $C$ and $J(X,C)$.
\end{corollary}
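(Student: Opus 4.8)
The plan is to reduce everything to \cref{thm:tau_exceptional_missing_one} by padding $(X,C)$ (and any competing pair $(X',C)$ with the same $\tau$-perpendicular category) to \emph{full} $\tau$-exceptional sequences of $\mods\Lambda$ that agree in all but one slot. Concretely, suppose $(X,C)$ and $(X',C)$ are both $\tau$-exceptional sequences in $\mods\Lambda$ with $J(X,C)=J(X',C)$; the goal is to deduce $X=X'$. First I would record the ranks involved: since $C$ is indecomposable, $J(C)$ has rank $n-1$, and since $X$ (resp.\ $X'$) is indecomposable in $J(C)$, the category $J(X,C)=J_{J(C)}(X)$ has rank $n-2$. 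Choose a complete $\tau$-exceptional sequence $\mathbf{D}=(D_1,\dots,D_{n-2})$ in the common category $\mathcal{W}:=J(X,C)=J(X',C)$; such a sequence exists by the standard extension argument recalled above (repeatedly prepending an indecomposable projective of the relevant $\tau$-perpendicular category).

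Next I would verify that $(D_1,\dots,D_{n-2},X,C)$ is a complete $\tau$-exceptional sequence in $\mods\Lambda$, and likewise with $X$ replaced by $X'$. Unwinding the recursive definition: $C$ is $\tau$-rigid indecomposable in $\mods\Lambda$; $X$ is $\tau_{J(C)}$-rigid indecomposable; and $(D_1,\dots,D_{n-2})$ is, by construction, a $\tau$-exceptional sequence in $J_{J(C)}(X)=J(X,C)=\mathcal{W}$. Hence $(D_1,\dots,D_{n-2},X)$ is a $\tau$-exceptional sequence in $J(C)$, so $(D_1,\dots,D_{n-2},X,C)$ is one in $\mods\Lambda$, and it has length $n$, so it is full. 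The identical computation, using $J(X',C)=\mathcal{W}$, shows $(D_1,\dots,D_{n-2},X',C)$ is a full $\tau$-exceptional sequence in $\mods\Lambda$ as well.

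These two full $\tau$-exceptional sequences of $\mods\Lambda$ agree in coordinates $1,\dots,n-2$ and in coordinate $n$, so they differ at most in coordinate $n-1$. Applying \cref{thm:tau_exceptional_missing_one} with $j=n-1$ yields $X=X'$, as claimed. (If $n=2$ the sequence $\mathbf{D}$ is empty and $(X,C)$, $(X',C)$ are themselves the full sequences, so the argument goes through verbatim.)

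I do not expect a genuine obstacle here: the statement is a direct corollary, and the only thing requiring care is the bookkeeping — keeping track of which ambient category each entry is $\tau$-rigid in, and checking that the recursively defined $J(\,\cdot\,)$ of the concatenated sequence is exactly the prescribed category $\mathcal{W}$, so that \cref{thm:tau_exceptional_missing_one} applies. All of the mathematical content is carried by that theorem.
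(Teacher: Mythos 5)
Your proof is correct and follows essentially the same route as the paper: both pad $(X,C)$ to a complete $\tau$-exceptional sequence using a fixed complete sequence in $J(X,C)$ and then invoke \cref{thm:tau_exceptional_missing_one} at the single differing coordinate. Your version merely makes explicit the two competing sequences $(X,C)$, $(X',C)$ and the verification that the concatenations are genuinely full $\tau$-exceptional sequences, which the paper leaves implicit.
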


\begin{proof}
    Extend $(X,C)$ to a complete $\tau$-exceptional sequence $(A_1,A_2,\dots,A_{n-2},X,C)$ using the projective modules determined by $J(X,C)$. Then $X$ must be uniquely determined by \cref{thm:tau_exceptional_missing_one}.
\end{proof}

There is a bijection between $\tau$-exceptional sequences and certain \textit{ordered} $\tau$-rigid modules.

\begin{definition}
    Let $M = M_1 \oplus M_2 \oplus \dots \oplus M_t$ be a module with $M_i$ indecomposable for all $i$. Given a module $M$ along with a given ordered decomposition, we call $M$ an \textit{ordered} module.

    Let $M = M_1 \oplus M_2 \oplus \dots \oplus M_t$ be an ordered $\tau$-rigid module. We say that $M$ is \textit{TF-ordered} if $M_i \notin \Gen(\bigoplus_{j>i} M_j)$.
    
\end{definition}

Note that a $\tau$-rigid module $M$ may be ordered in $|M|!$ distinct ways. By abuse of notation, we use the direct sum notation as usual with the caveat that we care about the order of the direct summands. We denote by $\tfo{\Lambda}$ the set of TF-ordered $\tau$-rigid $\Lambda$-modules.

\begin{example}
    Let $\Lambda$ be a rank $2$ algebra with indecomposable projective modules $P(1)$ and $P(2)$. Then $P(1) \oplus P(2)$ and $P(2) \oplus P(1)$ are two distinct TF-ordered $\tau$-rigid modules.
\end{example}

For a $\tau$-rigid module $M$, we denote by $f_M(X) = X/t_M(X)$ the co-kernel of the trace of $M$ in $X$. Given a TF-ordered $\tau$-rigid module $M \oplus N$, we then have that $f_N(M)$ is $\tau$-rigid in $J(N)$.

\begin{remark}
    For a $\tau$-rigid module $M$, we have that $(\Gen M,M^\perp)$ is a torsion pair, inducing for any module $X$ a short exact sequence $t_M(X) \xhookrightarrow{} X \twoheadrightarrow f_M(X)$. This illustrates one of many connections between $\tau$-tilting theory and torsion theory, and we refer the reader to \cite{tau} and \cite{dirrt} for more information.
\end{remark}

\begin{proposition}\cite[Prop. 5.6]{tauexceptional_buanmarsh}\label{prop:f_m-bijection-taurigid-J(M)-rigid}
    Let $M$ be a $\tau$-rigid module. Then $f_M$ gives a bijection between indecomposable $\tau$-rigid modules $N$ such that $N \oplus M \in \tfo{\Lambda}$, and indecomposable $\tau_{J(M)}$-rigid modules.
\end{proposition}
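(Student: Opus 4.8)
The statement is an incarnation of Jasso's $\tau$-tilting reduction phrased through traces, and the plan is to check $f_M$ is well defined on the stated domain, exhibit an explicit inverse, and then read off the statement about indecomposables. Well-definedness is almost immediate: if $N$ is indecomposable $\tau$-rigid with $N\oplus M\in\tfo{\L}$, then $N\notin\Gen M$, so in the canonical sequence $t_M(N)\hookrightarrow N\twoheadrightarrow f_M(N)$ of the torsion pair $(\Gen M,M^\perp)$ the quotient $f_M(N)$ is nonzero, and by the fact recalled just before the statement $f_M(N)$ is $\tau_{J(M)}$-rigid; indecomposability of $f_M(N)$ I defer to the end.

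For the inverse, recall that ${}^\perp\tau M$ is a torsion class containing $\Gen M$, with $J(M)=M^\perp\cap{}^\perp\tau M$ sitting inside it. Given an indecomposable $\tau_{J(M)}$-rigid module $U\in J(M)$, I would lift it by a Bongartz-type universal extension: choose $M'\in\add M$ and a short exact sequence $0\to M'\to gU\to U\to 0$ whose connecting map $\Hom_\L(M',M)\to\Ext^1_\L(U,M)$ is surjective, which (using $\Ext^1_\L(M',M)=0$) forces $\Ext^1_\L(gU,M)=0$. Since ${}^\perp\tau M$ is extension-closed, $gU\in{}^\perp\tau M$, so $\Hom_\L(gU,\tau M)=0$; combined with $\Ext^1_\L(gU,M)=0$ and the Auslander--Reiten formula, the usual Bongartz argument shows $gU\oplus M$ is $\tau$-rigid. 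As $t_M(gU)=M'\in\Gen M$ and $U\in M^\perp$, the displayed sequence is the canonical torsion sequence of $gU$, whence $f_M(gU)=U$ and $gU\notin\Gen M$, i.e.\ $gU\oplus M\in\tfo{\L}$.

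It then remains to see that $g$ and $f_M$ are mutually inverse and that $g$ is independent of the chosen surjection. The identity $f_M\circ g=\id$ was just observed. For $g\circ f_M=\id$, given $N$ in the domain with $U:=f_M(N)$, one shows that $\tau$-rigidity of $N\oplus M$ forces the connecting map $\Hom_\L(t_M(N),M)\to\Ext^1_\L(U,M)$ of $N$'s canonical torsion sequence to be surjective, so that this sequence is, up to pushout, the universal one, giving $N\cong gU$. Finally $N$ is indecomposable iff $U$ is: Jasso's equivalence $J(M)\cong\mods\Gamma$ identifies $\tau_{J(M)}$-rigid modules with $\tau$-rigid $\Gamma$-modules, and the reduction bijection preserves the number of indecomposable summands, so indecomposables correspond to indecomposables. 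This yields the asserted bijection.

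The main obstacle is the inverse direction: verifying that the universal extension $gU$ really is $\tau$-rigid together with $M$ (the Bongartz-lemma step, run through the Auslander--Reiten formula) and that a $\tau$-rigid lift of $U$ is unique, hence equal to $gU$. A clean alternative is to invoke Jasso's $\tau$-tilting reduction outright---it already furnishes a bijection between $\tau$-rigid $\Gamma$-modules and $\tau$-rigid $\L$-modules $N$ with $N\oplus M$ $\tau$-rigid and $N\notin\Gen M$---and then to verify only that, on objects, this bijection is computed by the trace, i.e.\ coincides with $f_M$.
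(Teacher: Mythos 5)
The first thing to say is that the paper does not prove this proposition: it is quoted verbatim from \cite[Prop.~5.6]{tauexceptional_buanmarsh}, whose own proof rests on Jasso's $\tau$-tilting reduction. So there is no in-paper argument to compare against, and your fallback suggestion --- invoke Jasso's reduction for the bijection on $\tau$-rigid objects and then check that on objects it is computed by the trace quotient $f_M$ --- is in substance exactly what the cited source does; that route is sound and complete.

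Your primary route, however, has a genuine gap at precisely the step you flag as ``the main obstacle,'' and it is worth being concrete about why it does not close as written. Two issues. First, an indecomposable $\tau_{J(M)}$-rigid module $U$ need not be $\tau_\L$-rigid: the paper itself only gets the implication $\Hom(M,\tau N)=0\Rightarrow\Hom(M,\tau_{J(X)}N)=0$ for free and has to work hard in \cref{lem:tau_rigid_in_J(x)_iff_tau_rigid} to prove a converse for $C_n$ specifically. So ``the usual Bongartz argument'' cannot simply be started from $U$; you must convert the relative rigidity $\Hom(U,\tau_{J(M)}U)=0$ into the absolute statements $\Hom_\L(gU,\tau\, gU)=0$ and $\Hom_\L(M,\tau\, gU)=0$, and that conversion is the actual content of Jasso's theorem rather than a routine verification. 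Second, even the step ``$\Ext^1_\L(gU,M)=0$ plus the Auslander--Reiten formula gives $\Hom_\L(M,\tau\, gU)=0$'' is too quick: by the Auslander--Smal{\o} criterion the paper records in \cref{lem:ext_characterization_of_maps_to_tau}, $\Hom_\L(M,\tau\, gU)=0$ is equivalent to $\Ext^1_\L(gU,\Gen M)=0$, i.e.\ vanishing against \emph{all} quotients of $\add M$, which does not follow from $\Ext^1_\L(gU,M)=0$ over a non-hereditary algebra (the AR formula only kills the stable $\Hom$). The surjectivity claim for the connecting map in the $g\circ f_M=\id$ direction and the transfer of indecomposability are likewise consequences of the reduction theorem rather than independent checks. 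In short: either carry out the full reduction argument, which is a theorem's worth of work, or cite it outright as the paper does; the intermediate version in your main paragraph does not stand on its own.
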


It follows that for $M \oplus N \in \tfo{\Lambda}$ with $M \text{ and }N$ indecomposable then $(f_N(M),N)$ is a $\tau$-exceptional sequence. Indeed we have the following stronger result linking TF-ordered $\tau$-rigid modules and $\tau$-exceptional sequences.

\begin{theorem}\cite[Thm. 5.1]{mendozatreffinger_stratifyingsystems}\cite[Thm. 5.4]{tauexceptional_buanmarsh}\label{thm:MendozaTreffinger}
    There is a bijection $\Psi\colon \tfo{\Lambda} \to \tauexc{\Lambda}$ defined recursively by \[\Psi(M_1 \oplus M_2 \oplus \dots \oplus M_t) = (\Psi_{J(M_t)}(f_{M_t}(M_1),f_{M_t}(M_2),\dots,f_{M_t}(M_{t-1})),M_t)\]
    
    where we get rid of unnecessary parenthesis by identifying $((A),B) = (A,B)$.
\end{theorem}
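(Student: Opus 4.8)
\emph{Proof proposal.} The plan is to argue by induction on the length $t$ (equivalently, on the rank $n$ of the ambient algebra): the category $J(M_t)\cong\mods\Gamma$ occurring in the recursive formula is the module category of an algebra $\Gamma$ of rank $n-1<n$, so $\Psi_{J(M_t)}$ is an instance of the statement over a strictly smaller algebra, covered by the inductive hypothesis. The base cases $t\le 1$ are immediate: the zero module is matched with the empty sequence, and for $t=1$ the TF-condition is vacuous, so the length-$1$ members of $\tfo\Lambda$ are exactly the indecomposable $\tau$-rigid modules, which $\Psi$ matches identically with the one-term $\tau$-exceptional sequences. Note also that the recursion is length-preserving, so a well-defined $\Psi$ sends length-$t$ modules to length-$t$ sequences.

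For the inductive step I would first verify that $\Psi$ is well defined and takes values in $\tauexc\Lambda$. Given $M=M_1\oplus\dots\oplus M_t\in\tfo\Lambda$, the summand $M_t$ is $\tau$-rigid indecomposable, and since $\Gen(M_t)\subseteq\Gen(\bigoplus_{j>i}M_j)$ the TF-condition gives $M_i\notin\Gen(M_t)$, hence $M_i\oplus M_t\in\tfo\Lambda$, for each $i<t$; by \cref{prop:f_m-bijection-taurigid-J(M)-rigid} each $f_{M_t}(M_i)$ is then indecomposable $\tau_{J(M_t)}$-rigid. The substantive point is to upgrade this to the whole module $f_{M_t}(M_1\oplus\dots\oplus M_{t-1})=\bigoplus_{i<t}f_{M_t}(M_i)$, showing it is $\tau_{J(M_t)}$-rigid and TF-ordered in $J(M_t)$. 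For the TF-part I would use the compatibility $f_{M_t}(X)\in\Gen_{J(M_t)}(f_{M_t}(Y))$ if and only if $X\in\Gen_\Lambda(Y\oplus M_t)$ (which holds because $\Gen$ is closed under quotients and extensions and $t_{M_t}(X)\in\Gen M_t$, while $f_{M_t}(X)\in J(M_t)$ always): an inclusion $f_{M_t}(M_i)\in\Gen_{J(M_t)}(\bigoplus_{j>i}f_{M_t}(M_j))$ would force $M_i\in\Gen_\Lambda(\bigoplus_{j>i}M_j\oplus M_t)$, contradicting the TF-ordering of $M$. For $\tau_{J(M_t)}$-rigidity of the whole direct sum I would invoke the module-level form of \cref{prop:f_m-bijection-taurigid-J(M)-rigid} furnished by Jasso's reduction \cite{jassoreduction}. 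Granting this, the inductive hypothesis in $J(M_t)$ makes $\Psi_{J(M_t)}(f_{M_t}(M_1),\dots,f_{M_t}(M_{t-1}))$ a $\tau$-exceptional sequence of length $t-1$ in $J(M_t)$, so $\Psi(M)$ satisfies both clauses of the definition of a $\tau$-exceptional sequence.

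Injectivity and surjectivity then follow formally from the inductive hypothesis together with \cref{prop:f_m-bijection-taurigid-J(M)-rigid}. If $\Psi(M)=\Psi(M')$, comparing last coordinates gives $M_t=M'_t$ (so the lengths coincide), and injectivity of $\Psi_{J(M_t)}$ forces $f_{M_t}(M_i)\cong f_{M_t}(M'_i)$ for all $i<t$; as $M_i$ and $M'_i$ both lie in the domain of the bijection of \cref{prop:f_m-bijection-taurigid-J(M)-rigid}, we get $M_i\cong M'_i$ and hence $M\cong M'$ as ordered modules. For surjectivity, given $(A_1,\dots,A_t)\in\tauexc\Lambda$, the module $A_t$ is $\tau$-rigid indecomposable and $(A_1,\dots,A_{t-1})\in\tauexc{J(A_t)}$, so by the inductive hypothesis it equals $\Psi_{J(A_t)}(B_1\oplus\dots\oplus B_{t-1})$ for some $B\in\tfo{J(A_t)}$; lifting each $B_i$ through \cref{prop:f_m-bijection-taurigid-J(M)-rigid} produces indecomposables $M_i$ with $M_i\oplus A_t\in\tfo\Lambda$ and $f_{A_t}(M_i)=B_i$, and using the same $\Gen$-compatibility one checks that $M:=M_1\oplus\dots\oplus M_{t-1}\oplus A_t$ is $\tau$-rigid and TF-ordered over $\Lambda$; by construction $\Psi(M)=(A_1,\dots,A_t)$.

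The step I expect to be the main obstacle is precisely the "several summands at once" enhancement of \cref{prop:f_m-bijection-taurigid-J(M)-rigid}: that $f_{M_t}$ restricts to a bijection between TF-ordered $\tau$-rigid $\Lambda$-modules with final summand $M_t$ and TF-ordered $\tau_{J(M_t)}$-rigid modules, which requires knowing that $f_{M_t}$ preserves $\tau$-rigidity of arbitrary (not merely indecomposable) direct sums and is compatible with $\Gen$ as above. Everything else is bookkeeping with the recursive definitions. This package is exactly what is supplied by Jasso's reduction \cite{jassoreduction} together with \cite[Thm. 5.1]{mendozatreffinger_stratifyingsystems} and \cite[Thm. 5.4]{tauexceptional_buanmarsh}.
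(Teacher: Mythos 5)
The paper does not prove this statement at all: it is imported verbatim from \cite[Theorem 5.1]{mendozatreffinger_stratifyingsystems} and \cite[Thm. 5.4]{tauexceptional_buanmarsh}, so there is no in-paper argument to compare against. Your sketch follows the strategy of those sources (induction on $t$ through the Jasso reduction $J(M_t)\cong\mods\Gamma$ with $|\Gamma|=n-1$), and the reductions you perform are sound: the $\Gen$-compatibility you assert does hold, with the backward direction resting on $\Gen(Y\oplus M_t)$ being extension-closed (a torsion class, since $Y\oplus M_t$ is $\tau$-rigid) applied to the canonical sequence $t_{M_t}(X)\hookrightarrow X\twoheadrightarrow f_{M_t}(X)$, exactly as you indicate. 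You also correctly isolate the one genuinely non-formal ingredient --- that $f_{M_t}$ preserves $\tau$-rigidity of whole direct sums, i.e.\ the module-level version of \cref{prop:f_m-bijection-taurigid-J(M)-rigid} --- and this is indeed what Jasso's reduction theorem supplies; your proposal does not prove it, but neither does the paper, and flagging it as the load-bearing external input is the right call. The only cosmetic quibble is the parenthetical claim that induction on $t$ is ``equivalently'' induction on the rank $n$; they are not equivalent since $t\le n$ in general, but either induction works because the statement is quantified over all algebras, so this does not affect correctness.
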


Notice that the rightmost coordinate of a $\tau$-exceptional sequence is the same module as the rightmost summand of the corresponding TF-ordered $\tau$-rigid module. In particular, for a $\tau$-exceptional sequence $(B,C)$ we have $\Psi^{-1}(B,C) = f^{-1}_C(B) \oplus C$.

\subsection{Mutation of $\tau$-exceptional sequences}
Buan, Marsh and Hanson define in \cite{BHM2024} a \textit{mutation} of $\tau$-exceptional sequences, generalizing the mutation of exceptional sequences as defined by Crawley-Boevey in \cite{cbw92}. In this paper, we restrict to considering the mutation for \textit{$\tau$-tilting finite algebras}; algebras with a finite number of $\tau$-rigid modules up to isomorphism. Let now $\Lambda$ be a $\tau$-tilting finite algebra and $(B,C)$ a $\tau$-exceptional sequence. 

\begin{definition}\cite[Def. 3.11a]{BHM2024}
    We say that a $\tau$-exceptional sequence $(B,C)$ is left regular if $C \in \proj \Lambda$ or $C \notin \add \bcomp(f^{-1}_C(B))$.
\end{definition}

\begin{definition}\cite[Def.-Prop. 4.3]{BHM2024}\label{def:regular_mutation}
    Let $(B,C)$ be a left regular $\tau$-exceptional sequence. Let \[B' = \begin{cases}
        B & \text{if $C$ is projective} \\
        f^{-1}_C(B) & \text{else}
    \end{cases}.\] Then we define the left mutation of $(B,C)$ as the $\tau$-exceptional sequence \[\varphi(B,C) = (X,B')\] where $X$ is the unique module such that $(X,B')$ is a $\tau$-exceptional sequence and $J(B,C) = J(X,B')$.
\end{definition}

\begin{remark}
    The module $X$ in the above definition is unique by \cref{cor:tau_uniqueness_jasso_preserve} if it exists. In \cite[Def.-Prop. 4.3]{BHM2024}, an explicit formula for $X$ is given.
\end{remark}

For $\tau$-tilting finite algebras, $\tau$-exceptional sequences $(B,C)$ which are not left regular still always have a well-defined mutation $\varphi(B,C)$. We do not here give the general formula for irregular mutation, and instead refer the reader to see \cite[Sec. 3.3]{BHM2024} and \cite[Def. 3.23]{bkt} for details on mutation in the irregular case.

Mutation of $\tau$-exceptional sequences of length $>2$ is defined in two steps. First, note that since $J(B,C) = J(\varphi(B,C))$ for a $\tau$-exceptional sequence, if $(A,B,C)$ is a $\tau$-exceptional sequence then so is $(A,\varphi(B,C))$. Secondly, note that for a $\tau$-exceptional sequence $(A,B,C)$, the sequence $(A,B)$ is $\tau$-exceptional in $J(C)$. Let now $A = (A_1,A_2,\dots,A_t)$ be an arbitrary $\tau$-exceptional sequence. Following \cite[Def. 5.2]{BHM2024}, we set \[\varphi_i(A) = (A_1,A_2,\dots,A_{i-1},\varphi^{J(A_{i+2},A_{i+3},\dots,A_t)}(A_i,A_{i+1}),A_{i+2},\dots,A_t)\] for $1 \leq i \leq t-1$. The superscript on $\varphi$ indicates which subcategory the computation is done in.

\subsection{Mutation of $\tau$-exceptional sequences over Nakayama algebras}
Let $T = B \oplus C$ be a TF-ordered $\tau$-rigid module over a Nakayama algebra $\Lambda$. Then $\Psi(T)$ gives a $\tau$-exceptional sequence which may be mutated, and in \cite{bkt} explicit formulas for the TF-ordered $\tau$-rigid module $T'$ such that $\Psi(T') = \varphi(\Psi(T))$ are given. Let $\overline{\varphi}(T) = \Psi^{-1}(\varphi(\Psi(T)))$. 

To compute $\overline{\varphi}(B \oplus C)$ for Nakayama algebras, we split into different cases as follows.

\vspace{2ex}
\noindent\begin{minipage}{\textwidth}
\begin{multicols}{2} 
\begin{enumerate}[leftmargin=2cm]
    \item[(TF-1)] $C \in \proj \L$;
    \begin{enumerate}
        \item[(TF-1a)] and $\Hom(C,B) = 0$;
        \item[(TF-1b)] and $\Hom(C,B) \neq 0$;
    \end{enumerate}
    \bigskip
    \item[(TF-3)] $C \not \in \add (\bcomp(B) \oplus \ccomp(B))$;
\columnbreak
    \item[(TF-2)] $C \in \add\ccomp(B)$;
    \begin{enumerate}
        \item[(TF-2a)] and $B \not \in \proj \L$;
        \item[(TF-2b)] and $B \in \proj \L$;
    \end{enumerate}
    \bigskip
    \item[(TF-4)] $C \in \add \bcomp(B) \setminus \proj \L$.
\end{enumerate}
\end{multicols}
\end{minipage}
\vspace{2ex}

TF-4 captures the irregular case.

\begin{theorem}\cite[Thm. 1.3]{bkt}\label{thm:bkt_tf_formula}
    Let $\Lambda$ be a Nakayama algebra and let $B \oplus C$ be a TF-ordered $\tau$-rigid module. Then the left mutation of TF-ordered modules is given by
    \[ \overline{\varphi}(B \oplus C) = \begin{cases}
        C \oplus B & \text{ in Case TF-1a and Case TF-3,}\\
        B \oplus f_C B & \text{ in Case TF-1b and Case TF-4,}\\
        \rad^k B \oplus B & \text{ in Case TF-2a,} \\ 
        P(\rad^k B) \oplus B & \text{ in Case TF-2b,}\\
    \end{cases}\]
    where $C \cong B/\rad^k B$ for some $k \in \{1, \dots, \ell(B)\}$ in Case TF-2 and where $P(\rad^k B) \to \rad^k B$ is the projective cover of $\rad^k B$.
\end{theorem}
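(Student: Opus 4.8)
The plan is to transport everything through the bijection $\Psi$ between TF-ordered $\tau$-rigid modules and $\tau$-exceptional sequences, together with its inverse $\Psi^{-1}(B,C) = f_C^{-1}(B) \oplus C$. Since $\Psi(B \oplus C) = (f_C B, C)$, one must in each of the four cases (i) read off the pair $(f_C B, C)$; (ii) compute $\varphi(f_C B, C) = (X, B')$, using \cref{def:regular_mutation} in the regular cases and the irregular formula of \cite[Sec. 3.3]{BHM2024} (see also \cite[Def. 3.23]{bkt}) in the irregular one; and (iii) recognise $\Psi^{-1}(X, B') = f_{B'}^{-1}(X) \oplus B'$ as the claimed module. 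What makes (ii)--(iii) tractable is that in the regular cases $X$ is characterised, via \cref{cor:tau_uniqueness_jasso_preserve}, as the unique module with $(X, B')$ a $\tau$-exceptional sequence and $J(X, B') = J(B, C)$; here one invokes the standard fact that the iterated $\tau$-perpendicular category of a $\tau$-exceptional sequence of the form $\Psi(M_1 \oplus \cdots \oplus M_t)$ equals the $\tau$-perpendicular category $J(M_1 \oplus \cdots \oplus M_t)$ of the underlying module, which, being defined symmetrically, does not depend on the order of the summands. Thus in each regular case it suffices to write down the candidate TF-ordered $\tau$-rigid module $T'$ of the theorem, verify that $\Psi(T')$ is a $\tau$-exceptional sequence whose last coordinate is the prescribed $B'$ and whose $\tau$-perpendicular category equals $J(B \oplus C)$, and conclude $\overline{\varphi}(B \oplus C) = T'$.

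First I would settle the bookkeeping. The four cases are exhaustive and, with TF-1 taking precedence, pairwise disjoint: for $C \notin \proj \L$ no indecomposable is a summand of both $\bcomp(B)$ and $\ccomp(B)$, since a summand of $\ccomp(B)$ lies in $\Gen B$ while a summand of $\bcomp(B)$, by its defining property applied with $N = 0$, does not. Applying the left-regularity condition of \cite{BHM2024} to $(f_C B, C)$---namely $C \in \proj \L$ or $C \notin \add \bcomp(f_C^{-1}(f_C B)) = \add \bcomp(B)$---shows that TF-1, TF-2, TF-3 are exactly the left regular cases and TF-4 (which is $C \in \add \bcomp(B) \setminus \proj \L$) the irregular one. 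In the regular non-projective cases this also gives $B' = f_C^{-1}(f_C B) = B$, and in Case TF-1 ($C$ projective) it gives $B' = f_C B$.

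Then I would run through the cases using Nakayama representation theory: indecomposables are uniserial, $\tau$ is explicit, $\Gen B$ is exactly the set of quotients of $B$, and both $t_C(B)$ and $f_C B = B/t_C(B)$ are easy to read off. Case TF-1a is the base case: $\Hom(C,B) = 0$ forces $t_C(B) = 0$, so $\Psi(B \oplus C) = (B, C)$ and $B' = B$; as $C$ is projective and $C \not\cong B$, it is not a quotient of $B$, so $C \oplus B$ is TF-ordered, $\Psi(C \oplus B) = (f_B C, B)$ is a $\tau$-exceptional sequence with $J(C \oplus B) = J(B \oplus C)$ by order-independence, and uniqueness gives $X = f_B C$, hence $\overline{\varphi}(B \oplus C) = f_B^{-1}(f_B C) \oplus B = C \oplus B$. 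Case TF-3 goes through identically once one checks $C \oplus B$ is again TF-ordered $\tau$-rigid. In Case TF-1b, $t_C(B) \neq 0$ so $f_C B$ is a proper quotient of $B$, and one verifies $B \oplus f_C B$ is TF-ordered $\tau$-rigid with $\Psi(B \oplus f_C B) = (f_{f_C B} B, f_C B)$ and the correct $\tau$-perpendicular category, forcing $\overline{\varphi}(B \oplus C) = B \oplus f_C B$. Case TF-2 brings in the co-Bongartz complement: over a Nakayama algebra the indecomposable summands of $\ccomp(B)$ are proper quotients of $B$, so $C \cong B/\rad^k B$ for some $k$ and $t_C(B) = \rad^k B$; one then checks that $\rad^k B \oplus B$ is TF-ordered $\tau$-rigid with the right $\tau$-perpendicular category when $B \notin \proj \L$, whereas when $B \in \proj \L$ the submodule $\rad^k B$ is no longer $\tau$-rigid over $\L$ and must be replaced by its projective cover $P(\rad^k B)$, giving $P(\rad^k B) \oplus B$. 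Finally, in the irregular Case TF-4 one feeds $(f_C B, C)$ into the irregular mutation formula of \cite{BHM2024} (equivalently \cite{bkt}) and checks---again by identifying the output through its last coordinate and its $\tau$-perpendicular category---that it returns $B \oplus f_C B$.

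The main obstacle, I expect, is not any individual case but the package of structural facts about Nakayama algebras on which the translation rests: precise descriptions of $\bcomp(B)$ and $\ccomp(B)$ for an indecomposable uniserial $\tau$-rigid $B$; the behaviour of the trace $t_C(-)$ and of $f_C(-)$ on uniserial modules; the identification $J(\Psi(M)) = J(M)$; and---in Cases TF-2b and TF-4---the subtlety that the naive candidate ($\rad^k B$, respectively the raw output of the regular formula) fails to be $\tau$-rigid over $\L$ and must be corrected. Establishing the perpendicular-category equality $J(T') = J(B \oplus C)$ in each case, which is the linchpin of the uniqueness argument, is where I anticipate most of the work.
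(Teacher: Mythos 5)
First, a point of order: the paper does not prove this statement --- it is imported verbatim from \cite[Theorem 1.3]{bkt} --- so there is no in-paper proof to compare yours against, and I can only judge the proposal on its own terms. As a strategy it is sensible and very likely close in spirit to the actual argument in \cite{bkt}: transport everything through $\Psi$, observe that $\Psi(B\oplus C)=(f_CB,C)$ is left regular exactly outside Case TF-4, and pin down the regular mutation via \cref{cor:tau_uniqueness_jasso_preserve} by exhibiting a candidate $T'$ with the prescribed last coordinate and with $J(T')=J(B\oplus C)$. Your bookkeeping (disjointness of $\add\bcomp(B)$ and $\add\ccomp(B)$, identification of TF-4 as the unique irregular case, $B'=B$ in the regular non-projective cases and $B'=f_CB$ when $C$ is projective) is correct, and Case TF-1a is essentially complete as you describe it.

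Beyond the verifications you explicitly defer, however, there are concrete problems. (i) In Case TF-2 you assert $t_C(B)=\rad^kB$; this confuses the trace of $C$ in $B$ with the kernel of $B\twoheadrightarrow C$. If $t_C(B)$ were $\rad^kB$ then $f_CB=B/\rad^kB=C$ and $\Psi(B\oplus C)=(C,C)$, which is impossible since $f_C(B)$ must lie in $J(C)\subseteq C^\perp$. The correct statement (over $C_n$, and in general whenever $\topp B$ occurs once in $B$) is $t_C(B)=0$: any nonzero map $C\to B$ would have image a submodule of $B$ with top $\topp B$, forcing it to be epi, which is impossible as $\ell(C)<\ell(B)$. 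This vanishing is precisely what gives $f_CB=B$ and hence $B'=B$ in this case. (ii) Your explanation of TF-2b is off: over $C_n$ the module $\rad^kB$ \emph{is} $\tau$-rigid (every indecomposable is, by \cref{prop:nakayama_cn_properties}(1)); what fails when $B$ is projective is $\tau$-rigidity of the pair, since the socle of $\tau(\rad^kB)$ is then $\topp B$ and so $\Hom(B,\tau\rad^kB)\neq0$. (iii) The irregular Case TF-4 cannot be closed by ``identifying the output through its last coordinate and its $\tau$-perpendicular category'': that uniqueness device presupposes you already know the last coordinate of the output, which in the irregular case is $f_CB$ and is produced only by the irregular mutation formula itself --- a formula you neither state nor compute with. (iv) Case TF-3 is not ``identical'' to TF-1a until you prove $\Hom(C,B)=0$ there (cf.\ \cref{prop:nakayama_cn_properties}(5)), which in TF-3 is a nontrivial structural fact rather than a hypothesis. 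Together with the undischarged equalities $J(T')=J(B\oplus C)$ in Cases TF-1b, TF-2 and TF-4, these mean the proposal is a plausible roadmap but not yet a proof.
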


\section{Mutation of $\tau$-exceptional sequences as group actions}

Let $\Lambda$ be an arbitrary $\tau$-tilting finite algebra and recall that $\ctauexc{\Lambda}$ denotes the set of complete $\tau$-exceptional sequences over $\Lambda$. Since $\Lambda$ is assumed to be $\tau$-tilting finite, $\varphi_i(A)$ (left mutation) and $\varphi_i^{-1}(A)$ (right mutation) is well-defined for all $A \in \ctauexc{\Lambda}$ and $i \in \{1,2,\dots,n-1\}$. Letting \freegroup{n-1} be the free group with generators $\{\rho_1,\rho_2,\dots,\rho_{n-1}\}$, we have a left \freegroup{n-1} action on $\ctauexc{\Lambda}$ given by setting $\rho_i \cdot A = \varphi_i(A)$. 

\begin{definition}\label{def:braidgroup_relations}
    By the braid group relations on $n$ strands we mean the relations B1 and B2 below, where we assume $i,j \in \{1,2,\dots,n-1\}$: \begin{itemize}
         \item[\quad B1:] $\rho_i\rho_j = \rho_j\rho_i$ for $|i-j|\geq 2$.
        \item[\quad B2:] $\rho_{i}\rho_{i+1}\rho_{i} = \rho_{i+1}\rho_{i}\rho_{i+1}$ for $i\leq n-2$.
    \end{itemize}

    The braid group $B_n$ is defined as the group given by the generators $\{ \rho_1,\rho_2,\dots,\rho_{n-1} \}$ and the braid group relations B1 $\cup$ B2. 

\end{definition}

\begin{proposition}\label{prop:b1_relations_holds}
    The relations B1 are always respected by the canonical $\freegroup{n-1}$-action on $\ctauexc{\Lambda}$ in the sense that $\rho_i\rho_j\cdot A = \rho_j \rho_i \cdot A$ for any $A \in \ctauexc{\Lambda}$ and $i,j \in \{1,2,\dots,n-1\}$ where $|i-j|\geq 2$.
\end{proposition}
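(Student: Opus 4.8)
The plan is to exploit the \emph{locality} of the mutation operators. By \cref{def:regular_mutation} (and its extension to the irregular case in \cite{BHM2024}), the operator $\varphi_i$ alters only the coordinates in positions $i$ and $i+1$, the rule producing the new pair from $(A_i,A_{i+1})$ depends only on positions $i+2,\dots,n$ through the ambient $\tau$-perpendicular category $J(A_{i+2},\dots,A_n)$, and it is completely independent of positions $1,\dots,i-1$. Fix $A=(A_1,\dots,A_n)\in\ctauexc{\Lambda}$; by the symmetry of the relation we may assume $j\geq i+2$. Everything will reduce to the following claim: applying $\varphi_j$ to $A$ leaves the category $J(A_{i+2},\dots,A_n)$ unchanged.

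Granting the claim, set $(A_i',A_{i+1}')=\varphi^{J(A_{i+2},\dots,A_n)}(A_i,A_{i+1})$ and $(A_j',A_{j+1}')=\varphi^{J(A_{j+2},\dots,A_n)}(A_j,A_{j+1})$. Applying $\varphi_i$ first, $\varphi_i(A)$ has coordinates $A_1,\dots,A_{i-1},A_i',A_{i+1}',A_{i+2},\dots,A_n$; since $i+1<j$ its tail from position $j$ still reads $A_j,A_{j+1},A_{j+2},\dots,A_n$, so $\varphi_j(\varphi_i(A))$ replaces positions $j,j+1$ by $(A_j',A_{j+1}')$ and changes nothing else. Applying $\varphi_j$ first, $\varphi_j(A)$ has coordinates $A_1,\dots,A_{j-1},A_j',A_{j+1}',A_{j+2},\dots,A_n$; positions $i,i+1$ are untouched and, by the claim, the ambient category $J(A_{i+2},\dots,A_n)$ used for the mutation at positions $i,i+1$ is the same, so $\varphi_i(\varphi_j(A))$ replaces positions $i,i+1$ by $(A_i',A_{i+1}')$ and changes nothing else. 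Both composites equal the single complete $\tau$-exceptional sequence $(A_1,\dots,A_{i-1},A_i',A_{i+1}',A_{i+2},\dots,A_{j-1},A_j',A_{j+1}',A_{j+2},\dots,A_n)$, so $\rho_i\rho_j\cdot A=\rho_j\rho_i\cdot A$.

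To prove the claim I would first note that unwinding the recursive definition of a $\tau$-exceptional sequence shows that every tail $(A_m,A_{m+1},\dots,A_n)$ of a $\tau$-exceptional sequence is again a $\tau$-exceptional sequence, so $J(A_m,\dots,A_n)$ is defined for all $m$, and the same holds for the sequence $\varphi_j(A)$. Then I would prove, by downward induction on $m$ from $m=j$ to $m=i+2$, that the $J$ of the tail of $\varphi_j(A)$ starting at position $m$ equals the $J$ of the tail of $A$ starting at position $m$. The base case $m=j$ is exactly the defining property $J(B,C)=J(\varphi(B,C))$ of mutation, applied inside $J(A_{j+2},\dots,A_n)$ to the pair $(A_j,A_{j+1})$. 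For the inductive step, when $i+2\leq m<j$ the $m$-th coordinate of $\varphi_j(A)$ is still $A_m$, so by the recursive formula for $J$ of a sequence the $J$ of its tail from position $m$ equals $J_{\Ccal}(A_m)$, where $\Ccal$ is the $J$ of its tail from position $m+1$; by the induction hypothesis $\Ccal$ agrees with the corresponding category for $A$, which gives the inductive step, and the claim follows by taking $m=i+2$. The main point requiring care is the bookkeeping of ambient categories — making sure the recursive definition of $J$ of a sequence is always applied to genuine $\tau$-exceptional sequences — together with the fact that mutation preserves $J$ also in the irregular case; the latter is built into the construction of \cite{BHM2024} and so can simply be cited.
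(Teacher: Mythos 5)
Your proposal is correct and follows essentially the same route as the paper: both arguments rest on the locality of $\varphi_k$ (it changes only coordinates $k,k+1$ and depends on the tail only through $J(A_{k+2},\dots,A_n)$) together with the fact that $J(B,C)=J(\varphi(B,C))$, which propagates through the recursive definition of $J$ to show the ambient category at positions $i,i+1$ is unaffected by $\varphi_j$. You merely make explicit, via downward induction on the tail, the step the paper states as a single displayed equality of $\tau$-perpendicular categories.
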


\begin{proof}
        Without loss of generality, assume $i+1 < j$. Let $A = (A_1,A_2,\dots,A_n)$ be a complete $\tau$-exceptional sequence over $\Lambda$. To compute $\varphi_k(A)$ we need two pieces of information: The $\tau$-exceptional sequence $A_{(k,k+1)} = (A_k,A_{k+1})$ and the ambient category $J(A_{k+2},\dots,A_n)$. 
        
        We need to check that $\varphi_i(\varphi_j(A))$ agrees with $\varphi_j(\varphi_i(A))$ in coordinates $i,i+1$ and $j,j+1$ as all other coordinates will be left unchanged with respect to $A$. 

        We have \[\varphi_j(\varphi_i(A))_{(i,i+1)} = \varphi_i(A)_{(i,i+1)} = \varphi_i(\varphi_j(A))_{(i,i+1)}\] where the first equality follows from noting that $\varphi_j$ leaves coordinates $i,i+1$ unchanged, and the second equality comes from the fact that \[J(A_{i+1},\dots,A_j,A_{j+1},\dots,A_n) = J(A_{i+1},\dots,\varphi_j(A)_j,\varphi_j(A)_{j+1},\dots,A_n).\] 

        Further, we have \[\varphi_j(\varphi_i(A))_{(j,j+1)} = \varphi_j(A)_{(j,j+1)} = \varphi_i(\varphi_j(A))_{(j,j+1)}\] where the second equality follows from the fact that $\varphi_i$ leaves coordinates $j,j+1$ unchanged and the first equality follows from noting that $\varphi_i(A)_{k} = A_{k}$ for all $k \geq i+2$.  
\end{proof}

In the remainder of this section we will discuss a sufficient (but not necessary) criterion for the B2 relations to be respected by the mutation action on complete $\tau$-exceptional sequences.

\begin{definition}
    We say that a class $\mathcal{A}$ of algebras is closed under Jasso reductions if for any $\Lambda$  in $\mathcal{A}$ and $X$ any $\tau$-rigid module over $\Lambda$, we have that $J(X)$ is equivalent to $\mods \Gamma$ for some $\Gamma$ in $\mathcal{A}$. 
\end{definition}

The following lemmas will be useful.

\begin{lemma}\label{lem:closed_under_jasso_when_closed_under_indec_jasso}
    A class of algebras $\mathcal{A}$ is closed under Jasso reductions if and only if it is closed under \textit{indecomposable} Jasso reductions, that is requiring that if $X$ is $\tau$-rigid indecomposable in $\Lambda \in \mathcal{A}$, then $J(X) \cong \mods \Gamma$ for some $\Gamma \in \mathcal{A}$.
\end{lemma}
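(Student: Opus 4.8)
The forward implication is immediate: an indecomposable $\tau$-rigid module is in particular $\tau$-rigid, so closure under Jasso reductions trivially forces closure under indecomposable ones. For the converse, the plan is to induct on the number $r = |X|$ of indecomposable summands of a $\tau$-rigid module $X$ over an algebra $\Lambda \in \mathcal{A}$, establishing the statement ``$J_\Lambda(X) \cong \mods\Gamma$ for some $\Gamma \in \mathcal{A}$'' simultaneously for all members of $\mathcal{A}$. The base case $r = 0$ records that $J(0) = 0^\perp \cap {}^\perp\tau 0 = \mods\Lambda$ with $\Lambda \in \mathcal{A}$, and the case $r = 1$ is precisely the hypothesis that $\mathcal{A}$ is closed under indecomposable Jasso reductions.

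For the inductive step with $r \geq 2$, I would first fix a TF-ordering $X = X_1 \oplus \cdots \oplus X_r$, using that every $\tau$-rigid module admits one (well known; citable e.g.\ from \cite{tauexceptional_buanmarsh} or \cite{mendozatreffinger_stratifyingsystems}). Regrouping the summands, write $X = M \oplus N$ with $M = X_1 \oplus \cdots \oplus X_{r-1}$ and $N = X_r$; this ordered module is still TF-ordered, so $\bar X := f_N(M)$ is $\tau$-rigid in $J(N)$ (see \cref{prop:f_m-bijection-taurigid-J(M)-rigid} and the discussion preceding it). Since the trace, and hence $f_N$, commutes with direct sums, $\bar X = f_N(X_1) \oplus \cdots \oplus f_N(X_{r-1})$, so $|\bar X| \leq r - 1 < r$. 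Moreover $X_r$ is $\tau$-rigid indecomposable in $\mods\Lambda$, so by hypothesis there is $\Gamma_r \in \mathcal{A}$ with $J(X_r) \cong \mods\Gamma_r$.

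It then remains to identify $J_\Lambda(X)$ with a Jasso reduction taking place over $\Gamma_r$. For this I would invoke the compatibility of Jasso reduction with passing to a direct summand: for a TF-ordered $\tau$-rigid module $M \oplus N$ one has $J_\Lambda(M \oplus N) = J_{J_\Lambda(N)}(f_N(M))$; equivalently, $J$ of a TF-ordered $\tau$-rigid module coincides with $J$ of the $\tau$-exceptional sequence that $\Psi$ assigns to it, and the latter peels off the last coordinate by the recursive definition of $J$ for $\tau$-exceptional sequences. Granting this, $J_\Lambda(X) = J_{J_\Lambda(X_r)}(\bar X) \cong J_{\mods\Gamma_r}(\bar X')$, where $\bar X'$ is the $\tau$-rigid $\Gamma_r$-module corresponding to $\bar X$ under the equivalence $J(X_r) \cong \mods\Gamma_r$ and $|\bar X'| = |\bar X| \leq r - 1$. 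Applying the inductive hypothesis to $\Gamma_r \in \mathcal{A}$ and $\bar X'$ yields $J_{\mods\Gamma_r}(\bar X') \cong \mods\Gamma$ for some $\Gamma \in \mathcal{A}$, hence $J_\Lambda(X) \cong \mods\Gamma$, closing the induction.

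I expect the one genuinely non-formal step to be the compatibility identity $J_\Lambda(M \oplus N) = J_{J_\Lambda(N)}(f_N(M))$ of the last paragraph — the ``associativity'' of Jasso reduction — which needs to be located precisely in the literature (\cite{jassoreduction}, or through $\Psi$ and the recursive definition of $J$ as developed in \cite{tauexceptional_buanmarsh, BHM2024}); the existence of a TF-ordering is the only other input borrowed from outside, and the remainder is bookkeeping with orderings and counts of indecomposable summands.
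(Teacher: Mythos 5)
Your proof is correct and follows essentially the same route as the paper: induction on the number of indecomposable summands, reducing in stages via the compatibility of Jasso reduction with splitting off a summand (the identity you flag as the one non-formal step is exactly \cite[Thm.~6.3]{buan2023perpendicular}, which the paper cites for this purpose). The only cosmetic difference is that you peel off the indecomposable summand first and apply the inductive hypothesis to $f_{X_r}(M)$ inside $J(X_r)$, whereas the paper reduces by the $k$-summand part first and then applies the indecomposable-closure hypothesis inside the resulting $\mods \Gamma$.
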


\begin{proof}
    Clearly being closed under Jasso reductions imply being closed under indecomposable Jasso reductions.
    
    Assume that $\mathcal{A}$ is closed under indecomposable Jasso reductions. Let $X$ be a $\tau$-rigid $\Lambda$-module for some $\Lambda \in \mathcal{A}$. We show by induction on $k = |X|$ that $J(X)$ is equivalent to a category $\mods \Gamma$ with $\Gamma \in \mathcal{A}$. For $k = 1$, we have by assumption that $J(X) \cong \mods \Gamma $ for some $\Gamma \in \mathcal{A}$. Assume now that for any algebra $\Lambda \in \mathcal{A}$ and any $\tau$-rigid $\Lambda$-module $X$ with $|X| = k$, we have $J(X) \cong \mods \Gamma$ for some $\Gamma$ in $\mathcal{A}$. Let $X \oplus Y$ be any $\tau$-rigid $\Lambda$-module with $k+1$ summands such that $Y$ is indecomposable. By \cite[Thm. 6.3]{buan2023perpendicular}, we have $J(X \oplus Y) = J_{J(X)}(Y')$ for some $Y'$ in $J(X)$. By our induction hypothesis, $J(X) \cong \mods \Gamma $ for $\Gamma  \in \mathcal{A}$, so by assumption $J_\Gamma (Y') \cong \mods \Delta$ for some $\Delta \in \mathcal{A}$. This finishes the proof. 
\end{proof}

\begin{lemma}\label{lem:disconnect_mutation_swapping}
    Let $(B,C)$ be a $\tau$-exceptional sequence in a disconnected module category of which $B$ and $C$ belong to different components. Then $\varphi(B,C) = (C,B)$.
\end{lemma}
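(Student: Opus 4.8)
The plan is to show that the pair $(C,B)$ is already a $\tau$-exceptional sequence satisfying the defining property of $\varphi(B,C)$, namely that $J(B,C) = J(C,B)$, and that moreover the mutation is governed by the (regular) formula that produces exactly this swap. Since the ambient category $\mods\Lambda'$ (say) is disconnected, write $\mods\Lambda' \simeq \mods\Lambda_1 \times \mods\Lambda_2$ with $B$ supported on the first factor and $C$ on the second. The first step is to observe that $\Hom(C,B) = 0$ and $\Ext^1(C,B) = 0$ (indeed all Homs and Exts between the two components vanish), so in particular $\tau$ of a module in one component lives in the same component, and hence $\Hom(C,\tau B) = 0$; this means $C \oplus B$ is $\tau$-rigid and, since $B \notin \Gen C$ (again because they live in different components and $B \neq 0$), the ordered module $C \oplus B$ is TF-ordered. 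Thus $(C,B) = \Psi(C\oplus B)$ is a genuine $\tau$-exceptional sequence, with $B$ in the rightmost coordinate.

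The second step is to identify which case of the mutation formula we are in. Because $\Hom(C,B) = 0$, the trace $t_C(B) = 0$, so $f_C(B) = B$; similarly $t_B(C) = 0$ and $f_B(C) = C$. Now we must check whether $(B,C)$ is left regular, i.e.\ whether $C \in \proj\Lambda'$ or $C \notin \add\bcomp(f_C^{-1}(B))$. Here $f_C^{-1}(B) = B$ (using $f_C(B) = B$, as computed from the torsion pair in $J(C)$, where $B$ is already the relevant module since $\Hom(C,B)=0$ places $B$ inside $C^\perp \cap {}^\perp\tau C = J(C)$). The Bongartz complement $\bcomp(B)$ is computed inside the component containing $B$ together with possible summands from the other component; since $C$ lies in a different component than $B$ and is $\tau$-rigid, $C$ is generated by no module except those containing $C$ as a summand, so one checks $C \in \add\bcomp(B)$ — which would land us in case TF-4, the irregular case. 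I expect this to be the main obstacle: carefully determining that $(B,C)$ falls under TF-4 (or alternatively TF-1a, if $C$ happens to be projective) rather than, say, TF-3, and that the corresponding formula from \cref{thm:bkt_tf_formula} (or the analogous general formula from \cite{BHM2024}) outputs precisely $(C,B)$.

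An alternative, cleaner route — which I would actually pursue to sidestep the case analysis — is purely to use the uniqueness characterization: by \cref{cor:tau_uniqueness_jasso_preserve} it suffices to show that $(C,B)$ is a $\tau$-exceptional sequence with $J(C,B) = J(B,C)$ and that $B$ is the correct rightmost coordinate of $\varphi(B,C)$ (which it is, since in every case of the mutation the new rightmost coordinate is $B' = f_C^{-1}(B) = B$ when $C$ is non-projective, and $B' = B$ when $C$ is projective). For $J(C,B) = J(B,C)$: since everything is additive over the product decomposition, $J(B,C) = J_{J(C)}(B)$, and $J(C)$ restricted to the first component is all of $\mods\Lambda_1$ (as $C$ kills nothing there), so $J_{J(C)}(B) = J_{\mods\Lambda_1}(B) \times (\text{the reduction of } \mods\Lambda_2 \text{ by } C)$; symmetrically $J(C,B) = J_{J(B)}(C)$ decomposes the same way, giving equality. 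Then $\varphi(B,C) = (X,B')$ with $X$ the unique module making $(X,B)$ $\tau$-exceptional with $J(X,B) = J(B,C)$; since $(C,B)$ is such a sequence, uniqueness forces $X = C$, so $\varphi(B,C) = (C,B)$ as claimed. The only genuine work is verifying the three bulleted facts (that $(C,B)$ is $\tau$-exceptional, that the Jasso categories agree, and that $B' = B$), each of which reduces to the observation that Homs, Exts, and $\tau$ all respect the product decomposition.
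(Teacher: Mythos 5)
Your second, ``cleaner'' route is essentially the paper's own proof: show $f_C^{-1}(B)=B$, show $J(B,C)=J(C,B)$, and invoke the uniqueness in \cref{def:regular_mutation} to force $X=C$. But there is a genuine gap, and it sits exactly at the point you flagged as ``the main obstacle'' and then sidestepped: \cref{def:regular_mutation} defines $\varphi(B,C)$ as ``the unique $(X,B')$ with $J(X,B')=J(B,C)$'' only when $(B,C)$ is \emph{left regular}, i.e.\ when $C\in\proj\Lambda$ or $C\notin\add\bcomp(f_C^{-1}(B))=\add\bcomp(B)$. Your preferred route never verifies this hypothesis, and your first route asserts the opposite: you claim that ``$C$ is generated by no module except those containing $C$ as a summand, so $C\in\add\bcomp(B)$,'' which would place $(B,C)$ in the irregular case, where the uniqueness characterization is not how the mutation is defined. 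That assertion is false: take $N$ to be the projective cover of $C$; then $B\oplus N$ is $\tau$-rigid (the two components are Hom- and Ext-orthogonal and $N$ is projective) and generates $C$, so by the paper's definition of the Bongartz complement an indecomposable summand of $\bcomp(B)$ lying in the component of $C$ must itself be projective. This is precisely the characterization of Bongartz complements over disconnected module categories from \cite[Lem.~4.12]{bkt} that the paper invokes, and it is what establishes left regularity (either $C$ is projective, or $C\notin\add\bcomp(B)$). Once that step is inserted, the remainder of your second route --- orthogonality of the components giving $f_C^{-1}(B)=B$ and the $\tau$-exceptionality of $(C,B)$, the product decomposition giving $J(B,C)=J(C,B)$, and uniqueness giving $X=C$ --- coincides with the paper's argument.
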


\begin{proof}
    Note that $B \oplus C = \psi^{-1}(B,C)$ and $C \oplus B = \psi^{-1}(C,B)$, so $f_C^{-1}(B) = B$, and $(B,C)$ is left regular because if $C$ is in the Bongartz complement of $B$ then $C$ must be projective by the characterization of Bongartz complements in disconnected module categories given in \cite[Lem. 4.12]{bkt}. Since \[J(B,C) = J(B \oplus C) = J(C \oplus B) = J(C,B)\] where the first and last equalities follow from \cite[Thm. 6.4]{buan2023perpendicular}, we can by \cref{def:regular_mutation} conclude that $\varphi(B,C) = (C,B)$.
\end{proof}

Inspired by the proof strategy of Crawley-Boevey in \cite{cbw92}, where it is demonstrated that the braid group relations are satisfied for mutation of exceptional sequences over hereditary algebras, we now give sufficient criteria for the braid group relations being satisfied for mutation of $\tau$-exceptional sequences over algebras in a belonging to a class closed under Jasso reductions.

\begin{proposition}\label{prop:properties_mutation_braid_group}

    Consider the two criteria below for an algebra $\Lambda$.

    \begin{enumerate}
        \item[M1:] For any $\tau$-exceptional sequence $(B,C)$ we have that $\varphi(B,C) = (?,B)$.
        \item[M2:] For any indecomposable $\tau$-rigid module $X$ over $\Lambda$ and any $(B,C)$ which is $\tau$-exceptional both in $\mods \Lambda$ and in $J(X)$, we have $\varphi^{J(X)}(B,C) = \varphi(B,C)$.
    \end{enumerate}

    Let now $\mathcal{A}$ be a class of algebras closed under Jasso reductions such that criteria M1 and M2 above hold for any algebra in $\mathcal{A}$. Then for any algebra in this class, the mutation of complete $\tau$-exceptional sequences respects the braid group relations.
\end{proposition}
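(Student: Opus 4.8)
The plan is to imitate the standard argument (as in Crawley-Boevey's proof for hereditary algebras) that reduces the braid relations to local statements about mutation in rank two and rank three. By \cref{prop:b1_relations_holds} the B1 relations always hold, so the only thing to prove is B2: that $\varphi_i\varphi_{i+1}\varphi_i(A) = \varphi_{i+1}\varphi_i\varphi_{i+1}(A)$ for every complete $\tau$-exceptional sequence $A$ and every $i \leq n-2$. Since each of $\varphi_i, \varphi_{i+1}$ only touches coordinates $i, i+1, i+2$ of $A$, and since the mutation in positions $(j,j+1)$ of a $\tau$-exceptional sequence $(A_1,\dots,A_n)$ is computed as $\varphi^{J(A_{j+2},\dots,A_n)}$ applied to the pair $(A_j,A_{j+1})$ — with $J(A_{i+3},\dots,A_n)$ a fixed ambient category throughout this computation by the class being closed under Jasso reductions — it suffices to prove the identity for $n = 3$ and $i = 1$. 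In other words, I would reduce to showing: for any algebra $\Lambda$ in $\mathcal{A}$ and any complete $\tau$-exceptional sequence $(A_1, A_2, A_3)$ of length three, $\varphi_1\varphi_2\varphi_1(A_1,A_2,A_3) = \varphi_2\varphi_1\varphi_2(A_1,A_2,A_3)$. The reduction uses that both $\varphi^{J(X)}$ and $\varphi$ agree on pairs that are $\tau$-exceptional in both categories (criterion M2), so the ambient-category bookkeeping when passing to $J(A_4,\dots,A_n)$ is harmless.

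Next I would unwind what $\varphi_1$ and $\varphi_2$ do to a length-three sequence. Write $A = (A_1, A_2, A_3)$. Then $\varphi_2(A) = (A_1, \varphi(A_2, A_3))$ where $\varphi(A_2,A_3)$ is computed in $\mods\Lambda$, while $\varphi_1(A) = (\varphi^{J(A_3)}(A_1, A_2), A_3)$, computed in $J(A_3)$. The key structural input is criterion M1: $\varphi(B,C)$ always has the form $(?, B)$, i.e. left mutation moves the left-hand module of a pair into the right-hand slot. Applying M1 repeatedly, one computes that after three mutations $\varphi_1\varphi_2\varphi_1(A)$ has, in its various coordinates, modules built from $A_1, A_2, A_3$ in a controlled way — and symmetrically for $\varphi_2\varphi_1\varphi_2(A)$. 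Crucially, the third ($=$ rightmost) coordinate of $\varphi_1\varphi_2\varphi_1(A)$ will be forced to equal the third coordinate of $\varphi_2\varphi_1\varphi_2(A)$: both are obtained by the same chain of "move-left-module-right" operations, and one checks that the rightmost slot ends up holding the same module (tracking through M1, this should be $A_1$ after $\varphi_1$, then unchanged by $\varphi_2$ which only affects the last two slots of a triple, etc. — I would chase the indices carefully). Then I invoke the uniqueness principle: a complete $\tau$-exceptional sequence is determined by any $n-1$ of its coordinates (\cref{thm:tau_exceptional_missing_one}), and more specifically, $X$ in a $\tau$-exceptional sequence $(X, C)$ is determined by $C$ together with $J(X,C)$ (\cref{cor:tau_uniqueness_jasso_preserve}). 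Since mutation preserves the final $\tau$-perpendicular category, $J(\varphi_1\varphi_2\varphi_1(A)) = J(A) = J(\varphi_2\varphi_1\varphi_2(A))$; combined with the matching rightmost (or first $n-1$) coordinates, uniqueness finishes the proof.

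I expect the main obstacle to be the second paragraph's bookkeeping: carefully verifying that, after expanding $\varphi_1\varphi_2\varphi_1$ and $\varphi_2\varphi_1\varphi_2$ using M1 and M2, enough coordinates of the two resulting sequences literally coincide as modules to apply the uniqueness theorem. Concretely, one must (a) confirm that the ambient categories in which the inner rank-two mutations are performed match up on the two sides — this is where "closed under Jasso reductions" and M2 are used, to move freely between $\mods\Lambda$, $J(A_3)$, and iterated $\tau$-perpendicular categories; and (b) show that M1 propagates correctly through the nested mutations so that the "$?$" slots never need to be computed — only their positions matter, and the one fully-determined coordinate plus the preserved $J(-)$ pins down the rest. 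A secondary subtlety is handling the irregular case of mutation (when $(B,C)$ is not left regular), but M1 is stated for arbitrary $\tau$-exceptional sequences, so as long as we only use M1 and M2 as black boxes the regular/irregular distinction does not enter. If the coordinate-chase in (b) is cleaner than expected, one might even get that two coordinates match outright and skip \cref{cor:tau_uniqueness_jasso_preserve} in favor of the more elementary \cref{thm:tau_exceptional_missing_one}.
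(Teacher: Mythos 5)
Your proposal follows essentially the same route as the paper: reduce to a length-three window via closure under Jasso reductions, chase coordinates through $\varphi_1\varphi_2\varphi_1$ and $\varphi_2\varphi_1\varphi_2$ using M1, invoke M2 to make the middle coordinates of the two resulting triples agree, and finish with \cref{cor:tau_uniqueness_jasso_preserve}. The only thing left implicit is the explicit six-step computation showing the last \emph{two} coordinates (not just the rightmost) coincide, which is exactly the bookkeeping you correctly flag as the remaining work.
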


\begin{proof}
    Let $(A_1,A_2,\dots,A_n)$ be any complete $\tau$-exceptional sequence in $\ctauexc{\Lambda}$ for some $\Lambda$ in $\mathcal{A}$. We can assume that $t\geq 3$.

    Let $1 \leq i \leq t-2$ and consider the sequence $(A_i,A_{i+1},A_{i+2})$ in $J(A_{i+3},\dots,A_n)$. Since $\mathcal{A}$ is closed under Jasso reductions, we can consider $(A_i,A_{i+1},A_{i+2})$ to be a $\tau$-exceptional sequence in $\mods \Gamma$ for an algebra $\Gamma$ in $\mathcal{A}$. 

    We compute \begin{align}
        \varphi_1\circ \varphi_2 \circ \varphi_1(A_i,A_{i+1},A_{i+2}) &= \varphi_1 \circ \varphi_2(A^\star_{i+1},A_i,A_{i+2}) & \text{(By M1 in $J_\Gamma(A_{i+2})$)} \label{eq:line1_braid_relation}\\
        &= \varphi_1(A^\star_{i+1},A^\star_{i+2},A_i) & \text{(By M1 in $\mods \Gamma$)}\\
        &= (A^{\star\star}_{i+2},A^\star_{i+1},A_i) & \text{(By M1 in $J_\Gamma(A_{i})$)} \\ \notag \\
        \varphi_2\circ \varphi_1 \circ \varphi_2(A_i,A_{i+1},A_{i+2}) &= \varphi_2 \circ \varphi_1(A_{i},A^\dagger_{i+2},A_{i+1}) & \text{(By M1 in $\mods \Gamma$)}\\
        &= \varphi_2(A^{\dagger\dagger}_{i+2},A_{i},A_{i+1}) & \text{(By M1 in $J_\Gamma(A_{i+1})$)}\\
        &= (A^{\dagger\dagger}_{i+2},A^\star_{i+1},A_i) & \text{(By M2 and \cref{eq:line1_braid_relation})}
    \end{align}

    Lastly by \cref{cor:tau_uniqueness_jasso_preserve} applied to $(A^{\dagger\dagger}_{i+2},A^\star_{i+1})$ and $(A^{\star\star}_{i+2},A^\star_{i+1})$ in $J(A_i)$ we see that $A^{\dagger\dagger}_{i+2} = A^{\star\star}_{i+2}$. We thus have that \[\varphi_i\circ \varphi_{i+1} \circ \varphi_i(A_1,A_2,\dots,A_n) = \varphi_{i+1} \circ \varphi_i \circ \varphi_{i+1}(A_1,A_2,\dots,A_n),\] demonstrating that mutation on $\ctauexc{\Lambda}$ respects the B2 relations in \cref{def:braidgroup_relations}. Since the mutation always respects the B1 relations by \cref{prop:b1_relations_holds}, the braid group relations on $n$ strands are satisfied.
\end{proof}

Given that the properties discussed above are inspired by Crawley-Bovey's work on exceptional sequences, the following proposition is no surprise.

\begin{proposition}\label{prop:hereditary_properties_mutation_braid_group}
    Properties M1 and M2 in \cref{prop:properties_mutation_braid_group} hold for hereditary algebras.
\end{proposition}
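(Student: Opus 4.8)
The approach is to reduce both statements to the classical theory of exceptional sequences over hereditary algebras, via the translation between the two frameworks. Recall that over a hereditary $\Lambda$ the indecomposable $\tau$-rigid modules are exactly the exceptional (rigid) indecomposables; that for such a module $X$ the category $J(X) = X^\perp \cap {}^\perp\tau X$ is the right perpendicular subcategory $\{Y \in \mods\Lambda : \Hom_\Lambda(X,Y) = 0 = \Ext^1_\Lambda(X,Y)\}$, which is a wide subcategory equivalent to $\mods\Gamma$ for a hereditary $\Gamma$ of rank $n-1$ by standard results on perpendicular categories (see \cite{cbw92}, \cite{ringel_exceptional}); and, crucially, that under the resulting identifications the $\tau$-exceptional sequences over $\Lambda$ are precisely the classical exceptional sequences \cite{tauexceptional_buanmarsh} and the mutation $\varphi$ of \cref{def:regular_mutation} coincides with the classical left mutation of Crawley-Boevey \cite{BHM2024}. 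In particular hereditary algebras are closed under Jasso reductions, so once M1 and M2 are verified, \cref{lem:closed_under_jasso_when_closed_under_indec_jasso} together with \cref{prop:properties_mutation_braid_group} recovers the braid group action of \cite{cbw92}, \cite{ringel_exceptional}. I now check M1 and M2 against this dictionary.

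For M1: classical left mutation of an exceptional pair $(B,C)$ has the form $(L_BC,\,B)$, where $L_BC$ is the left mutation of $C$ along $B$; that is, the left-hand module $B$ is transported unchanged into the right-hand slot while $C$ is modified. Hence $\varphi(B,C) = (L_BC, B) = (?,B)$, which is M1 (note this argument makes the projective/non-projective split of \cref{def:regular_mutation} irrelevant). For M2: let $X$ be $\tau$-rigid indecomposable and let $(B,C)$ be $\tau$-exceptional in both $\mods\Lambda$ and $J(X)$. The inclusion $J(X)\hookrightarrow\mods\Lambda$ is fully faithful, exact, and has extension-closed image, so $\Hom$ and $\Ext^1$ between objects of $J(X)$ are computed identically inside $J(X)$ and inside $\mods\Lambda$; since the classical left mutation of $(B,C)$ is built only from $B$, $C$ and $\Ext^\bullet(B,C)$ (one concrete model: $L_BC$ is a shift of the cone of the evaluation map $\mathrm{RHom}(B,C)\otimes_k B \to C$, formed inside the subcategory generated by $B$ and $C$), it is insensitive to the choice of ambient category. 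Therefore $\varphi^{J(X)}(B,C) = \varphi(B,C)$. One may equally argue via \cref{cor:tau_uniqueness_jasso_preserve}: by M1 both sides have right-hand entry $B$, and $J^{J(X)}(B,C) = J^{\mods\Lambda}(B,C)$ because a perpendicular category formed inside a wide subcategory agrees with the corresponding perpendicular category in the ambient category, whence the left-hand entries agree too.

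The step requiring the most care — and the only genuine obstacle — is the passage through the translation in the first paragraph. Classically, left mutation is a derived-categorical operation whose output is a \emph{signed} exceptional sequence: $L_BC$ need not literally be a module, whereas the $\tau$-exceptional formalism used here keeps everything at module level via a fixed sign convention. One must therefore invoke the coincidence of \cite{BHM2024} in the sharp form "$\varphi$ on $\tau$-exceptional sequences equals classical left mutation", so that the shape "$(?,B)$" in M1 and the ambient-independence used for M2 are legitimate at the level of $\tau$-exceptional sequences and not merely in the derived category. With that dictionary secured, M1 and M2 follow immediately from the behaviour of classical mutation and of perpendicular subcategories of hereditary module categories.
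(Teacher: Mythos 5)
Your proposal is correct and follows essentially the same route as the paper: reduce to classical exceptional sequences via the comparison theorem of \cite{BHM2024}, obtain M1 from Crawley-Boevey's form of left mutation $(B,C)\mapsto(L_BC,B)$, and obtain M2 from the fact that classical mutation of the pair $(B,C)$ depends only on data ($\Hom$ and $\Ext$ between $B$ and $C$) that is unchanged under the fully faithful, extension-closed embedding $J(X)\hookrightarrow\mods\Lambda$ --- which is just a spelled-out version of the paper's remark that classical mutation is ``computed pair-wise''.
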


\begin{proof}
    By \cite[Thm. 6.1]{BHM2024} mutation of $\tau$-exceptional sequences coincide with mutation of exceptional sequences for hereditary algebras. 

    Further, by the left mutation property in \cite[Lem. 6]{cbw92}, property M1 in \cref{prop:properties_mutation_braid_group} hold for all hereditary algebras.

    Since mutation of exceptional sequences for hereditary algebras is computed pair-wise, property M2 in \cref{prop:properties_mutation_braid_group} holds automatically for hereditary algebras. To be precise, let $(B,C,X)$ and $(B,C)$ be $\tau$-exceptional sequences over an hereditary algebra. Thus they are both exceptional sequences. By the definition of mutation of exceptional sequences for hereditary algebras, see \cite[Lem. 6]{cbw92} and \cite[Sec. 1.1]{BRT2011}, we have $\varphi_1(B,C,X) = (C',B,X)$ where by definition $(C',B) = \varphi(B,C)$. Thus $\varphi^{J(X)}(B,C) = (C',B) = \varphi(B,C)$ as wanted.
\end{proof}

\section{Braid group relations for Nakayama algebras of the form $C_n$}\label{sec:cycliccase}

Consider the quivers below.
\begin{equation*} \label{eq:Nakayamaquiver}
\begin{tikzcd}[column sep=5mm, row sep=3mm]
    &&&&&&&& 2 \arrow[ld] & 3 \arrow[l] \\
    \overrightarrow{\A}_n: &n-1 \arrow[r] & n-2 \arrow[r] &\dots \arrow[r] & 1 \arrow[r] &0, & \overrightarrow{\Delta}_n: &1 \arrow[rd] & && \vdots \arrow[lu]\\
    &&&&&&&& 0 \arrow[r] & n-1 \arrow[ru]
\end{tikzcd}\end{equation*} Let $A_n = k\overrightarrow{\A}_n$ and $C_n = k\overrightarrow{\Delta}_n/r^n$. These algebras are both Nakayama algebras and $A_n$ plus $C_1 = A_1$ are also hereditary.

\begin{definitionproposition}
    
   Let $\mathcal{N}$ be the set of algebras of the form \[A_{a_1} \times \dots \times A_{a_i} \times C_{c_1} \times \dots \times C_{c_j}\] for integers $ i,j \geq 0$ and integers $a_k,c_{k'} \geq 1$ for $1 \leq k \leq i$ and $1 \leq k' \leq j$. Then $\mathcal{N}$ is closed under Jasso reductions.

\end{definitionproposition}

\begin{proof}
    Let $\Lambda = \Gamma \times \Delta$ be a disconnected algebra, and let $X$ be indecomposable $\tau$-rigid over $\Lambda$. Then $X$ is either naturally a $\Gamma$ or a $\Delta$-module. Assume without loss of generality that $X$ is naturally a $\Gamma$-module. Then $J(X) \cong J_{\mods \Gamma}(X) \times \mods \Delta$.

    By the above and \cref{lem:closed_under_jasso_when_closed_under_indec_jasso} it is thus sufficient to show that for a connected algebra $\Lambda$ in $\mathcal{N}$ and an indecomposable $\tau$-rigid module $X$ over $\Lambda$, we have $J(X) \cong \mods \Lambda'$ for some $\Lambda' \in \mathcal{N}$. Assume first that $\Lambda = A_n$. Let $X$ be indecomposable $\tau$-rigid. By \cite[Lem. 2]{MustafaObaid2013} we have $J(X) \cong \mods(A_{i} \times A_{n-i-1})$ for some $i \in \{0,1,2,\dots,n-1\}$. Assume now that $\Lambda = C_n$ and let $X$ be indecomposable $\tau$-rigid over $\Lambda$. Then by \cite[Prop 6.5]{Msapato2022}, $J(X) \cong \mods( A_i \times C_j)$ for $i = \ell(X)-1$ and $j = n-i-1$. This finishes the proof.
\end{proof}

It is well-known that any indecomposable module over a Nakayama algebra is uniquely determined by its top and its length $l$. We let $M^t_l$ denote the module with top $S(t \text{ mod } n)$ and length $l \leq n$ over the algebra $C_n$, where $S(i)$ denotes the simple module with projective cover $P(i)$ for $0 \leq i < n$. To simplify notation, we denote by $S(i)$ and $P(i)$ the modules $S(i \text{ mod } n)$ and $P(i\text{ mod }  n)$ when working over $C_n$.

\begin{proposition}\label{prop:nakayama_cn_properties}
    
    \begin{enumerate}
        \item Let $\Lambda \in \mathcal{N}$. Any indecomposable $\Lambda$-module is $\tau$-rigid.
        \item Let $\Lambda \in \mathcal{N}$ and let $X$ be indecomposable $\Lambda$-module generated by $M$ for some not necessarily indecomposable module $M$. Then there is an indecomposable summand of $M$ generating $X$. Dually, if $X$ is cogenerated by $N$, there is an indecomposable summand of $N$ that also cogenerates $X$.
        \item For a non-projective module $M^t_l$, we have $\tau M^t_l = M^{t-1}_l$ and $\rad^i (\tau M^t_l)= \tau(\rad^i M^t_l)$.
        \item Let $M$ be a non-projective indecomposable $C_n$-module. Let $N$ be a direct summand of the Bongartz complement of $M$. Then $\Hom_{C_n}(M,N) = 0$.
        \item Let $B \oplus C$ be a TF-ordered $\tau$-rigid $C_n$-module, so that $C$ is not a direct summand of the Bongartz complement of $B$. Then $\Hom_{C_n}(C,B) = 0$.
    \end{enumerate}
\end{proposition}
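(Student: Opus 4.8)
The plan is to reduce the general statements (1) and (2) to the connected factors $A_n$ or $C_n$, dispose of the hereditary $A_n$ case, and otherwise compute directly with the uniserial indecomposables $M^t_l$ over $C_n$, using that over a Nakayama algebra every indecomposable is uniserial --- so its submodules, resp.\ its quotients, form a chain --- and that the indecomposables over $C_n$ are exactly the $M^t_l$ with $1\le l\le n$, the projectives being the $M^t_n=P(t)$. For (3) I would take the minimal projective presentation $P(t-l)\to P(t)\to M^t_l\to 0$ of a non-projective $M^t_l$ and apply the Nakayama functor (note $C_n$ is self-injective with $\nu P(i)=P(i-1)$), obtaining $\tau M^t_l=M^{t-1}_l$; the identity $\rad^i(\tau M^t_l)=\tau(\rad^i M^t_l)$ then reads $M^{t-1-i}_{l-i}=\tau M^{t-i}_{l-i}=M^{t-i-1}_{l-i}$ and is immediate (or one may cite the standard AR-translate formula for Nakayama algebras). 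Part (1) follows: projectives have $\tau=0$, and for non-projective $M^t_l$ a nonzero map $M^t_l\to\tau M^t_l=M^{t-1}_l$ would have image a common term of the quotient-chain $\{M^t_j\}_{j\le l}$ of $M^t_l$ and the submodule-chain $\{M^{t-1-l+j}_j\}_{j\le l}$ of $M^{t-1}_l$, forcing $j\equiv l+1\pmod n$ with $1\le j\le l<n$, which is impossible; and the $A_n$-factors are hereditary of Dynkin type $A$ with linear orientation, where every indecomposable is well known to be rigid. For (2): a product only sees the factor over which the indecomposable $X$ lives, so we may take $\Lambda$ connected and $X$ uniserial; given an epimorphism $\bigoplus_i M_i^{a_i}\twoheadrightarrow X$, the images of the maps $M_i\to X$ are submodules of the uniserial $X$, hence totally ordered, so their sum $X$ equals the largest one, which gives an indecomposable summand of $M$ generating $X$; dually, factoring an embedding $X\hookrightarrow\bigoplus_j N_j^{b_j}$ through the projections, the kernels form a chain in $X$ with intersection $0$, so one of them is $0$.

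The substance is in (4) and (5), both of which rest on an explicit description of the Bongartz complement over $C_n$: for non-projective $M^t_l$ I would prove
\[\bcomp(M^t_l)=\bigoplus_{i=0}^{n-l-1}P(t+i)\ \oplus\ \bigoplus_{j=1}^{l-1}M^{t-j}_{l-j}\]
(and $\bcomp(P(t))=\bigoplus_{i\ne t}P(i)$, since $\tau P(t)=0$). As $\bcomp(M^t_l)\oplus M^t_l$ is the $\tau$-tilting module whose torsion class is ${}^\perp\tau M^t_l={}^\perp M^{t-1}_l$, it suffices to verify that $M^t_l$ together with the summands on the right-hand side above forms a $\tau$-rigid module with $n$ pairwise non-isomorphic indecomposable summands --- hence a $\tau$-tilting module --- and that this module generates the torsion class ${}^\perp M^{t-1}_l$; both are finite checks with the $M^t_l$'s and the combinatorics of residues mod $n$ (alternatively, one can invoke the analysis of Bongartz complements over Nakayama algebras in \cite{bkt}). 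Granting the formula, (4) is a short computation: no summand $N$ of $\bcomp(M^t_l)$ has a submodule isomorphic to a nonzero quotient $M^t_s$, $1\le s\le l$, of $M^t_l$ --- for $N=P(t+i)$ such a submodule would be forced to have length at least $l+1$, and for $N=M^{t-j}_{l-j}$ the top $S(t)$ cannot occur low enough in $N$ --- so $\Hom(M^t_l,N)=0$.

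For (5) I argue by contradiction. Suppose $\Hom(C,B)\ne0$ and let $L$ be its image, a quotient of $C$ and a submodule of $B$. If $B$ is non-projective, then $\tau$-rigidity of $B\oplus C$ gives $\Hom(C,\tau B)=0$ with $\tau B\ne0$; comparing this with $\Hom(C,B)\ne0$ and tracking the relevant residues modulo $n$ forces $\ell(L)=\min(\ell(B),\ell(C))$ (the two residues in question differ by one, while the interval of admissible image-lengths has size $<n$). If $\ell(L)=\ell(B)$ then $C\twoheadrightarrow B$, contradicting that $B\oplus C$ is TF-ordered (so $B\notin\Gen C$); if $\ell(L)=\ell(C)<\ell(B)$ then $C$ embeds in $B$ as the submodule of length $\ell(C)$, which is exactly the $j=\ell(B)-\ell(C)$ summand of $\bcomp(B)$ in the formula, contradicting $C\notin\add\bcomp(B)$. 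If instead $B=P(\beta)$ is projective, then $\bcomp(B)=\bigoplus_{i\ne\beta}P(i)$, so the hypothesis and basicness force $C$ to be non-projective, whence $\tau C\ne0$ and $\Hom(B,\tau C)=0$; combined with $\Hom(C,B)\ne0$ the same computation yields a contradiction.

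I expect the main obstacle to be verifying this Bongartz-complement formula --- the $\tau$-rigidity and the generation claim for the candidate $\tau$-tilting module --- since this is where the combinatorics of residues mod $n$ is densest; the remaining parts are either classical (the $A_n$ factors), a one-line structural observation (uniserial chains, for (2)), or a mechanical consequence of the AR-translate formula.
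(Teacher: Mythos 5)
Your proposal is correct, and for parts (2) and (4) it essentially coincides with the paper's proof: the paper likewise reduces to an indecomposable summand via uniseriality of the images, and deduces (4) from the explicit formula $\bcomp(M^t_l)=\bigoplus_{i=1}^{l-1}\rad^i M^t_l\oplus\bigoplus_{i=0}^{n-l-1}P(t+i)$ by a composition-factor/socle check --- except that the paper simply cites this formula from \cite[Prop.~4.9]{bkt}, whereas you sketch a verification but sensibly offer the same citation as a fallback (this is the one place where your ``finite checks'' would need to be carried out if you insisted on self-containment). Elsewhere you compute where the paper cites: for (1) the paper invokes \cite[Prop.~2.5]{Adachi2016}, while your direct matching of the quotient chain of $M^t_l$ against the submodule chain of $M^{t-1}_l$ (forcing $j\equiv l+1 \pmod n$ with $1\le j\le l<n$) is a valid replacement; for (3) the paper cites the standard AR-translate formula for Nakayama algebras rather than pushing the projective presentation through the Nakayama functor. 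The genuine divergence is in (5). The paper first notes $C$ cannot be projective, then splits on $B$: for projective $B$ it observes that $B$ must be a summand of $\bcomp(C)$ (any projective compatible with a $\tau$-rigid module lies in its Bongartz complement) and concludes by applying (4) to $C$; for non-projective $B$ it cites \cite[Lem.~5.4]{bkt} that every map $C\to B$ is epi or mono and rules out both options. Your residue argument deriving $\ell(L)=\min(\ell(B),\ell(C))$ from $\Hom(C,\tau B)=0$ is precisely a self-contained proof of that epi-or-mono dichotomy, and your separate computation with $\Hom(B,\tau C)=0$ in the projective case checks out as well; both routes buy the same conclusion, yours being more elementary and the paper's being shorter by leaning on \cite{bkt}. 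I see no gap.
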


\begin{proof}
    \begin{enumerate}
        \item This is well known and follows e.g from \cite[Prop. 2.5]{Adachi2016}, noting that the indecomposable modules of maximal length are projectives, so also $\tau$-rigid.
        \item Assume that $M$ decomposes $M = M_1 \oplus M_2$ for $M_1 \neq 0 \neq M_2$. Let $M_1 \oplus M_2 \xrightarrow[]{f = (f_1 \; f_2)} X$ be an epimorphism. Then $\im f = \im f_1 + \im f_2$. By uniseriality, $\im f_1 \subseteq \im f_2$ or $\im f_2 \subseteq \im f_1$. Assume without loss of generality that $\im f_2 \subseteq \im f_1$. Then $M_1$ generates $X$. If $M_1$ decomposes, we can continue this process until we identify an indecomposable summand of $M$ generating $M$. Since $\mathcal{N}$ is closed under taking opposite algebras, we immediately get the dual result.
        \item This formula for $\tau$ over Nakayama algebras is well known, see e.g \cite[Thm. V.4.1]{assem_skowronski_simson_2006}. The second claim follows from noting that $\rad M^t_l = M^{t-1}_{l-1}$.
        \item Assume that $M$ has top $S(t)$ and length $l$. By \cite[Prop. 4.9]{bkt} the Bongartz complement of $M$ is given by \[\bigoplus_{i = 1}^{l-1} \rad^i M \oplus \bigoplus_{i = 0}^{n-l-1} P(t+i).\]

        Assume first that $N$ is non-projective. Then $N = \rad^i M$ for some $i\geq 1$. Thus the top of $M$ does not appear as a composition factor of $N$, and $\Hom_{C_n}(M,N) = 0$.

        Assume now that $N$ is projective. We have a map from $M$ to $P(i)$ exactly when $\soc(P(i)) = S(i+1)$ is a composition factor of $M$. The composition factors of $M$ are $S(t),S(t-1),\dots, S(t-l-1)$. The projectives with the aforementioned modules as socle are the projective $P(t-1),P(t-2),\dots,P(t-l-2)$, which are exactly the projectives not appearing as summands of the Bongartz complement of $M$. Thus $\Hom(M,N) = 0$.
        \item Note first that $C$ cannot be projective, as otherwise it would be in the Bongartz complement of $B$. If $B$ is projective, then $B$ is a summand of the Bongartz complement of $C$ and (4) gives that $\Hom_{C_n}(C,B) = 0$ as wanted. If $B$ is not projective, then by \cite[Lem. 5.4]{bkt} every map $f\colon C \to B$ is epi or mono. There can not be any epi map from $C$ to $B$ as $B \oplus C$ is TF-ordered. If there is a mono map from $C$ to $B$ then we have $C \cong \rad^iB$ implying that $C$ would be in the Bongartz complement of $B$, which we by assumption cannot have. Therefore, $\Hom_{C_n}(C,B) = 0$.
    \end{enumerate}
\end{proof}

We are now ready to work with mutation of $\tau$-exceptional sequences over algebras of the form $C_n$. We will show that criteria M1 and M2 in \cref{prop:properties_mutation_braid_group} hold for all algebras of the form $C_n$. Following the work by Crawley-Boevey \cite{cbw92} and Ringel \cite{ringel_exceptional} on mutation of exceptional sequences for hereditary algebras we will then be able to conclude that these two criteria hold for all algebras in $\mathcal{N}$, and that therefore the braid group relations hold for mutation of $\tau$-exceptional sequences for algebras in $\mathcal{N}$ and in particular for algebras of the form $C_n$.

In the rest of this section, modules are assumed to be $C_n$-modules unless it is explicitly mentioned that a more general result is given, and $X$ always denotes an indecomposable module.

\begin{lemma}\label{lem:mutationCprojectiveorleftirregular}
    Let $(B,C)$ be a $\tau$-exceptional sequence such that $C$ is projective, or $(B,C)$ is (left) irregular. Then $\varphi(B,C) = (?,B)$.
\end{lemma}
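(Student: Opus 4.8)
The plan is to split along the two alternatives in the hypothesis and, in each, reduce to something already recorded. For the case $C \in \proj C_n$ there is essentially nothing to prove: such a sequence is automatically left regular (by \cite[Def. 3.11a]{BHM2024}, since the condition $C \in \proj\Lambda$ is built in), so \cref{def:regular_mutation} applies, and the auxiliary module there is $B' = B$ precisely because $C$ is projective. Hence $\varphi(B,C) = (X,B)$ for the module $X$ furnished by that definition (which exists since $C_n$ is $\tau$-tilting finite), and the conclusion $\varphi(B,C) = (?,B)$ is immediate.

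For the irregular case I would pass to TF-ordered $\tau$-rigid modules and invoke \cref{thm:bkt_tf_formula}. Write $\tilde B = f_C^{-1}(B)$, so that $T := \Psi^{-1}(B,C) = \tilde B \oplus C$ is a TF-ordered $\tau$-rigid module. The hypothesis ``$(B,C)$ is left irregular'' unwinds to ``$C \notin \proj C_n$ and $C \in \add\bcomp(f_C^{-1}(B))$'', which is exactly the statement that $\tilde B \oplus C$ falls into Case TF-4. Applying \cref{thm:bkt_tf_formula} then gives $\overline{\varphi}(T) = \tilde B \oplus f_C \tilde B$. The small observation that makes this work is that, since $(B,C)$ is a $\tau$-exceptional sequence, $B$ is $\tau_{J(C)}$-rigid indecomposable, so by the bijection $f_C$ of \cref{prop:f_m-bijection-taurigid-J(M)-rigid} we have $f_C \tilde B = f_C(f_C^{-1}(B)) = B$; thus $\overline{\varphi}(T) = \tilde B \oplus B$. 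Since $\varphi(B,C) = \Psi(\overline{\varphi}(\Psi^{-1}(B,C))) = \Psi(\tilde B \oplus B)$ and the rightmost coordinate of $\Psi$ of a TF-ordered module is its rightmost summand, we conclude $\varphi(B,C) = (?,B)$.

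I do not expect a genuine obstacle here; the lemma is really a bookkeeping consequence of \cref{thm:bkt_tf_formula}. The one place to be careful is keeping straight the two meanings of ``$B$'': in the statement of \cref{thm:bkt_tf_formula} the first summand of the input TF-ordered module is what is called $B$ there, and in our situation that first summand is $\tilde B = f_C^{-1}(B)$ rather than the first coordinate $B$ of the $\tau$-exceptional sequence. Once that substitution is made correctly, identifying ``left irregular'' with Case TF-4 (as already flagged in the text) and applying the $f_C$-bijection finishes it.
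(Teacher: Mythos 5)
Your proof is correct and follows essentially the same route as the paper's: the projective case is immediate from \cref{def:regular_mutation} (where $B'=B$), and the irregular case is read off from the TF-4 line of \cref{thm:bkt_tf_formula}. You simply spell out the bookkeeping (identifying left irregular with Case TF-4 and translating between $B$ and $f_C^{-1}(B)$) that the paper leaves implicit.
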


\begin{proof}
    If $C$ is projective, then $\varphi(B,C)$ is of the form $(?,B)$ independently of any assumptions on the underlying algebra by \cref{def:regular_mutation}. If $(B,C)$ is irregular, it follows from the formula in \cref{thm:bkt_tf_formula} that $\varphi(B,C)$ is of the form $(?,B)$.
\end{proof}

\begin{lemma} \label{prop:property_1_c_n}
    Property M1 in \cref{prop:properties_mutation_braid_group} holds for $C_n$, i.e, for any $\tau$-exceptional sequence $(B,C)$ we have $\varphi(B,C) = (?,B)$.
\end{lemma}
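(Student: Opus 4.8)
The plan is to reduce the statement to \cref{lem:mutationCprojectiveorleftirregular} together with \cref{prop:nakayama_cn_properties}(5). First I would dispose of two cases: if $C \in \proj C_n$, or if $(B,C)$ is left irregular, then \cref{lem:mutationCprojectiveorleftirregular} already gives $\varphi(B,C) = (?,B)$ and there is nothing left to prove. So I would assume for the remainder that $C \notin \proj C_n$ and that $(B,C)$ is left regular.

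In this case \cref{def:regular_mutation} gives $\varphi(B,C) = (X, f_C^{-1}(B))$, so the claim reduces to showing $f_C^{-1}(B) = B$ as $C_n$-modules. To see this, set $\tilde B := f_C^{-1}(B)$; by \cref{prop:f_m-bijection-taurigid-J(M)-rigid} this is an indecomposable $\tau$-rigid $C_n$-module with $\tilde B \oplus C \in \tfo{C_n}$ and $f_C(\tilde B) = B$. Since $C$ is not projective, the definition of a left regular $\tau$-exceptional sequence forces $C \notin \add\bcomp(\tilde B)$. Now \cref{prop:nakayama_cn_properties}(5), applied to the TF-ordered $\tau$-rigid module $\tilde B \oplus C$, yields $\Hom_{C_n}(C,\tilde B) = 0$; hence the trace $t_C(\tilde B)$ vanishes, so $f_C(\tilde B) = \tilde B / t_C(\tilde B) = \tilde B$. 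Comparing with $f_C(\tilde B) = B$, we conclude $\tilde B = B$, i.e. $f_C^{-1}(B) = B$, as required.

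Essentially all of the substance has already been packaged into \cref{lem:mutationCprojectiveorleftirregular} and \cref{prop:nakayama_cn_properties}(5), so I do not anticipate a genuine obstacle; the argument is an unwinding of \cref{def:regular_mutation} and of the definition of $f_C$. The one point to keep straight is that in the regular non-projective case the second coordinate of $\varphi(B,C)$ is a priori $f_C^{-1}(B)$ rather than $B$ itself, and it is precisely the vanishing $\Hom_{C_n}(C,\tilde B) = 0$ — which is where \cref{prop:nakayama_cn_properties}(5), and through it the length-$n$ projectives of $C_n$, enters — that identifies the two modules.
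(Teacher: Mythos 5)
Your proof is correct, and its skeleton matches the paper's: dispose of the cases where $C$ is projective or $(B,C)$ is left irregular via \cref{lem:mutationCprojectiveorleftirregular}, then show $f_C^{-1}(B)=B$ in the remaining cases. The one genuine difference is in how that identity is established. The paper splits the remaining situation into the cases TF-2 and TF-3 of \cref{thm:bkt_tf_formula}: for TF-3 it invokes \cref{prop:nakayama_cn_properties}(5) exactly as you do, but for TF-2 (where $C\in\add\ccomp(f_C^{-1}(B))$) it gives a separate ad hoc argument using tops and lengths of uniserial modules. You instead observe that left regularity together with $C\notin\proj C_n$ already gives $C\notin\add\bcomp(f_C^{-1}(B))$, which is precisely the hypothesis of \cref{prop:nakayama_cn_properties}(5), so the Hom-vanishing $\Hom_{C_n}(C,f_C^{-1}(B))=0$ follows uniformly in both cases (implicitly using that a summand of the co-Bongartz complement is never a summand of the Bongartz complement, which holds by the definitions). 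This buys a cleaner, case-free argument that bypasses the paper's TF-2 discussion entirely; the paper's version, by contrast, keeps the proof aligned with the case division of \cref{thm:bkt_tf_formula} that is reused throughout the rest of the paper. Both conclude correctly, you directly from \cref{def:regular_mutation} and the paper via the formula in \cref{thm:bkt_tf_formula}.
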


\begin{proof}
    In \cref{lem:mutationCprojectiveorleftirregular} we tackled the case where $C$ is projective or $(B,C)$ is (left) irregular, coinciding with cases TF-1 and TF-4 as in \cref{thm:bkt_tf_formula}. Assume therefore that we are in one of the two other cases, TF-2 or TF-3.

    Let $B' \oplus C$ be the TF-ordered module such that $f_C(B') = B$. We show that $B' = B$:

    \begin{itemize}
        \item If we are in case TF-2 then $C$ is generated by $B'$ and so $\text{top } C = \text{top } B$. Since $C \ncong B$ we must have $\ell(C) > \ell(B)$. Since top $C$ appears only once in the composition factor of $C$, we have $\Hom_{C_n}(B,C) = 0$.
        
        \item If we are in TF-3 then $C$ is not in the Bongartz complement of $B'$ and by \cref{prop:nakayama_cn_properties}(5) we have that $\Hom_{C_n}(C,B') = 0$ so $f_C(B') = B'$, so $B' = B$.
    \end{itemize}

    Since $B' = B$, \cref{thm:bkt_tf_formula} gives that $\varphi(B,C)$ is of the form $(?,B)$.
\end{proof}

\begin{proposition}\label{prop:property_1_n}
    Property M1 in \cref{prop:properties_mutation_braid_group} holds for any algebra $\Lambda$ in $\mathcal{N}$.
\end{proposition}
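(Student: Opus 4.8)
The plan is to reduce the computation of $\varphi(B,C)$, for a $\tau$-exceptional sequence $(B,C)$ over $\Lambda\in\mathcal N$, to a computation over a single connected component of $\Lambda$, where property M1 is already known. Write $\Lambda=\Lambda_1\times\dots\times\Lambda_m$ for the decomposition into connected components; each $\Lambda_s$ is some $A_a$ --- hence hereditary, so M1 holds for it by \cref{prop:hereditary_properties_mutation_braid_group} --- or some $C_c$, for which M1 holds by \cref{prop:property_1_c_n}. Since every algebra in $\mathcal N$ is a Nakayama algebra, \cref{thm:bkt_tf_formula} applies, and inspecting it together with \cref{def:regular_mutation} shows (as in \cref{lem:mutationCprojectiveorleftirregular}, whose proof uses nothing beyond the Nakayama property) that $\varphi(B,C)$ already has the form $(?,B)$ when $C$ is projective or $(B,C)$ is left irregular. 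We may therefore assume $(B,C)$ is left regular with $C$ non-projective, so that $\varphi(B,C)=(X,f_C^{-1}(B))$ by \cref{def:regular_mutation}; it remains to show $f_C^{-1}(B)=B$. Since $B$ and $C$ are indecomposable, each lies in a single component, and if they lie in different ones then $\varphi(B,C)=(C,B)$ by \cref{lem:disconnect_mutation_swapping}, which is of the required form.

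So suppose $B$ and $C$ both lie in one component $\Lambda_s$ and write $\Lambda=\Lambda_s\times\Gamma$. The crux is that $f_C^{-1}(B)$ is unchanged when passing between $\mods\Lambda$ and $\mods\Lambda_s$. By \cref{prop:f_m-bijection-taurigid-J(M)-rigid} the module $N:=f_C^{-1}(B)$ is indecomposable, so it is naturally a module over $\Lambda_s$ or over $\Gamma$; if it were a $\Gamma$-module then $\Hom_\Lambda(C,N)=0$ since $C$ is an $\Lambda_s$-module, whence $t_C(N)=0$ and $B=f_C(N)=N$ would be a nonzero $\Gamma$-module --- impossible, as $B$ is an $\Lambda_s$-module. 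Hence $N$ is an $\Lambda_s$-module, and the trace computation defining $f_C$ takes place entirely within $\mods\Lambda_s$, so $N$ is also $f_C^{-1}(B)$ as computed over $\Lambda_s$. The remaining locality checks are routine: $C$ is projective over $\Lambda$ if and only if it is projective over $\Lambda_s$; $\tau$-rigidity and $\tau$-exceptionality of $\Lambda_s$-modules agree in the two categories because $\Hom$-spaces and the Auslander--Reiten translate do; and left regularity is preserved because $\bcomp_\Lambda(N)=\bcomp_{\Lambda_s}(N)\oplus\Gamma$ while $C\notin\add\Gamma$.

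Consequently $(B,C)$ is a left regular $\tau$-exceptional sequence over $\Lambda_s$ with $C$ non-projective, so property M1 for $\Lambda_s$ tells us that the right-hand coordinate of $\varphi^{\mods\Lambda_s}(B,C)=(X',f_C^{-1}(B))$ is $B$; hence $f_C^{-1}(B)=B$, and $\varphi^{\mods\Lambda}(B,C)=(X,B)$ as desired. The main obstacle is precisely the locality of $f_C^{-1}$ described above; once that is in place everything else is bookkeeping about connected components of Nakayama algebras and their module categories.
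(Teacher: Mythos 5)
Your proof is correct and follows essentially the same route as the paper's: reduce to the case where $B$ and $C$ lie in the same connected component (handling the cross-component case via \cref{lem:disconnect_mutation_swapping}) and then invoke M1 for the hereditary components $A_a$ via \cref{prop:hereditary_properties_mutation_braid_group} and for the components $C_c$ via \cref{prop:property_1_c_n}. The only difference is that you spell out explicitly why $f_C^{-1}(B)$, projectivity, and left regularity are all computed locally in the component, a verification the paper leaves implicit.
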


\begin{proof}
    Let $(B,C)$ be a $\tau$-exceptional sequence over $\Lambda$. If $B$ and $C$ belong to different components of the module category of $\Lambda$, then by \cref{lem:disconnect_mutation_swapping} we have $\varphi(B,C) = (C,B)$ which is of the form $(?,B)$ as wanted.

    If $B \text{ and }C$ are in the same component of $\mods \Lambda$, then this component is the module category of an algebra of the form $A_n$ or $C_n$. For hereditary algebras, we always have \cref{prop:hereditary_properties_mutation_braid_group} that $\varphi(B,C) = (?,B)$ and by \cref{prop:property_1_c_n} the property also holds for $C_n$.
\end{proof}

We will now focus on proving that property M2 in \cref{prop:properties_mutation_braid_group} also holds for any algebra in $\mathcal{N}$. An important ingredient of our proof is that $\tau_{J(X)}$-rigidity in $J(X)$ coincides with $\tau$-rigidity in $\mods C_n$. In order to establish this key fact, we need some lemmas.

\begin{lemma}\cite[Prop. 6.4, Prop. 6.5]{Msapato2022}\label{lem:projectives_in_jasso_reduction_c_n}
    Let $X$ have length $k$ in $\mods C_n$. Then $J(X) \cong \mods A_{k-1} \times \mods C_{n-k}$. Further, the projectives living in $\mods C_{n-k}$ are exactly the projective summands of the Bongartz complement of $X$. The non-projective summands of the Bongartz complement of $X$ give the projectives living in $\mods A_{k-1}$.
\end{lemma}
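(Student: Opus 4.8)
The plan is to realise $J(X)$ as $\mods\Gamma$ for $\Gamma$ an endomorphism algebra of the Bongartz complement of $X$, to check that $f_X$ acts as the identity on the indecomposable summands of that complement, and then to decompose $\Gamma$ as a product of a linearly oriented type-$A$ algebra and a cyclic Nakayama algebra by a handful of routine $\Hom$-computations over $C_n$. Write $X=M^t_k$. The cases $k=1$ (where $X$ is simple and $\mods A_{k-1}=\mods A_0=0$) and $k=n$ (where $X$ is projective, $J(X)$ is the hereditary algebra $A_{n-1}$, and $\mods C_{n-k}=\mods C_0=0$) are degenerate and handled directly; so assume $2\le k\le n-1$.

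First recall from \cref{prop:nakayama_cn_properties}(4) (which rests on \cite[Prop.~4.9]{bkt}) that
\[
\bcomp(X)=\bigoplus_{i=1}^{k-1}\rad^iX\ \oplus\ \bigoplus_{i=0}^{n-k-1}P(t+i),
\]
so the non-projective summands of $\bcomp(X)$ are the $k-1$ uniserial modules $\rad^iX=M^{t-i}_{k-i}$ and the projective summands are the $n-k$ modules $P(t),\dots,P(t+n-k-1)$. By Jasso's reduction \cite{jassoreduction} together with \cref{prop:f_m-bijection-taurigid-J(M)-rigid} (see also \cite[Sec.~6]{buan2023perpendicular}), there is an equivalence $J(X)\cong\mods\Gamma$ under which $f_X(\bcomp(X))$ is the projective generator, so $\Gamma=\End_{J(X)}(f_X(\bcomp(X)))^{\mathrm{op}}$ and the indecomposable summands of $f_X(\bcomp(X))$ are precisely the indecomposable projective objects of $J(X)$. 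Since the trace is additive, so is $f_X$; and one checks directly that $\Hom_{C_n}(X,\rad^iX)=0$ and $\Hom_{C_n}(X,P(t+i))=0$ in the relevant ranges (both reduce to: the target module has no submodule with top $S(t)$ of length $\le k$ --- for $\rad^iX$ such a submodule would have negative length, and for $P(t+i)$ with $0\le i\le n-k-1$ it would have length $n-i>k$). Hence the trace of $X$ vanishes on $\bcomp(X)$, so $f_X(\bcomp(X))=\bcomp(X)$, and as $J(X)$ is a full subcategory we get $\Gamma=\End_{C_n}(\bcomp(X))^{\mathrm{op}}$.

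Next I would split this endomorphism algebra as a product. Two further $\Hom$-computations give $\Hom_{C_n}(\rad^iX,P(t+j))=0$ (the submodule of $P(t+j)$ with top $S(t-i)$ has length $n-i-j>k-i$, hence cannot receive a nonzero map from $\rad^iX$) and $\Hom_{C_n}(P(t+j),\rad^iX)=0$ ($S(t+j)$ is not among the composition factors $S(t-i),\dots,S(t-k+1)$ of $\rad^iX$), for all $1\le i\le k-1$, $0\le j\le n-k-1$, so
\[
\End_{C_n}(\bcomp(X))^{\mathrm{op}}\cong\End_{C_n}\!\Big(\bigoplus_{i=1}^{k-1}\rad^iX\Big)^{\mathrm{op}}\times\End_{C_n}\!\Big(\bigoplus_{i=0}^{n-k-1}P(t+i)\Big)^{\mathrm{op}}.
\]
For the first factor, the modules $\rad^iX$ form a chain $\rad^{k-1}X\subset\cdots\subset\rad X$ in which every simple occurs at most once, so every nonzero map between two of them is a scalar multiple of the inclusion of the smaller into the larger, and composites of inclusions are inclusions; thus this factor is the path algebra of the linearly oriented quiver on $k-1$ vertices, i.e.\ $\cong A_{k-1}$. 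For the second factor, $\End_{C_n}(\bigoplus_{i=0}^{n-k-1}P(t+i))^{\mathrm{op}}\cong eC_ne$ for the idempotent $e=e_t+\dots+e_{t+n-k-1}$; computing its Gabriel quiver --- the $n-k-1$ length-one arrows $t+m\to t+m-1$ inside the window, together with a single ``wrap-around'' arrow $t\to t+n-k-1$ coming from the length-$(k+1)$ path of $C_n$ through the $k$ omitted vertices $t-1,\dots,t-k$ --- shows it is an $(n-k)$-cycle, and a path around this cycle becomes zero in $eC_ne$ exactly when its $C_n$-length reaches $n$, equivalently when it traverses at least $n-k$ of the cycle arrows; hence $eC_ne\cong C_{n-k}$. (Alternatively, take $J(X)\cong\mods A_{k-1}\times\mods C_{n-k}$ as given by \cite[Prop.~6.4, 6.5]{Msapato2022} and use only that the two blocks found above have $k-1$ and $n-k$ summands, the first with hereditary endomorphism algebra, to force the matching.)

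Putting this together, $\Gamma\cong A_{k-1}\times C_{n-k}$, so $J(X)\cong\mods A_{k-1}\times\mods C_{n-k}$, and under this equivalence the indecomposable projectives of the $C_{n-k}$-factor are exactly the projective summands $P(t),\dots,P(t+n-k-1)$ of $\bcomp(X)$, while those of the $A_{k-1}$-factor are the non-projective summands $\rad X,\dots,\rad^{k-1}X$ --- which is the assertion of the lemma. The \emph{main obstacle} is the identification $eC_ne\cong C_{n-k}$: the natural grading on the $(n-k)$-cycle algebra does not match the one inherited from $C_n$ (the wrap-around arrow has $C_n$-length $k+1$, not $1$), so one must check with some care that the two admissible ideals nonetheless coincide. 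Everything else is routine bookkeeping with uniserial modules over a Nakayama algebra.
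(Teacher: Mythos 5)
The paper does not actually prove this lemma --- it is imported wholesale from \cite[Prop.~6.4, Prop.~6.5]{Msapato2022} --- so your proposal is necessarily a different route: a self-contained verification. The strategy (identify the progenerator of $J(X)$ with $f_X(\bcomp(X))$, check that the trace of $X$ vanishes on $\bcomp(X)$ so that $f_X$ is the identity there, split $\End_{C_n}(\bcomp(X))$ into two blocks by the vanishing of the cross-$\Hom$s, recognise the non-projective block as linearly oriented $A_{k-1}$ and the projective block as the idempotent subalgebra $eC_ne \cong C_{n-k}$) is sound, and I checked your four $\Hom$-computations and the relation count for $eC_ne$: a path around the $(n-k)$-cycle traversing $s$ arrows, $w$ of them wrap-around, has $C_n$-length $s+wk$, which first reaches $n$ exactly at $s=n-k$, $w=1$, so the admissible ideals do match as you claim. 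This buys an explicit, checkable argument in place of a black-box citation, at the cost of length; the paper's choice to cite Msapato is the pragmatic one.

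One boundary case is not as ``degenerate and handled directly'' as you suggest. For $k=n$, i.e.\ $X=P(t)$ projective, the formula of \cref{prop:nakayama_cn_properties}(4) does not apply (it is stated for non-projective $M$), and in fact $\bcomp(P(t))=\bigoplus_{i\neq t}P(i)$ consists entirely of projectives, since any epimorphism onto a projective of maximal length splits. Here $f_X$ is emphatically \emph{not} the identity on $\bcomp(X)$: one computes $f_{P(t)}(P(t+j))=M^{t+j}_{j}=\rad^{n-j}P(t)$, and these images (not the summands of $\bcomp(X)$ themselves) are the indecomposable projectives of $J(P(t))\cong\mods A_{n-1}$. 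So your main argument genuinely breaks at $k=n$, and indeed the lemma's literal phrasing (``the non-projective summands of the Bongartz complement give the projectives of $\mods A_{k-1}$'') only survives there if ``give'' is read as ``give via $f_X$''. You should either carry out this short extra computation for $k=n$ or note that the paper only ever invokes the projective case through the weaker statement that the cyclic component vanishes. For $1\le k\le n-1$ your argument is complete as written (the case $k=1$ needs no special treatment: the first block is simply empty).
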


In the context of the lemma above, we call $\mods A_{k-1} \subseteq J(X)$ the \textit{hereditary} component of $J(X)$ and $\mods C_{n-k} \subseteq J(X)$ the \textit{cyclic component} of $J(X)$. See \cref{fig:tau_perpendicular_in_cn} for an example.

\usetikzlibrary{calc,trees}
\newcommand{\convexpath}[2]{
[   
    create hullnodes/.code={
        \global\edef\namelist{#1}
        \foreach [count=\counter] \nodename in \namelist {
            \global\edef\numberofnodes{\counter}
            \node at (\nodename) [draw=none,name=hullnode\counter] {};
        }
        \node at (hullnode\numberofnodes) [name=hullnode0,draw=none] {};
        \pgfmathtruncatemacro\lastnumber{\numberofnodes+1}
        \node at (hullnode1) [name=hullnode\lastnumber,draw=none] {};
    },
    create hullnodes
]
($(hullnode1)!#2!-90:(hullnode0)$)
\foreach [
    evaluate=\currentnode as \previousnode using \currentnode-1,
    evaluate=\currentnode as \nextnode using \currentnode+1
    ] \currentnode in {1,...,\numberofnodes} {
-- ($(hullnode\currentnode)!#2!-90:(hullnode\previousnode)$)
  let \p1 = ($(hullnode\currentnode)!#2!-90:(hullnode\previousnode) - (hullnode\currentnode)$),
    \n1 = {atan2(\y1,\x1)},
    \p2 = ($(hullnode\currentnode)!#2!90:(hullnode\nextnode) - (hullnode\currentnode)$),
    \n2 = {atan2(\y2,\x2)},
    \n{delta} = {-Mod(\n1-\n2,360)}
  in 
    {arc [start angle=\n1, delta angle=\n{delta}, radius=#2]}
}
-- cycle
}

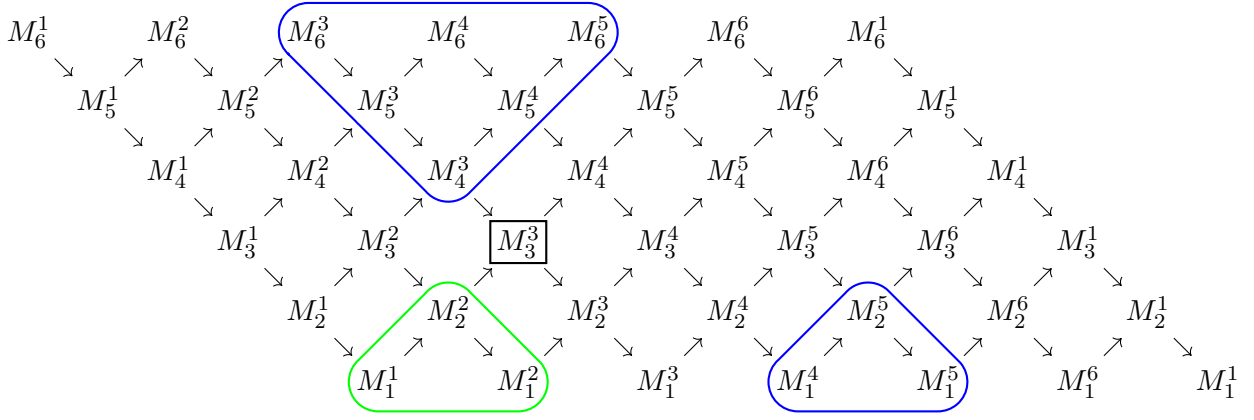
\begin{figure}
    \centering

    \begin{tikzpicture}[scale = 0.93]

    \draw[black!20!black,thick]
            (9.4,3.3) rectangle (8.6,2.7);

        \foreach \i in {1,...,6}
            \foreach \j in {1,...,7}
                \pgfmathtruncatemacro\k{mod(\j-1,6)+1}
                \node (\j\i) at (2*\j+6-\i,\i) {$M^{\k}_{\i}$};
        
        \foreach\i in {1,...,7}
            \foreach\j in {2,...,6}
                \pgfmathtruncatemacro\k{\j-1}
                \draw (\i\j) edge[->] (\i\k);

        \foreach\i in {1,...,5}
            \foreach\j in {1,...,6}
                \pgfmathtruncatemacro\k{\j+1}
                \pgfmathtruncatemacro\t{\i+1}
                \draw (\j\i) edge[->] (\k\t);

        \draw[blue,thick] \convexpath{36,46,56,34,35}{12pt};
        \draw[blue,thick] \convexpath{41,52,51}{12pt};
        \draw[green,thick] \convexpath{11,22,21}{12pt};

    \end{tikzpicture}
    
    \caption{The AR-quiver of $C_6$ with the subcategory $J(M^3_3)$ of $\mods C_6$ highlighted in blue and green. Notice that the modules with top $S(1)$ are drawn twice. The hereditary component of $J(M^3_3)$ is illustrated by the green boundary and the cyclic component is illustrated by the blue boundary.}
    \label{fig:tau_perpendicular_in_cn}
\end{figure}

\begin{lemma}\label{lem:m_notinjasso_but_generated_by_jasso_characterization}
    Let $\Lambda$ be an arbitrary algebra and $M$ an indecomposable $\Lambda$-module and $X$ a $\tau$-rigid $\Lambda$-module. If $M \notin J(X)$ is generated by some module in $J(X)$, then $\Hom_{\Lambda}(X,M) \neq 0$.
\end{lemma}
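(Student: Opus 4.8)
Recall that by definition $J(X) = X^\perp \cap {}^\perp\tau X$, so a module fails to lie in $J(X)$ precisely when it admits a nonzero map \emph{from} $X$ or a nonzero map \emph{to} $\tau X$. The plan is therefore a simple dichotomy: if $\Hom_\Lambda(X,M)\neq 0$ there is nothing to prove, so I would assume $\Hom_\Lambda(X,M)=0$, i.e.\ $M\in X^\perp$, and derive a contradiction. Since $M\notin J(X)$ while $M\in X^\perp$, it must be that $M\notin {}^\perp\tau X$, that is, there is a nonzero map $g\colon M\to\tau X$.

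\textbf{Using the generating module.} By hypothesis $M$ is generated by some $N\in J(X)$; unwinding the definition of $\Gen$, this gives an epimorphism $p\colon N^{k}\twoheadrightarrow M$ for some $k\geq 1$. Here I would note that $N^{k}$ still lies in $J(X)$, since both $X^\perp$ and ${}^\perp\tau X$ are closed under finite direct sums; in particular $\Hom_\Lambda(N^{k},\tau X)=0$. But then consider the composite $g\circ p\colon N^{k}\to\tau X$. Since $p$ is an epimorphism and $g\neq 0$, the composite $g\circ p$ is nonzero, so $\Hom_\Lambda(N^{k},\tau X)\neq 0$ --- contradicting $N^{k}\in {}^\perp\tau X$. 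Hence the assumption $\Hom_\Lambda(X,M)=0$ is untenable, and $\Hom_\Lambda(X,M)\neq 0$.

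\textbf{Main obstacle.} Honestly there is no real obstacle here: the statement is essentially an immediate consequence of the definition of the $\tau$-perpendicular category together with the elementary fact that precomposing a nonzero map with an epimorphism stays nonzero. The only point requiring a word of care is the passage through the $\add$ in the definition of $\Gen$, handled by replacing $N$ with a finite power and observing that perpendicular categories are closed under finite direct sums.
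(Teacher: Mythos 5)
Your proof is correct and follows essentially the same argument as the paper: both reduce to showing that $M$, being a quotient of a module in ${}^\perp\tau X$, still lies in ${}^\perp\tau X$, whence $M\notin J(X)$ forces $\Hom_\Lambda(X,M)\neq 0$. The only difference is presentational (you argue by contradiction and explicitly pass through $N^k$ to handle the $\add$ in the definition of $\Gen$, while the paper states the implication directly), so there is nothing to add.
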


\begin{proof}
    Let $N \twoheadrightarrow M$ be an epimorphism with $N \in J(X)$. Since $N \in J(X)$ we have $\Hom_{\Lambda}(N,\tau X) = 0$ and therefore $\Hom_{\Lambda}(M,\tau X) = 0$. But since $M \notin J(X)$ we must have $\Hom_{\Lambda}(X,M) \neq 0$
\end{proof}

The lemma below is useful for determining if modules are $\tau$-rigid in a given $\tau$-perpendicular subcategory.

\begin{lemma}\cite[Prop 5.8]{auslandersmalo81}\label{lem:ext_characterization_of_maps_to_tau}
    Let $\Lambda$ be an arbitrary algebra and $M,N$ two $\Lambda$-modules. We have $\Hom(M,\tau N) = 0$ if and only if $\Ext^1_\Lambda(N,\Gen M) = 0$. Further, if $J(X)$ is a $\tau$-perpendicular subcategory of $\mods \Lambda$, then $\Hom(M,\tau_{J(X)}N) = 0$ if and only if $\Ext^1_\Lambda(N,\Gen_\Lambda M \cap J(X)) = 0$.
\end{lemma}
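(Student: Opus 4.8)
The plan is to handle the two assertions separately: the first is classical Auslander--Reiten theory (it is in fact \cite[Prop.~5.8]{auslandersmalo81}), and the second I would deduce from the first applied inside the module category $\mods\Gamma\cong J(X)$.

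For the first equivalence I would use the Auslander--Reiten duality $\Ext^1_\Lambda(N,L)\cong D\,\overline{\Hom}_\Lambda(L,\tau N)$, where $D$ is the $k$-duality and $\overline{\Hom}$ denotes $\Hom$ modulo maps factoring through an injective. For ``$\Rightarrow$'': if $\Hom_\Lambda(M,\tau N)=0$ and $L\in\Gen M$, choose an epimorphism $M^k\twoheadrightarrow L$; any map $L\to\tau N$ vanishes after precomposing with it, hence is zero, so $\overline{\Hom}_\Lambda(L,\tau N)=0$ and $\Ext^1_\Lambda(N,L)=0$. For ``$\Leftarrow$'': given a nonzero $f\colon M\to\tau N$, put $L=\im f\in\Gen M$; the inclusion $L\hookrightarrow\tau N$ cannot factor through an injective module, for otherwise an injective envelope of $L$ would be a nonzero injective direct summand of $\tau N$, which is impossible since $\tau=D\Tr$ and $\Tr$ has no projective summands. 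Hence $\overline{\Hom}_\Lambda(L,\tau N)\neq 0$, so $\Ext^1_\Lambda(N,L)\neq 0$ with $L\in\Gen M$, and the contrapositive follows.

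For the second equivalence, let $F\colon J(X)\xrightarrow{\ \sim\ }\mods\Gamma$ denote Jasso's equivalence; by definition $\tau_{J(X)}$ is the Auslander--Reiten translate of $J(X)$, which corresponds to $\tau_\Gamma$ under $F$. Applying the first part in $\mods\Gamma$ to $FM,FN$ and transporting back along $F$ (using that $J(X)$ is a full subcategory of $\mods\Lambda$ to identify the relevant $\Hom$-spaces) gives: $\Hom_\Lambda(M,\tau_{J(X)}N)=0$ if and only if $\Ext^1_{J(X)}(N,Z)=0$ for every $Z\in\Gen_{J(X)}M$, where $\Ext^1_{J(X)}$ and $\Gen_{J(X)}$ are computed in the abelian category $J(X)$. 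It then remains to match these intrinsic notions with their $\Lambda$-versions. Since $J(X)$ is closed under extensions in $\mods\Lambda$ (a standard property of $\tau$-perpendicular categories, see \cite{jassoreduction}, \cite{buan2023perpendicular}), every short exact sequence of $\mods\Lambda$ with all terms in $J(X)$ is also a short exact sequence in $J(X)$, and one checks that the resulting map $\Ext^1_\Lambda(N,Z)\to\Ext^1_{J(X)}(N,Z)$ is injective for $N,Z\in J(X)$, while the inclusion $\Gen_\Lambda M\cap J(X)\subseteq\Gen_{J(X)}M$ is formal; conversely $\Ext^1_{J(X)}(N,Z)\to\Ext^1_\Lambda(N,Z)$ and $\Gen_{J(X)}M\subseteq\Gen_\Lambda M$ hold once one knows that monomorphisms and epimorphisms in $J(X)$ are monomorphisms and epimorphisms in $\mods\Lambda$. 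Substituting the identifications $\Ext^1_{J(X)}(N,Z)=0\iff\Ext^1_\Lambda(N,Z)=0$ and $\Gen_{J(X)}M=\Gen_\Lambda M\cap J(X)$ into the displayed criterion yields exactly $\Hom_\Lambda(M,\tau_{J(X)}N)=0\iff\Ext^1_\Lambda(N,\Gen_\Lambda M\cap J(X))=0$.

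The step I expect to be the real obstacle is precisely this last matching: confirming that the exact structure carried by $J(X)$ qua module category $\mods\Gamma$ coincides with the one inherited from $\mods\Lambda$, i.e.\ that the inclusion $J(X)\hookrightarrow\mods\Lambda$ is exact, so that kernels, cokernels, epimorphisms, and hence $\Gen$ and $\Ext^1$, all agree. Everything else above is classical or purely formal, and in the situations where this lemma is used (Nakayama algebras $C_n$ and hereditary algebras) this compatibility can be verified directly from the explicit description of $J(X)$; alternatively one can read the whole statement off the relative Auslander--Reiten theory of the subcategory $J(X)\subseteq\mods\Lambda$ in \cite{auslandersmalo81}, after identifying $\tau_{J(X)}$ with the relative translate used there.
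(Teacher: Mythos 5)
Your proposal is correct and follows essentially the same route as the paper: the first equivalence is exactly \cite[Prop.~5.8]{auslandersmalo81} (which you reprove via Auslander--Reiten duality, where the paper simply cites it), and the second is obtained, as in the paper, by applying that result inside the abelian category $J(X)\cong\mods\Gamma$ and using that $J(X)$ is extension-closed in $\mods\Lambda$. The ``real obstacle'' you flag --- that the exact structure of $J(X)$ agrees with the one inherited from $\mods\Lambda$, so that $\Ext^1$ and $\Gen$ match --- is precisely what the paper disposes of by citing \cite[Prop.~3.6, Thm.~3.8]{jassoreduction}, since $\tau$-perpendicular categories are exactly embedded (wide) subcategories; so your argument closes correctly.
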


\begin{proof}
     Since $J(X)$ is closed under extensions \cite[Prop. 3.6, Thm. 3.8]{jassoreduction}, we get the result by applying \cite[Prop 5.8]{auslandersmalo81} in $J(X)$.
\end{proof}

The following technical lemma will be used to prove \cref{lem:tau_rigid_in_J(x)_iff_tau_rigid}(3).

\begin{lemma}\label{lem:minimal_length_generator_in_cyclic_component}
Let $X$ be an indecomposable $C_n$-module and let $U \notin J(X)$ be generated by a module in the cyclic component of $J(X)$. Let $U'$ be the module of least length in the cyclic component of $J(X)$ generating $U$ and let $V'$ be a submodule of $U'$ also in the cyclic component of $J(X)$. For any module $V \notin J(X)$ generated by $V'$, we then have that $V'$ is the module in the cyclic component of $J(X)$ of least length generating $V$.
\end{lemma}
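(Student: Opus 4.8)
Write $X = M^t_k$; if $k = n$ then $X$ is projective and the cyclic component of $J(X)$ is zero, so there is nothing to prove, and we assume $k < n$. Denote by $\mathcal{C}$ the cyclic component of $J(X)$. The plan is to reduce everything to explicit combinatorics in $\mathsf{mod}\,C_n$. First I would reduce to the case where $U$ and $V$ (hence also $U'$ and $V'$) are indecomposable: since $\mathcal{C}$ is closed under direct sums and summands and every $C_n$-module is uniserial, \cref{prop:nakayama_cn_properties}(2) lets us replace any generating module in $\mathcal{C}$ by an indecomposable summand, so that for an indecomposable $N$ a module $W \in \mathcal{C}$ generates $N$ precisely when some indecomposable summand of $W$ has the same top as $N$ and length at least $\ell(N)$. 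Consequently, for an indecomposable $N \notin J(X)$ generated by $\mathcal{C}$, the least-length generator of $N$ in $\mathcal{C}$ is the indecomposable $M^c_\ell$ lying in $\mathcal{C}$ with $c = \topp N$ and with length minimal subject to being at least $\ell(N)$. Splitting $U$, resp.\ $V$, into indecomposable summands then deduces the general statement from the indecomposable one, and the degenerate case $V' = 0$ is vacuous as $V \neq 0$.

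The structural fact I would prove next is a membership criterion for $\mathcal{C}$. Starting from the standard formula $\Hom_{C_n}(M^a_p, M^b_q) \neq 0 \iff (a - b + q) \bmod n \in \{1, \dots, \min(p,q)\}$ and using $\tau X = M^{t-1}_k$ from \cref{prop:nakayama_cn_properties}(3), one computes $X^\perp$ and ${}^\perp \tau X$; combining this with \cref{lem:projectives_in_jasso_reduction_c_n}, which says the projectives of the cyclic component are $P(t), P(t+1), \dots, P(t+n-k-1)$, gives: $M^c_\ell$ lies in $\mathcal{C}$ if and only if $d := (c - t) \bmod n \le n - k - 1$ and $\ell \notin \{d+1, \dots, d+k\}$. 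In particular, for a top $S(c)$ occurring in $\mathcal{C}$ the lengths occurring there are exactly $\{1, \dots, d\} \cup \{d+k+1, \dots, n\}$, so the shortest module of top $S(c)$ in $\mathcal{C}$ whose length exceeds $d$ is $M^c_{d+k+1}$, and its socle is $S(t-k)$.

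Granting this, the rest is short. As $U \notin J(X)$ is generated by $\mathcal{C}$, its top $S(t_U)$ occurs in $\mathcal{C}$, so $d_U := (t_U - t) \bmod n \le n-k-1$, and since $U \notin J(X)$ the criterion gives $\ell(U) \in \{d_U+1, \dots, d_U+k\}$; by the first paragraph $U' = M^{t_U}_{d_U+k+1}$. Every submodule of $U'$ has socle $S(t-k)$, and the membership criterion shows that the submodules of $U'$ lying in $\mathcal{C}$ are precisely those of length at least $k+1$; each such $V'$ therefore again has the shape $M^{c_{V'}}_{d_{V'}+k+1}$ with $d_{V'} := (c_{V'}-t) \bmod n$. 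Finally, $V \notin J(X)$ generated by $V'$ forces $\topp V = S(c_{V'})$ and $\ell(V) \in \{d_{V'}+1, \dots, d_{V'}+k\}$, so by the first paragraph applied with $V$ in place of $U$ the least-length generator of $V$ in $\mathcal{C}$ is $M^{c_{V'}}_{d_{V'}+k+1} = V'$, which is what we wanted.

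The main obstacle is the membership criterion of the second paragraph: one must carry the $\bmod\ n$ arithmetic in the $\Hom$-computations through accurately and cleanly distinguish the cyclic component $\mathcal{C}$ from the hereditary component $\mathsf{mod}\,A_{k-1}$ of $J(X)$ — the two have disjoint sets of tops, with the residue $t-k$ belonging to neither — after which the argument is just bookkeeping with the interval $\{d+1, \dots, d+k\}$.
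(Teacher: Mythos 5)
Your proof is correct and follows essentially the same route as the paper: your residue $d=(c-t)\bmod n$ together with the forbidden interval $\{d+1,\dots,d+k\}$ is exactly the coordinate form of the paper's invariant $\ell^{>x}(Y)=\ell(t_{P(x)}(Y))$, whose value lying outside $\{1,\dots,\ell(X)\}$ characterises $X^\perp$ and whose value $\ell(X)+1$ characterises least-length generators in the cyclic component. Your observation that every submodule of $U'$ has socle $S(t-k)$ (hence is again of the form $M^{c}_{d+k+1}$ when it lies in the cyclic component) plays precisely the role of the paper's remark that $\ell^{>x}$ is unchanged under embeddings, which is the single step that makes the conclusion for $V'\subseteq U'$ immediate.
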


\begin{proof}
    We will need some observations on how the cyclic component of $J(X)$ embeds in the module category $\mods C_n$. Note that an indecomposable module $Y$ in $\mods C_n$ lies in the cyclic component of $J(X)$ if and only if it is generated by some projective in the cyclic component and lies in $X^\perp$. This is because the condition of $Y$ lying in ${}^\perp \tau X$ is implied by $Y$ being generated by a module in $J(X)$. Let now $S(x)$ be the top of $X$. For an indecomposable module $Y$ in $\mods C_n$, let \[\ell^{>x}(Y) = \ell(t_{P(x)}(Y))\] where $t_{P(x)}(Y)$ denotes the trace of $P(x)$ in $Y$. Note that $Y$ lies in $X^\perp$ exactly when $\ell^{>x}(Y) \notin \{1,2,\dots,\ell(X)\}$. If $\ell^{>x}(Y) > 0$ and $Y$ embeds in some module $Y'$ then $\ell^{>x}(Y') = \ell^{>x}(Y)$. Also, if $\ell^{>t}(M^t_l) > 0$ then $\ell^{>t}(M^t_{l+1}) = \ell^{>t}(M^t_l)+1$.

    If $Y \notin J(X)$ and $Y$ is generated by some module in the cyclic component of $J(X)$, then $Y \notin X^\perp$ and so $\ell(X) \geq \ell^{>x}(Y) > 0$. So for any module $M$ in the cyclic component of $J(X)$ generating $Y$, we have $\ell^{>x}(M) \geq \ell(X)+1$. Note that $M$ is of minimal length among the modules in the cyclic component of $J(X)$ generating $Y$ exactly when $\ell^{>x}(M) = \ell(X)+1$.

    We can now conclude. Since $V'$ generates $V \notin J(X)$ and $V'$ lies in the cyclic component of $J(X)$, we have $\ell^{>x}(V') > 0$. We also have that $V'$ injects into $U'$, so $\ell^{>x}(V) = \ell^{>x}(U') = \ell(X) +1$. Thus $V'$ is the module in the cyclic component $J(X)$ of minimal length generating $V$, as wanted.
\end{proof}

\begin{lemma}\label{lem:tau_rigid_in_J(x)_iff_tau_rigid}
    Let $M,N$ be indecomposable modules in $J(X)$ for $X$ an indecomposable $C_n$-module. Then $\Hom(M,\tau N) = 0$ if and only if $\Hom(M,\tau_{J(X)}(N)) = 0$.
\end{lemma}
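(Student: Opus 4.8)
The statement is an equivalence about when a map $M \to \tau N$ (resp.\ $M \to \tau_{J(X)} N$) vanishes, for $M, N$ indecomposable in $J(X)$. By \cref{lem:ext_characterization_of_maps_to_tau}, the condition $\Hom(M, \tau N) = 0$ is equivalent to $\Ext^1_{C_n}(N, \Gen_{C_n} M) = 0$, while $\Hom(M, \tau_{J(X)} N) = 0$ is equivalent to $\Ext^1_{C_n}(N, \Gen_{C_n} M \cap J(X)) = 0$. Since $\Gen_{C_n} M \cap J(X) \subseteq \Gen_{C_n} M$, one implication is immediate: if $\Hom(M, \tau N) = 0$ then $\Hom(M, \tau_{J(X)} N) = 0$. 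So the whole content is the converse: assuming $\Ext^1_{C_n}(N, \Gen_{C_n} M \cap J(X)) = 0$, deduce $\Ext^1_{C_n}(N, U) = 0$ for \emph{every} $U \in \Gen_{C_n} M$.

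\textbf{Key steps.} First I would reduce to the case of a single indecomposable $U$ generated by $M$: by \cref{prop:nakayama_cn_properties}(2) and long-exact-sequence bookkeeping (an epimorphism $M^{\oplus s} \twoheadrightarrow U$ with kernel a successive extension of indecomposable modules each generated by $M$), it suffices to show $\Ext^1(N, U) = 0$ for all indecomposable $U \in \Gen M$. Next, if $U \in J(X)$ we are done by hypothesis, so assume $U \in \Gen M \setminus J(X)$. The strategy is then to replace $U$ by a module $U'$ in $J(X)$ with $\Ext^1(N, U') \twoheadrightarrow \Ext^1(N, U)$ being surjective (or with $\Ext^1(N,U) \hookrightarrow \Ext^1(N, U')$), so that vanishing of the former forces vanishing of the latter. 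Concretely, since $U$ is indecomposable over a Nakayama algebra and $M$ generates it, $M$ is $M^t_l$ with $\topp U = \topp M = S(t)$; one enlarges $U$ to the module $U' = M^t_{l''}$ of least length such that $U' \in J(X)$ — this is possible using \cref{lem:projectives_in_jasso_reduction_c_n} together with the $\ell^{>x}$-bookkeeping from the proof of \cref{lem:minimal_length_generator_in_cyclic_component} (if $U \notin X^\perp$, lengthening $U$ from the top by enough steps lands it back in $X^\perp$ and hence in $J(X)$, as any such lengthening stays generated by a projective in the relevant component). The inclusion $U \hookrightarrow U'$ has uniserial cokernel $W$ supported away from the socle of $U$, and one argues $\Hom(N, W) = 0$ (because $N \in J(X)$ constrains its composition factors relative to $X$, while $W$'s composition factors are exactly those that obstruct $U$ from lying in $X^\perp$) to get $\Ext^1(N, U) \hookrightarrow \Ext^1(N, U')$; since $U' \in J(X) \cap \Gen M$, the right side vanishes by hypothesis. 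Dually, if instead $U$ is too long and its \emph{socle} causes the failure of membership in ${}^\perp \tau X$, one passes to a submodule.

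\textbf{Main obstacle.} The delicate point is controlling how an indecomposable $U \in \Gen M$ can fail to lie in $J(X) = X^\perp \cap {}^\perp \tau X$: it could fail the $X^\perp$ condition, the ${}^\perp\tau X$ condition, or interact with the two components ($\mods A_{k-1}$ and $\mods C_{n-k}$) of $J(X)$ differently. Membership in $\Gen M$ with $M \in J(X)$ already forces $U \in {}^\perp \tau X$ (as in the proof of \cref{lem:m_notinjasso_but_generated_by_jasso_characterization}, $\Hom(U, \tau X) = 0$), so the \emph{only} way $U$ leaves $J(X)$ is $U \notin X^\perp$, i.e.\ $\Hom(X, U) \neq 0$ — exactly the situation of \cref{lem:minimal_length_generator_in_cyclic_component}. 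This narrows the analysis considerably, and the $\ell^{>x}$ invariant computes precisely how far $U$ must be lengthened to re-enter $X^\perp$. The remaining work is then checking that this lengthening does not disturb the $\Ext^1(N,-)$ computation — i.e.\ verifying $\Hom_{C_n}(N, U'/U) = 0$ by comparing composition factors of $N$ (constrained by $N \in J(X)$, in particular $\ell^{>x}(N) \notin \{1,\dots,\ell(X)\}$) with those of the cokernel $U'/U$ (which lie exactly in the "forbidden" range $\{1, \dots, \ell(X)\}$ relative to the top $S(x)$ of $X$). Assembling these pieces gives $\Ext^1_{C_n}(N, U) = 0$ for all $U \in \Gen_{C_n} M$, hence $\Hom(M, \tau N) = 0$, completing the converse.
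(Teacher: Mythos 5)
Your reduction to a single indecomposable $U\in\Gen M$ and your observation that $\Gen M\subseteq{}^\perp\tau X$ (so the only obstruction to $U\in J(X)$ is $U\notin X^\perp$) are both correct and match observations the paper uses. But the central step has a genuine gap: the module $U'=M^t_{l''}$ of the same top as $U$ and greater length does \emph{not} contain $U$ as a submodule. Over $C_n$ the submodules of $M^t_{l''}$ are the radical powers $M^{t-j}_{l''-j}$, so $U=M^t_l$ is a \emph{quotient} of $U'$, not a subobject; there is no short exact sequence $0\to U\to U'\to W\to 0$. With the correct sequence $0\to K\to U'\to U\to 0$ (where $K=\rad^l U'$), the long exact sequence for $\Ext^\bullet(N,-)$ only yields an injection $\Ext^1(N,U)\hookrightarrow\Ext^2(N,K)$ once $\Ext^1(N,U')=0$, and $\Ext^2$ does not vanish for free over $C_n$ (which has infinite global dimension). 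The alternative reading of your ``lengthening from the top'' (i.e.\ $U\hookrightarrow M^{t+j}_{l+j}$) does give an inclusion with cokernel away from the socle, but it changes the top, so the resulting module leaves $\Gen M$ and the hypothesis no longer applies to it; worse, it does not repair membership in $X^\perp$, since the invariant $\ell^{>x}$ is computed from $S(x)$ down to the socle and is unchanged by adding composition factors above the top. So neither direction of ``enlarging'' $U$ produces a module that is simultaneously in $\Gen M\cap J(X)$ and sits in an exact sequence with $U$ in the direction your argument needs.

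The paper sidesteps this by never comparing $\Ext^1(N,U)$ with $\Ext^1(N,U')$ through a connecting sequence. Instead it starts from a non-split extension $0\to M'\to E\to N\to 0$ with $M'\in\Gen M$, locates (via uniseriality and a length count) the unique indecomposable summand $U$ of $E$ through which both the injection and the surjection factor, and then stretches $U$ and $M'$ \emph{downward by the same amount} to $U'$ and $M''$, producing a new non-split extension $0\to M''\to U'\oplus E/U\to N\to 0$ with the same quotient $N$ and with $M''\in\Gen M\cap J(X)$ (this is where \cref{lem:minimal_length_generator_in_cyclic_component} is used). Keeping $N$ fixed and modifying the sub- and middle terms together is what makes the argument close. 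You would also need the paper's case split by components of $J(X)\cong\mods(A_{k-1}\times C_{n-k})$: when $N$ lies in the hereditary component the paper does not argue via the $\Ext$ characterization at all, but shows directly that $\tau N$ either stays in the hereditary component or embeds into $\tau X$, forcing $\Hom(M,\tau N)=0$; your uniform $\Ext$-based strategy does not obviously cover that case.
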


\begin{proof}
    By \cref{lem:ext_characterization_of_maps_to_tau}, $\Hom(M,\tau N) = 0$ implies $\Hom(M,\tau_{J(X)} N) = 0$. We will now prove the converse by splitting into three cases depending on which component of $J(X) \cong \mods( A_i \times C_j)$ the modules $M$ and $N$ lie in. We call $\mods A_i$ the hereditary component of $J(X)$, and $\mods C_j$ the cyclic component of $J(X)$. 

    Note first that if $X$ is projective, then we have no non-zero cyclic component, and so $M$ and $N$ both lie in the hereditary component. In the cases where there is a non-zero cyclic component, we can assume $X$ to be non-projective.

    \begin{enumerate}
        \item Assume first that $M$ lies in the hereditary component of $J(X)$. Note that there are no maps from $X$ to any module in the hereditary component of $J(X)$, because the top of $X$ does not appear in any of the modules $\rad^1 X, \rad^2 X,\dots$ which by \cref{lem:projectives_in_jasso_reduction_c_n} are the relative projectives in the hereditary component of $J(X)$. Thus by \cref{lem:m_notinjasso_but_generated_by_jasso_characterization} all modules generated by $M$ in $\mods C_n$ are indeed modules in $J(X)$, so $\Gen M \cap J(X) = \Gen M$ and therefore $\Ext^1_{C_n}(N,\Gen_{C_n} M \cap J(X)) = \Ext^1_{C_n}(N,\Gen_{C_n} M)$ which by \cref{lem:ext_characterization_of_maps_to_tau} gives us our wanted conclusion.

         \item Assume now that $M$ lies in the cyclic component and $N$ in the hereditary component of $J(X)$. If $\tau N \in J(X)$ then $\tau N$ is also in the hereditary component: Observe that since $J(X)$ is closed under extensions, $\Ext^1_{C_n}(N,Y) = 0$ for any $Y$ in the cyclic component of $J(X)$. Now, as $\Ext_{C_N}^1(N,\tau N) \neq 0$ we get that $\tau N$ cannot lie in the cyclic component of $J(X)$. 
        We therefore in this case have $\Hom(M,\tau N)  = 0$ since $M$ and $\tau N$ again lie in different components of $J(X)$. Assume now that $\tau N \notin J(X)$.

        If $N$ is a quotient of $\rad^i X$ then $\tau N$ is a quotient of $\rad^i \tau X$ by  \cref{prop:nakayama_cn_properties} (3). Note that every proper quotient of $\rad^i \tau X$ lies in $J(X)$: If $\rad^i \tau X = M^t_l$ then $M^t_{l-1} = \rad^{i+1}X$. From $\tau N \notin J(X)$, we then get that $N = \rad^i X$.
        
        Thus $\tau N$ has an inclusion into $\tau X$. But then any nonzero map  in $\Hom(M,\tau N)$ would induce a nonzero map $M \to \tau X$, which we cannot have since $M \in J(X)$. Therefore, we also have $\Hom(M,\tau N) = 0$ if $\tau N \notin J(X)$.
        
        \item If we are not in the cases dealt with above, $M$ and $N$ both lie in the cyclic component of $J(X)$. Assume that $\Ext^1_{C_n}(N,\Gen M) \neq 0$. Let \[0 \to M' \xhookrightarrow{f} E \overset{g}\twoheadrightarrow N \to 0\] be a non-split short exact sequence with $M' \in \Gen M$. We will construct a non-split short exact sequence showing that $\Ext^1_{C_n}(N,\Gen_\Lambda M \cap J(X)) \neq 0$.
        
        By \cref{prop:nakayama_cn_properties}(2), there is an indecomposable direct summand $U$ of $E$ such that the co-restriction of $f$ to $U$ is an injection and an indecomposable direct summand $V$ of $E$ such that the restriction of $g$ to $V$ is an epimorphism. As the sequence is non-split, we have $\ell(U) >\ell(M')$ and $\ell(V) > \ell(N)$. Thus \[\ell(U)+\ell(V) > \ell(M')+\ell(N) = \ell(E)\] so $U$ and $V$ cannot be distinct indecomposable direct summands of $E$, and so we must have $U = V$.

        By \cref{lem:projectives_in_jasso_reduction_c_n}, the projectives in the cyclic component of $J(X)$ are projective in $\mod C_n$. The projective cover of $N$ lies in the cyclic component of $J(X)$ and must coincide with the projective cover of $U$, so $U$ is generated by some module in $J(X)$. 
        
        Let $U'$ be the module of minimum length in $J(X)$ generating $U$. If $U = M^t_l$, then $U' = M^t_{l+i}$ for some integer $i$. Let now $M''$ be the indecomposable module with the same top as $M'$ but of length $i$ more, that is $M'' = M^{\text{top}(M')}_{\ell(M')+i}$. We obtain a short exact, non-split sequence  
        \[0 \to M'' \xhookrightarrow{f'} U' \oplus E/U \overset{g'}\twoheadrightarrow N \to 0\] where $f'$ and $g'$ are the unique maps that are non-zero on each (co)-restriction. To complete the proof, it is sufficient to show that $M'' \in \Gen M \cap J(X)$. By \cref{lem:minimal_length_generator_in_cyclic_component} (setting $V' = M''$ and $V = M')$, we get that $M''$ is the module of least length in the cyclic component of $J(X)$ generating $M'$, so $M''$ is   generated by $M$.
        \end{enumerate}
\end{proof}
    
From the above lemma it follows that any module in $J(X) \subseteq \mods C_n$ is relatively $\tau$-rigid in $J(X)$ if and only if it is $\tau$-rigid in $\mods C_n$.

\begin{lemma}
   Let $(B,C)$ be a $\tau$-exceptional sequence in $J(X)$. Then $(B,C)$ is also a $\tau$-exceptional sequence in $\mods C_n$ and $\Psi^{-1}(B,C) = {\Psi^{-1}_{J(X)}}(B,C)$. 
\end{lemma}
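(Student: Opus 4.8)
The plan is to recast the statement in terms of TF-ordered $\tau$-rigid modules, where $\Psi^{-1}$ is easy to describe (for any $\tau$-exceptional sequence $(B,C)$ we have $\Psi^{-1}(B,C)=f_C^{-1}(B)\oplus C$), and to lean on the special feature of $C_n$ isolated in \cref{lem:tau_rigid_in_J(x)_iff_tau_rigid} and the remark after it: inside a $\tau$-perpendicular subcategory $J(X)\subseteq\mods C_n$, relative $\tau$-rigidity coincides with $\tau$-rigidity in $\mods C_n$.

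First I would unpack the hypothesis. Since $(B,C)$ is $\tau$-exceptional in $J(X)$, the modules $B$ and $C$ are indecomposable -- a categorical property, so they remain indecomposable as $C_n$-modules -- the module $C$ is $\tau_{J(X)}$-rigid, and $B$ is $\tau_{J_{J(X)}(C)}$-rigid. The remark following \cref{lem:tau_rigid_in_J(x)_iff_tau_rigid} then gives that $C$ is $\tau$-rigid in $\mods C_n$. Next, applying \cref{prop:f_m-bijection-taurigid-J(M)-rigid} inside $J(X)$ to the $\tau_{J(X)}$-rigid module $C$, I obtain the unique indecomposable $\tau_{J(X)}$-rigid module $B''$ with $B''\oplus C\in\tfo{J(X)}$ and $f_C(B'')=B$; by the description of $\Psi^{-1}$ this says $\Psi^{-1}_{J(X)}(B,C)=B''\oplus C$.

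The core of the proof is to upgrade $B''\oplus C$ from $\tfo{J(X)}$ to $\tfo{C_n}$. By the remark after \cref{lem:tau_rigid_in_J(x)_iff_tau_rigid}, $B''$ and $B''\oplus C$ are $\tau$-rigid in $\mods C_n$; and $B''\notin\Gen_{C_n}C$, since any surjection $C^m\twoheadrightarrow B''$ in $\mods C_n$ would already be an epimorphism in $J(X)$ (its cokernel is $0\in J(X)$), contradicting $B''\notin\Gen_{J(X)}C$. Hence the ordered module $B''\oplus C$ lies in $\tfo{C_n}$. Because the trace $t_C(-)$, and therefore $f_C(-)=(-)/t_C(-)$, is computed identically in the full subcategory $J(X)$ and in $\mods C_n$, we still have $f_C(B'')=B$. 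Then the fact that $M\oplus N\in\tfo{\Lambda}$ with $M,N$ indecomposable yields a $\tau$-exceptional sequence $(f_N(M),N)$ shows that $(B,C)$ is $\tau$-exceptional in $\mods C_n$; and \cref{prop:f_m-bijection-taurigid-J(M)-rigid} applied in $\mods C_n$ identifies $B''$ as $f_C^{-1}(B)$, whence $\Psi^{-1}(B,C)=f_C^{-1}(B)\oplus C=B''\oplus C=\Psi^{-1}_{J(X)}(B,C)$.

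The step I expect to need the most care is exactly this transfer of TF-orderedness (and of the $f_C$-bijection) between $J(X)$ and $\mods C_n$. It is tempting to compare the iterated $\tau$-perpendicular categories $J_{J(X)}(C)$ and $J_{C_n}(C)$ head-on, but these are genuinely different subcategories (the first has rank $n-2$, the second rank $n-1$), so I would deliberately avoid that and route everything through TF-ordered $\tau$-rigid modules, using \cref{lem:tau_rigid_in_J(x)_iff_tau_rigid} to carry $\tau$-rigidity back and forth and the ambient-independence of the trace to keep $f_C$ under control.
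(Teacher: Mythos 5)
Your proposal is correct and follows essentially the same route as the paper: both set $B''\oplus C=\Psi^{-1}_{J(X)}(B,C)$, transfer $\tau$-rigidity to $\mods C_n$ via \cref{lem:tau_rigid_in_J(x)_iff_tau_rigid} (and the remark after it), and use that $J(X)$ is a full, exactly embedded subcategory to see that $f_C$ is computed identically in both categories. The only difference is that you spell out the transfer of TF-orderedness ($B''\notin\Gen_{C_n}C$), a point the paper's proof leaves implicit; this is a welcome extra detail rather than a divergence.
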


\begin{proof}
    Let ${\Psi^{-1}_{J(X)}}(B,C) = B' \oplus C$. Since $B' \oplus C$ is $\tau_{J(X)}$-rigid, it is also $\tau$-rigid by \cref{lem:tau_rigid_in_J(x)_iff_tau_rigid}. Thus we compute $\Psi(B' \oplus C) = (f_C(B'),C) = {\Psi_{J(X)}}(B' \oplus C)$, where the last equality follows from noting that $J(X)$ is a full subcategory.
\end{proof}

We now prove the final technical result necessary to establish the main result of the paper.

\begin{proposition}\label{prop:property_2_cn}
    Property M2 in \cref{prop:properties_mutation_braid_group} holds for any algebra of the form $C_n$: Given an indecomposable $C_n$-module $X$ and a $\tau_{J(X)}$-exceptional sequence $(B',C) = \Psi(B \oplus C)$, we have \[\varphi^{J(X)}(B',C) = \varphi(B',C).\]
\end{proposition}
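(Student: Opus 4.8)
The plan is to reduce the claim to a comparison of the combinatorial left mutation $\overline{\varphi}$ of TF-ordered $\tau$-rigid modules, and then to verify this comparison case by case via \cref{thm:bkt_tf_formula}. Set $M := \Psi^{-1}(B',C) = B \oplus C$. By \cref{lem:tau_rigid_in_J(x)_iff_tau_rigid} and the lemma immediately preceding this proposition, $M$ is simultaneously a TF-ordered $\tau$-rigid module over $\mods C_n$ and over $J(X)$, we have $\Psi^{-1}_{J(X)}(B',C) = M$, and $\Psi$ and $\Psi_{J(X)}$ agree on TF-ordered $\tau_{J(X)}$-rigid modules. Since $\varphi(B',C) = \Psi(\overline{\varphi}(M))$ and $\varphi^{J(X)}(B',C) = \Psi_{J(X)}(\overline{\varphi}_{J(X)}(M))$, it suffices to show that $\overline{\varphi}(M)$ (computed in $\mods C_n$) and $\overline{\varphi}_{J(X)}(M)$ (computed in $J(X)$) agree as ordered modules and that this common module again lies in $J(X)$. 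By \cref{lem:projectives_in_jasso_reduction_c_n}, $J(X) \cong \mods A_{k-1} \times \mods C_{n-k}$ with $k = \ell(X)$ — a product of a linear and a cyclic Nakayama algebra — so $\overline{\varphi}_{J(X)}(M)$ is governed by \cref{thm:bkt_tf_formula} as well (using \cref{lem:disconnect_mutation_swapping} when $B$ and $C$ lie in different components of $J(X)$).

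First I would treat the case where $B$ and $C$ lie in different components of $J(X)$. There \cref{lem:disconnect_mutation_swapping} gives $\overline{\varphi}_{J(X)}(M) = C \oplus B$, and $B' = B$ since $\Hom(C,B) = 0$ for modules in different components. On the $\mods C_n$ side, fullness of $J(X)$ forces $\Hom_{C_n}(B,C) = \Hom_{C_n}(C,B) = 0$; if $C$ is $C_n$-projective this puts us in Case TF-1a, and otherwise $C$ is neither generated by $B$ nor isomorphic to any $\rad^i B$, so by \cref{prop:nakayama_cn_properties}(4) and the explicit shape of $\bcomp(B)$ we get $C \notin \add(\bcomp(B) \oplus \ccomp(B))$ and we are in Case TF-3; both cases output $C \oplus B$, which lies in $J(X)$.

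When $B$ and $C$ lie in the same component — the hereditary component $\mods A_{k-1}$ or the cyclic component $\mods C_{n-k}$ of $J(X)$ — the work is to check that \cref{thm:bkt_tf_formula} assigns $M$ a case in $\mods C_n$ whose output agrees with that of the case assigned inside the component. I would assemble the needed dictionary from \cref{lem:projectives_in_jasso_reduction_c_n}, \cref{lem:minimal_length_generator_in_cyclic_component} and \cref{prop:nakayama_cn_properties}: Hom- and Gen-vanishing conditions are insensitive to the ambient category (fullness of $J(X)$, \cref{prop:nakayama_cn_properties}(2), and the analysis of $\Gen$ inside $J(X)$ already carried out in the proof of \cref{lem:tau_rigid_in_J(x)_iff_tau_rigid}); the relative projectives of the cyclic component are honest $C_n$-projectives, so ``$C$ projective'' has the same meaning there, while the relative projectives of the hereditary component are the modules $\rad^i X$, which are never $C_n$-projective; membership in $\add\bcomp(B)$ transfers via \cref{lem:projectives_in_jasso_reduction_c_n} and \cref{prop:nakayama_cn_properties}(4); and the radical submodule of $B$ used by the Case TF-2 formula is the same $C_n$-module in either category, where for the cyclic component this is the content of \cref{lem:minimal_length_generator_in_cyclic_component}. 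With this dictionary every case transports, except that a Case TF-1 step over the hereditary component (where $C = \rad^i X$) becomes a Case TF-2 or TF-3 step in $\mods C_n$; for such $C$ I would check directly, using $B \in J(X)$ and \cref{prop:nakayama_cn_properties}(4) applied to $X$, that the TF-2/TF-3 output in $\mods C_n$ reproduces the swap $C \oplus B$ (respectively the projective-cover variant $P(\rad^{\bullet}B)\oplus B$ when $B$ is $C_n$-projective), and that it still lies in $J(X)$. The irregular case TF-4 is handled the same way: irregularity in $J(X)$ forces $C \in \add\bcomp_{J(X)}(B) \setminus \proj$, which by \cref{lem:projectives_in_jasso_reduction_c_n} is also irregular with $C$ non-projective in $\mods C_n$, so \cref{thm:bkt_tf_formula} outputs $B \oplus f_C B$ on both sides.

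The step I expect to be the main obstacle is exactly this reconciliation: since the embedding $J(X) \hookrightarrow \mods C_n$ preserves neither projectivity (hereditary component) nor lengths (cyclic component — a relative projective has length $n-k$ as a $C_{n-k}$-module but length $n$ as a $C_n$-module), the case label produced by \cref{thm:bkt_tf_formula} can genuinely differ between the two categories, and one must check by hand that the outputs coincide anyway. A tempting shortcut via \cref{cor:tau_uniqueness_jasso_preserve} — both mutations have the form $(?,B')$ by \cref{prop:property_1_c_n} and \cref{prop:property_1_n}, so it would be enough to prove $J_{\mods C_n}(\varphi^{J(X)}(B',C)) = J_{\mods C_n}(B',C)$ — founders on the fact that $J_{\mods C_n}$ of a sequence lying inside $J(X)$ is not visibly determined by its $J_{J(X)}$, so I would fall back on the case analysis above.
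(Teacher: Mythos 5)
Your overall strategy is exactly the paper's: use the transfer lemmas to reduce the claim to comparing $\overline{\varphi}(B\oplus C)$ computed in $\mods C_n$ with $\overline{\varphi}_{J(X)}(B\oplus C)$ computed in $J(X)$, dispose of the case where $B$ and $C$ lie in different components of $J(X)$ via \cref{lem:disconnect_mutation_swapping}, observe that in the cyclic component the case labels of \cref{thm:bkt_tf_formula} simply agree, and treat the hereditary component as the only place where labels genuinely shift. You also correctly diagnose why the shortcut through \cref{cor:tau_uniqueness_jasso_preserve} does not obviously work. However, one entry of your predicted dictionary for the hereditary component is wrong. When $B$ and $C$ both lie in the hereditary component and $C$ is relatively projective there (so $C\cong\rad^iX$) with $\Hom(C,B)\neq 0$ --- case TF-1b in $J(X)$ --- the $J(X)$-output is $B\oplus f_CB$, not a swap; and in $\mods C_n$ one lands in case TF-4, not TF-2 or TF-3: by TF-ordering there is no epimorphism $C\to B$, so any nonzero map $C\to B$ is a monomorphism, forcing $C\cong\rad^jB$, i.e.\ $C$ is a non-projective summand of $\bcomp(B)$. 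Both sides then output $B\oplus f_CB$, so the proposition survives, but neither of the outputs you predicted for a TF-1 step over the hereditary component ($C\oplus B$, or $P(\rad^{\bullet}B)\oplus B$ when $B$ is projective) is correct in the TF-1b sub-case. Your TF-1a $\to$ TF-3 prediction, your TF-4 $\to$ TF-4 analysis, and your treatment of the cyclic component all match the paper; since you explicitly defer to a direct check, the error is recoverable, but as written the dictionary misses the TF-1b $\to$ TF-4 transition that the paper verifies.
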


\begin{proof}

    We give a case-by-case proof depending on how $B$ and $C$ as modules in $J(X)$ embed into $\mods C_n$. Largely relying of the formulas in \cref{thm:bkt_tf_formula}, we show in each situation that $\overline{\varphi}^{J(X)}(B\oplus C) = \overline{\varphi}(B\oplus C)$, implying that $\varphi^{J(X)}(B',C) = \varphi(B',C)$.

    \begin{enumerate}
        \item Assume first that $B$ and $C$ lie in different components of $J(X)$. Then $\overline{\varphi}^{J(X)}(B \oplus C) = C \oplus B$ by \cref{lem:disconnect_mutation_swapping}. We now compute the mutation in $\mods C_n$. 

        If $C$ is projective in $\mods C_n$, we are in case TF-1a and so $\overline{\varphi}(B \oplus C) = C \oplus B$ as wanted. Assume that $C$ is not projective in $\mods C_n$. We wish to show that we are in case TF-3, namely that $C$ is neither in the Bongartz nor co-Bongartz complement of $B$. Since $C$ lies in a different component of $J(X)$, we have $\Hom_{C_n}(C,B) = 0 = \Hom_{C_n}(B,C)$. Note that non-projective summands of the Bongartz complement of $B$ have maps into $B$, and summands of the co-Bongartz complement of $B$ have maps coming from $B$. Therefore, if $C$ is not projective we are in case TF-3 and so $\overline{\varphi}(B\oplus C) = C\oplus B$ as wanted.
        
        \item Assume now that $B$ and $C$ both lie in the cyclic component of $J(X)$. Since $J(X)$ is full and since taking projective cover of a module (which appears in case TF-2b) in the cyclic component of $J(X)$ coincides with taking projective cover of the module in $\mods C_n$, it is sufficient to show that the TF-ordered module $B \oplus C$ lies in the same case (TF1-4) in $J(X)$ and in $\mods C_n$.

        \begin{enumerate}
            \item[(TF-1)]  Since $C$ lies in the cyclic component of $J(X)$, it is $C$ is projective in $J(X)$ if and only if it is projective in $\mods C_n$, so we are in TF-1a (b)  in $J(X)$ if and only if we are in TF-1a (b) in $\mods C_n$.
            \item[(TF-2)] If $C$ is generated by $B$ then we are in case TF-2 in both $J(X)$ and in $\mods C_n$. Since being projective in the cyclic component of $J(X)$ implies being projective in $\mods C_n$, we are in case TF2-a (b) in $J(X)$ exactly when we are in case TF-2a (b) in $\mods C_n$.
            \item[(TF-4)] Notice that the non-projective summands of the Bongartz complement of $B$ in $J(X)$ are the modules of the form $\rad^i B$ who also lie in $J(X)$. Thus we are in TF-4 in $J(X)$ exactly when we are in TF-4 in $\mods C_n$.
            \item[(TF-3)] If we are not in any of the above cases, we must be in case TF-3, both in $J(X)$ and in $\mods C_n$.
        \end{enumerate}

        \item Assume that $B$ and $C$ both lie in the hereditary component of $J(X)$. Note that the projectives in this subcategory are exactly the modules of the form $\rad^i X$ for $i \geq 1$. In particular, none of these are projective in $\mods C_n$, which we must take into consideration in TF-2b. We must also keep in mind that taking (co-)Bongartz complements differ in $J(X)$ and in $\mods C_n$. 

        For each case in $J(X)$, we now identify the corresponding case in $\mods C_n$ and compare the computations of $\overline{\varphi}$.

        \begin{enumerate}
            \item[(TF-1a)]  If we are in Case TF-1a in $J(X)$, then $C$ is not of the form $\rad^i B$ and so we must be in case TF-3 in $\mods C_n$. Computing $\overline{\varphi}$ is given by the same formula for Case TF-1a and TF-3.

            \item[(TF-1b)]  If we are in Case TF-1b in $J(X)$, then by $\tau$-rigidity and TF-ordering of $B \oplus C$ we must have $C \cong \rad^i B$ and so we are in case TF-4 in $\mods C_n$. Computing $\overline{\varphi}$ is given by the same formula for Case TF-1b and TF-4.

            \item[(TF-2a)]  If we are in case TF-2a in $J(X)$, then $C \cong B/\rad^i B$ for some $i \geq 1$ and so we are also in case TF-2a in $\mods C_n$.

            \item[(TF-2b)] If we are in case TF-2b in $J(X)$ then note first that $P_{J(X)}(\rad^k B) = \rad^k B$ since we are in a hereditary subcategory of $J(X)$, so $\overline{\varphi}^{J(X)}(B \oplus C) = \rad^k B \oplus B$. In $\mods C_n$, we would be in case TF-2a, since no indecomposable module in the hereditary component of $J(X)$ is projective in $\mods C_n$. We thus get $\overline{\varphi}(B \oplus C) = \rad^k B \oplus B$ as wanted.

            \item[(TF-3)] Observe that the modules $\rad^i B$ ($B/\rad^i B$) are exactly the summands of the (co) Bongartz complement of $B$ taken in $\mods C_n$ that also lie in the hereditary component of $J(X)$, and that these summands also appear in the (co) Bongartz complement of $B$ in $J(X)$.  Thus, if we are in TF-3 in $J(X)$, then we are in TF-3 in $\mods C_n$ too.
            
            \item[(TF-4)] If we are in case TF-4 in $J(X)$ then $B$ is not projective in $J(X)$ and so $C = \rad^iB$ for some $i \geq 1$ and so we are in case TF-4 also in $\mods C_n$.
        \end{enumerate}

    \end{enumerate}
\end{proof}

\begin{proposition}\label{prop:property_2_N}
    Property M2 in \cref{prop:properties_mutation_braid_group} holds for any algebra in $\mathcal{N}$. 
\end{proposition}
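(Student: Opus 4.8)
The plan is to reduce everything to the connected case, where Property M2 is already available: it holds for algebras of the form $A_n$ because those are hereditary (\cref{prop:hereditary_properties_mutation_braid_group}) and for algebras of the form $C_n$ by \cref{prop:property_2_cn}. Write $\Lambda = \Lambda_1 \times \cdots \times \Lambda_m$ with each $\Lambda_k$ connected, so each $\Lambda_k$ is of the form $A_{a_k}$ or $C_{c_k}$. An indecomposable $\tau$-rigid module $X$ over $\Lambda$ lies in a single component $\mods \Lambda_{k_X}$, and, as observed in the proof that $\mathcal{N}$ is closed under Jasso reductions, $J(X) \cong J_{\mods \Lambda_{k_X}}(X) \times \prod_{k \neq k_X} \mods \Lambda_k$. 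Fix a pair $(B,C)$ which is $\tau$-exceptional both in $\mods \Lambda$ and in $J(X)$.

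The first step I would isolate is that mutation is computed componentwise: if $(B,C)$ is a $\tau$-exceptional sequence in a disconnected module category $\mathcal{D} = \mathcal{D}_1 \times \cdots \times \mathcal{D}_r$ with both $B$ and $C$ in one component $\mathcal{D}_l$, then $\varphi^{\mathcal{D}}(B,C) = \varphi^{\mathcal{D}_l}(B,C)$. Indeed, for modules supported in $\mathcal{D}_l$ the functors $\Hom$ and $\Gen$, and hence $f_C$ and $f_C^{-1}$, agree whether computed in $\mathcal{D}$ or in $\mathcal{D}_l$; the Bongartz and co-Bongartz complements of $B$ inside $\mathcal{D}_l$ are the $\mathcal{D}_l$-parts of the corresponding complements in $\mathcal{D}$ by \cite[Lem. 4.12]{bkt}; and $J_{\mathcal{D}}(B,C) = J_{\mathcal{D}_l}(B,C) \times \prod_{l' \neq l} \mathcal{D}_{l'}$ by \cite[Thm. 6.4]{buan2023perpendicular}. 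Feeding this into \cref{def:regular_mutation} (and into the irregular formula of \cite[Sec. 3.3]{BHM2024}, which one likewise only needs to know respects the product) shows that the module $B'$ is unchanged, and by \cref{cor:tau_uniqueness_jasso_preserve} so is the new first coordinate; a rank count on the $\mathcal{D}_l$-component of $J_{\mathcal{D}}(B')$ forces that coordinate to lie in $\mathcal{D}_l$, so it coincides with the mutation computed inside $\mathcal{D}_l$.

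Granting this, I would split into cases. If $B$ and $C$ lie in different components of $\mods \Lambda$, then $\varphi(B,C) = (C,B)$ by \cref{lem:disconnect_mutation_swapping}; since passing from $\mods \Lambda$ to $J(X)$ only shrinks the component $\mods \Lambda_{k_X}$ and leaves every other component untouched, $B$ and $C$ still lie in different components of $J(X)$, so \cref{lem:disconnect_mutation_swapping} again gives $\varphi^{J(X)}(B,C) = (C,B)$, and the two agree. If instead $B$ and $C$ lie in the same component $\mods \Lambda_{k_0}$, there are two sub-cases. When $k_X \neq k_0$, the component $\mods \Lambda_{k_0}$ appears unchanged in $J(X)$, so by the first step both $\varphi(B,C)$ and $\varphi^{J(X)}(B,C)$ equal $\varphi^{\mods \Lambda_{k_0}}(B,C)$. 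When $k_X = k_0$, both $B$ and $C$ lie in $J_{\mods \Lambda_{k_0}}(X)$, and the first step gives $\varphi(B,C) = \varphi^{\mods \Lambda_{k_0}}(B,C)$ and $\varphi^{J(X)}(B,C) = \varphi^{J_{\mods \Lambda_{k_0}}(X)}(B,C)$; equality of these last two is precisely Property M2 for the connected algebra $\Lambda_{k_0}$, which holds by \cref{prop:hereditary_properties_mutation_braid_group} if $\Lambda_{k_0}$ is of the form $A_{a_{k_0}}$ and by \cref{prop:property_2_cn} if it is of the form $C_{c_{k_0}}$.

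The main obstacle is the first step: one has to check that every ingredient in the definition of $\varphi$ — including the irregular case, which the paper only sketched — genuinely respects the product decomposition of the ambient category, and in particular that the uniquely determined new first coordinate cannot "jump" to another component. Once that componentwise statement is in hand, the case analysis above is routine bookkeeping, and combined with \cref{prop:property_1_n} it feeds \cref{prop:properties_mutation_braid_group} to yield the braid group relations for all of $\mathcal{N}$.
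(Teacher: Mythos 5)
Your proposal follows essentially the same route as the paper: reduce to the case where $B$, $C$, and $X$ all lie in one connected component (handling the other configurations via \cref{lem:disconnect_mutation_swapping} and the observation that a reduction in a different component does not affect the mutation), and then invoke \cref{prop:hereditary_properties_mutation_braid_group} for the $A_n$ components and \cref{prop:property_2_cn} for the $C_n$ components. The only difference is that you isolate and partially justify the componentwise-computation step explicitly, whereas the paper asserts it in one line; this is a refinement of, not a departure from, the paper's argument.
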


\begin{proof}
    Let $\Lambda$ be an algebra in $\mathcal{N}$ and let $X$ be indecomposable $\tau$-rigid over $\Lambda$.

    We can only have $\varphi^{J(X)}(B,C) \neq \varphi(B,C)$ if $B$, $C$ and $X$ all lie in the same component of $\mods \Lambda$: If $B$ and $C$ lie in different components, then $\varphi(B,C) = (C,B) = \varphi^{J(X)}(B,C)$. If $B$ and $C$ lie in the same component but $X$ lies in a different component then considering the reduction $J(X)$ does not influence the mutation so $\varphi(B,C) = \varphi^{J(X)}(B,C)$.
    
    Assume therefore that $B$, $C$ and $X$ lie in the same component $\mathcal{C}$ of $\mods \Lambda$. Either we have $\mathcal{C} = \mods C_i$ or $\mathcal{C} = \mods A_i$ for some integer $i$. In the first case, \cref{prop:property_2_cn} shows that $\varphi^{J(X)}(B,C) = \varphi(B,C)$. In the second case, we are in a hereditary algebra where the property always holds by \cref{prop:hereditary_properties_mutation_braid_group}.
\end{proof}

\begin{theorem}\label{thm:braid_group_relations_satisfied_on_cn}
    For any algebra in $\mathcal{N}$, in particular for any cyclic Nakayama algebra of the form $C_n =\overrightarrow{\Delta}_n/r^n$, mutation of complete $\tau$-exceptional sequences respects the braid group relations on $n$ strands.
\end{theorem}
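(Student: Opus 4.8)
The plan is to assemble the machinery already developed rather than to argue from scratch: the theorem is the payoff of the framework set up in \cref{prop:properties_mutation_braid_group}. That proposition states that if $\mathcal{A}$ is a class of algebras closed under Jasso reductions for which properties M1 and M2 hold, then mutation of complete $\tau$-exceptional sequences respects the braid group relations for every algebra in $\mathcal{A}$. So the strategy is simply to check that $\mathcal{A} = \mathcal{N}$ satisfies all three hypotheses and then invoke \cref{prop:properties_mutation_braid_group}.

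First I would recall that $\mathcal{N}$ is closed under Jasso reductions, which is exactly the content of the Definition-Proposition preceding \cref{prop:nakayama_cn_properties}; its proof reduces via \cref{lem:closed_under_jasso_when_closed_under_indec_jasso} to the connected case and then uses \cite{MustafaObaid2013} for the $A_n$-factors and \cite{Msapato2022} for the $C_n$-factors. Next I would cite \cref{prop:property_1_n}, which establishes property M1 for every $\Lambda \in \mathcal{N}$: if $B$ and $C$ sit in different components one uses \cref{lem:disconnect_mutation_swapping}, and otherwise the component is of type $A_n$ (handled by the hereditary case, \cref{prop:hereditary_properties_mutation_braid_group}) or of type $C_n$ (handled by \cref{prop:property_1_c_n}). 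Finally I would cite \cref{prop:property_2_N}, which establishes property M2 for every $\Lambda \in \mathcal{N}$ by the same component analysis, the $C_n$-case being the substantial \cref{prop:property_2_cn}.

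With these three inputs in hand, \cref{prop:properties_mutation_braid_group} applies verbatim to $\mathcal{N}$ and yields that mutation of complete $\tau$-exceptional sequences respects the braid group relations on $n$ strands for every algebra in $\mathcal{N}$. To conclude the ``in particular'' clause, I would note that $C_n$ itself lies in $\mathcal{N}$: take $i=0$ and $j=1$ with $c_1 = n$ in the defining product $A_{a_1}\times\dots\times A_{a_i}\times C_{c_1}\times\dots\times C_{c_j}$.

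There is no real obstacle remaining at this stage; the technical heart of the argument has already been dispatched in the earlier propositions. If one wanted to identify the genuinely hard step in the whole development, it is \cref{lem:tau_rigid_in_J(x)_iff_tau_rigid} — the fact that $\tau_{J(X)}$-rigidity in a Jasso reduction of $C_n$ coincides with honest $\tau$-rigidity in $\mods C_n$ — since this is what makes the case-by-case comparison of \cref{thm:bkt_tf_formula} across $J(X)$ and $\mods C_n$ in \cref{prop:property_2_cn} go through; but that work is already complete, so the proof of the theorem itself is a one-line application of \cref{prop:properties_mutation_braid_group}.
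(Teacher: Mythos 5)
Your proposal is correct and matches the paper's own proof exactly: both simply combine the closure of $\mathcal{N}$ under Jasso reductions with \cref{prop:property_1_n} and \cref{prop:property_2_N} to invoke \cref{prop:properties_mutation_braid_group}. The extra remark that $C_n$ itself lies in $\mathcal{N}$ is a harmless (and correct) addition.
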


\begin{proof}
    Since $\mathcal{N}$ is closed under Jasso reductions, \cref{prop:property_1_n} and \cref{prop:property_2_N} allows us to apply \cref{prop:properties_mutation_braid_group} showing that the braid group relations are satisfied. 
\end{proof}

\section{Braid group relations for Nakayama algebras having $C_n$ as a quotient}\label{sec:longerprojs}
In this section we extend \cref{thm:braid_group_relations_satisfied_on_cn} to cyclic Nakayama algebras, all of whose indecomposable projective modules are of length at least $n$ by showing the following result. 

\begin{theorem}\label{thm:longerlengths}
    Let $\Lambda_1$ and $\Lambda_2$ be two basic and connected Nakayama algebras with $|\Lambda_1|=|\Lambda_2|$ such that $\ell(P) \geq |\Lambda_i| \leq \ell(Q)$ for all indecomposable projective $\Lambda_1$-modules $P$ and indecomposable projective $\Lambda_2$-modules $Q$. Then there is a bijection $\xi$ between $\tau$-exceptional sequences in $\mods \Lambda_1$ and $\tau$-exceptional sequences in $\mods \Lambda_2$. Moreover, this bijection is compatible with the mutation of complete $\tau$-exceptional sequences. More precisely, we have
    \[ \xi(\varphi_i(\mathcal{M})) = \varphi_i(\xi(\mathcal{M}))\]
    for all complete $\tau$-exceptional sequence $\mathcal{M}$ in $\mods \Lambda_1$ and all $1 \leq i < |\Lambda_1|$. Consequently, the mutation of $\tau$-exceptional sequences in $\mods \Lambda_1$ and in $\mods \Lambda_2$ satisfies the braid group relations.
\end{theorem}

Recall that any basic Nakayama algebra $\Lambda$ with $n$ isomorphism classes of simple modules is determined up to isomorphism by the lengths of its indecomposable projective modules, known as its Kupisch series. 

\begin{setting}\label{set:sec5}
    Let $\Lambda_1$ and $\Lambda_2$ be two basic and connected Nakayama algebras with $n \coloneqq |\Lambda_1|=|\Lambda_2|$. Assume that they are defined via their Kupisch series as follows: 
    \[ \Lambda_1 = \mathcal{N}(m_0, m_1, \dots, m_{n-1}), \quad \text{and} \quad \Lambda_2 = \mathcal{N}(m_0', m_1', \dots, m_{n-1}'), \]
    that is, both have $n \geq 1$ isomorphism classes of simple modules and their $i$-th indecomposable projective modules have length $m_i$ and $m_i'$ respectively. Assume furthermore that $m_i = m_i' \geq n$ for $i \geq 1$ and $m_0' = m_0+1 \geq n+1$. 
\end{setting}

In other words, all indecomposable projective modules of the two algebras in \cref{set:sec5} have length at least $n$, and differ only in the length of the projective at vertex 0. One easily checks that in order to prove \cref{thm:longerlengths} it is sufficient to consider two algebras satisfying \cref{set:sec5}. Such algebras satisfy:
\[ \ind(\mods \Lambda_2) = \ind(\mods \Lambda_1) \cup \{ P_{\Lambda_2}(0)\}, \quad\text{and} \quad \ind (\taur \Lambda_2) = (\ind (\taur \Lambda_1) \setminus \{ P_{\Lambda_1}(0)\}) \cup \{ P_{\Lambda_2}(0)\}. \]

In this case, the module $P_{\Lambda_2}(0)$ is projective-injective in $\mods \Lambda_2$ and this approach is similar to Drozd--Kirichenko rejection \cite{DrozdKirichenko} as applied to Nakayama algebras in \cite[Sec. 3]{Adachi2016}. We collect some preliminary results.

\begin{lemma}\label{lem:easylemma}
    Let $\Lambda_1$ and $\Lambda$ be as in \cref{set:sec5}. Let $X \neq P_{\Lambda_1}(0)$ be an indecomposable $\tau$-rigid module in $\mods \Lambda_1$, which we may also view as an indecomposable $\tau$-rigid module in $\mods \Lambda_2$. Then:
    \begin{enumerate}
        \item $P_{\Lambda_1}(0) \oplus X$ is $\tau$-rigid in $\mods \Lambda_1$ if and only if $P_{\Lambda_2}(0) \oplus X$ is $\tau$-rigid in $\mods \Lambda_2$;
        \item $X \oplus P_{\Lambda_1}(0)$ is TF-ordered in $\mods \Lambda_1$ if and only if $X \oplus P_{\Lambda_2}(0)$ is TF-ordered in $\mods \Lambda_2$;
        \item $\Hom_{\Lambda_1}(P_{\Lambda_1}(0), X)= 0$ if and only if $\Hom_{\Lambda_2}(P_{\Lambda_2}(0), X)= 0$;
        \item $f_{P_{\Lambda_1}(0)}^{\Lambda_1} X= f_{P_{\Lambda_2}(0)}^{\Lambda_2} X$;
        \item $X \in \Gen^{\Lambda_1} P_{\Lambda_1}(0)$ if and only if $X \in \Gen^{\Lambda_2} P_{\Lambda_2}(0)$ and in this case $X \cong  P_{\Lambda_1}(0)/ \rad^j (P_{\Lambda_1}(0))$ if and only if $X \cong  P_{\Lambda_2}(0)/ \rad^j (P_{\Lambda_2}(0))$;
    \end{enumerate}
\end{lemma}
\begin{proof}
    \begin{enumerate}
        \item The module $P_{\Lambda_i}(0) \oplus X$ is $\tau$-rigid in $\mods \Lambda_i$ if and only if $X$ is $\tau$-rigid in $\mods \Lambda_i$ and $\Hom_{\Lambda_i}(P_{\Lambda_i}(0), \tau_{\Lambda_i} X) = 0$ for $i=1,2$. Since $\tau_{\Lambda_1} X = \tau_{\Lambda_2} X$ as quiver representations, they either both contain $0$ as a composition factor (equivalently are supported at vertex 0) or not. By \cite[Lem. III.2.11, Cor. III.3.6]{assem_skowronski_simson_2006}, the result follows.
        \item $X$ is generated by $P_{\Lambda_i}(0)$ in $\mods \Lambda_i$ if and only if $\topp(X) \cong S(0)$, by the uniseriality of indecomposable modules. Because $\topp(X)$ is independent of the ambient category, the result follows. 
        \item Either $X$ contains $0$ as a composition factor (equivalently is supported at vertex 0) or not, regardless of the ambient category. By \cite[Lem. III.2.11, Cor. III.3.6]{assem_skowronski_simson_2006}, the result follows.
        \item Since $P_{\Lambda_2}(0)$ is projective, the image of any morphism $f: P_{\Lambda_2}(0) \to X$ is a proper quotient of $P_{\Lambda_2}(0)$. By uniseriality, it is also a quotient module of $(P_{\Lambda_2}(0))/\soc(P_{\Lambda_2}(0)) = P_{\Lambda_1}(0)$. The result follows.
        \item The first part follows from (2). In this case, it follows from (the proof of) \cite[Thm. V.3.5]{assem_skowronski_simson_2006} that $j= \ell(X)$ gives the desired equality. 
    \end{enumerate}
    This concludes the proof.
\end{proof}

There exists a bijection between complete $\tau$-exceptional sequences of any two $\tau$-tilting finite algebras $\Gamma_1$ and $\Gamma_2$ such that $\stt \Gamma_1  \cong \stt \Gamma_2$ is an isomorphism of posets by \cite[Thm. D]{BarnardHanson2022exc}. By \cite[Thm. 3.11]{Adachi2016}, this holds for Nakayama algebras $\Lambda_1$ and $\Lambda_2$ as in the setting of \cref{lem:easylemma}. Thus, \cite[Thm. 5.1]{mendozatreffinger_stratifyingsystems} implies a slightly weaker version of the following lemma. 
\begin{lemma}\label{lem:bijTFtt}
    Let $\Lambda_1$ and $\Lambda_2$ be as in \cref{set:sec5}. Then there exists a bijection 
    \[ \Xi: \{ \tfo{\Lambda_1} \} \to \{ \tfo{\Lambda_2} \} \]
    given by replacing every occurrence of $P_{\Lambda_1}(0)$ by $P_{\Lambda_2}(0)$ and leaving the other terms unchanged.
\end{lemma}
\begin{proof}
    Since $\mods \Lambda_1 \subseteq \mods \Lambda_2$, every $\Lambda_1$-module can be viewed as a $\Lambda_2$-module. The uniseriality of indecomposable $\Lambda_i$-modules implies that it is sufficient to check whether a $\Lambda_i$-module $X$ generates an indecomposable $\Lambda_i$ module $Y$ by checking whether any indecomposable direct summand of $X$ generates $Y$ for $i=1,2$. Therefore \cref{lem:easylemma}(1) and \cref{lem:easylemma}(2) imply the desired result. 
\end{proof}

In combination with \cref{thm:MendozaTreffinger}, \cref{lem:bijTFtt} gives the desired bijection for \cref{thm:longerlengths}. We need the following result on $\tau$-tilting reduction of Nakayama algebras since the mutation of TF-ordered $\tau$-rigid modules with more than two indecomposable direct summands happens in a $\tau$-perpendicular subcategory.

\begin{lemma}\label{lem:longreduction}
    Let $\Lambda = \mathcal{N}(m_0, \dots, m_{n-1})$ be a basic Nakayama algebra such that $m_i \geq n$ for all $0 \leq i \leq n-1$ and some $n \geq 1$. Let $X$ be an indecomposable nonprojective $\Lambda$-module such that $P(0) \oplus X$ is $\tau$-rigid. Consider the $\tau$-tilting reduction $J(X)$ of $\mods \Lambda$ with respect to $X$. The (relative) projective object $f_X(P(0))$ in $J(X)$ lies in a connected component of $J(X)$ which is isomorphic to $\mods \Gamma$ for a Nakayama algebra $\Gamma$ all of whose indecomposable projectives are of length greater than or equal to $|\Gamma|$. 
\end{lemma}
\begin{proof}
    By \cite{jassoreduction}, there is an isomorphism of categories $J(X) \cong \End(\bcomp(X))/[X]$ and by \cite[Lem. 4.7]{bkt} the subcategory $J(X)$ is Morita equivalent to a (possibily disconnected) Nakayama algebra. From the description of  $\bcomp(X)$ in \cite[Prop. 4.9]{bkt}, we know that the (relative) projective modules of $J(X)$ are 
    \[ f_X(P(t)), \dots, f_X(P(t+n-\ell(X)-1)_n), \quad \text{and}\quad \rad^1 X, \dots, \rad^{\ell(X)-1} X, \] 
    where $t$ is such that $\topp(X) \cong S(t)$ and $(t+n-\ell(X)-1)_n$ means that the sum is taken modulo $n$. By \cite[Prop. 6.5]{bkt}, these two classes of projectives lie in disconnected components of $J(X)$. Since $P(0) \oplus X$ is $\tau$-rigid it follows that $f_X(P(0))$ is an element of  
    \[ \Pcal = \{f_X (P(j)): j \in t, \dots, (t+n-\ell(X)-1)_n\}.\]
    It remains to show that each element of $\{P(t), \dots, P(t+n-\ell(X)-1)\}$ has a nonzero morphism, which does not factor through $X$, to all other elements. This implies that $\Pcal$ generates a connected component of $J(X)$ with $|n- \ell(X)|$ isomorphism classes of simples and moreover that the indecomposable projective objects $\Pcal$ of $J(X)$ all have length at least $|n- \ell(X)|$, concluding the proof. Since the lengths of all projective $\Lambda$-modules are greater than or equal to $|\Lambda|$, there are nonzero left $\add(P(j))$-approximations $f_{ij}: P(i) \to P(j)$ for any $i,j \in \{ t, \dots, (t+n-\ell(X)-1)_n\}$. Note that $\Hom(P(i),X)\neq 0$ if and only if $i = t$ for those values of $i$, see \cite[Prop. 4.6(10)]{bkt}. Consequently $f_{ij}$ does not factor through $X$ for $i\neq t$. For $i = t$, the left $\add(P(t+a))$-approximation $f_a: P(t) \to P(t+a)$, with $a \in \{1, \dots, n-\ell(X)-1\}$, factors through $\rad^a P(t+a)$. We have
    \begin{align*} 
    \ell(\rad^a P(t+a)) &= \ell(P(t+a)) - a \\
    & \geq n -a &\text{ (since $\ell(P(t+a))\geq n$)}\\
    & \geq n- (n-\ell(X)-1) &\text{ (since $a \leq n-\ell(X)-1$)} \\
    & \geq \ell(X)+1.
    \end{align*}
    Therefore the morphism $f_a: P(t) \to P(t+a)$ does not factor through $X$. It follows that $f_X(P(0))$ lies in a connected component of $J(X)$ which is isomorphic to a cyclic Nakayama algebra $\Gamma$ all of whose indecomposable projectives have length at least $|\Gamma|$. 
\end{proof}

Two modules lying in different connected components of a disconnected algebra are always $\Hom$-orthogonal and $\Ext$-orthogonal to each other. Moreover, if they are both $\tau$-rigid, then their direct sum is $\tau$-rigid. Using this idea, \cref{lem:longreduction} allows us to argue that we remain in \cref{set:sec5} when performing $\tau$-tilting reduction in later proofs. For example, we apply it to obtain the following extension of \cref{lem:bijTFtt}.

\begin{lemma}\label{lem:reducedbijection}
    Let $\Lambda_1$ and $\Lambda_2$ be as in \cref{set:sec5}. Let $X \in \mods \Lambda_1$ be indecomposable $\tau$-rigid. Then the bijection $\Xi$ of \cref{lem:bijTFtt} induces a bijection
    \[ \Xi^{X}: \{ \tfo{(J^{\Lambda_1}(X))}\} \to \{ \tfo{(J^{\Lambda_2}(\Xi(X)))}\}\]
    given by replacing every occurrence of $f_X^{\Lambda_1} (P_{\Lambda_1}(0))$ by $f_{X}^{\Lambda_2}(P_{\Lambda_2}(0))$.
\end{lemma}
\begin{proof}
    If $X \cong P_{\Lambda_1}(0)$, it follows immediately from \cref{lem:easylemma}(3) that $J^{\Lambda_1}(X) = J^{\Lambda_2}(\Xi(X))$ and the bijection is trivial. 

    If $X$ is isomorphic to another indecomposable projective, then $\Hom_{\Lambda_2}(X,P_{\Lambda_2}(0)) \neq 0$ since $\ell(P_{\Lambda_2}(0)) \geq n$. It follows that $J^{\Lambda_1}(X) = J^{\Lambda_2}(X) = J^{\Lambda_2}(\Xi(X))$ because $P_{\Lambda_2}(0) \not \in J^{\Lambda_2}(X)$ and the bijection is again trivial. 
    
    If $X$ is not projective and $P_{\Lambda_1}(0) \oplus X$ is not $\tau$-rigid, then $P_{\Lambda_2}(0) \oplus X$ is not $\tau$-rigid by \cref{lem:easylemma}(1). This means that $\Hom(P_{\Lambda_2}(0), \tau X) \neq 0$ and consequently, $J^{\Lambda_1}(X) = J^{\Lambda_2}(X) = J^{\Lambda_2}(\Xi(X))$ because $P_{\Lambda_2}(0) \not \in J^{\Lambda_2}(X)$. The bijection is again trivial. If $X$ is not projective and $P_{\Lambda_2}(0) \oplus X$ is $\tau$-rigid, then we have 
    \begin{equation}\label{eq:jassosplit} J^{\Lambda_1}(X) \cong \mods \Gamma_1 \times \mods \Gamma' \text{ and } J^{\Lambda_2}(\Xi(X)) \cong \mods \Gamma_2 \times \mods \Gamma' \end{equation}
    where $\Gamma_1$ and $\Gamma_2$ satisfy \cref{set:sec5}, by \cref{lem:longreduction}. Moreover, by the same lemma $f_X^{\Lambda_1}(P_{\Lambda_1}(0))$ lies in the component isomorphic to $\mods \Gamma_1$, whereas $f_X^{\Lambda_2}(P_{\Lambda_2}(0))$ lies in the component isomorphic to $\mods \Gamma_2$. The result then follows from applying \cref{lem:bijTFtt} to $\Gamma_1$ and $\Gamma_2$ and combining it with the identity bijection of TF-ordered $\tau$-rigid $\Gamma'$-modules.
\end{proof}

Using \cite[Thm. 4.3]{BuanMarsh2018}, we can extend this bijection additively to a bijection
\[ \Xi^{X \oplus Y} : \{\tfo{(J^{\Lambda_1}(X \oplus Y))}\} \to \{\tfo{(J^{\Lambda_2}(\Xi(X \oplus Y)))}\} \]
for any TF-ordered $\tau$-rigid module $X \oplus Y$.

\begin{remark}
    Let $X$ be $\tau$-rigid. The functor $f_X(-)$ gives a bijection between indecomposable modules $Y \in \mods \Lambda$ such that $Y \oplus X$ is TF-ordered $\tau$-rigid and indecomposable $\tau$-rigid modules in $J(X)$ by \cite[Prop. 4.5]{BuanMarsh2018}. For an indecomposable $\tau$-rigid module $N \in J(X)$ we write $f_X^{-1}(N)$ for the inverse image of this bijection.  
\end{remark}

\begin{lemma}\label{lem:reductionbijcommutes}
    The bijection of \cref{lem:reducedbijection} satisfies
    \[ \Xi^X ( f_X^{\Lambda_1}(Z)) = f_{\Xi(X)}^{\Lambda_2}( \Xi(Z)) \]
    for every indecomposable $Z \in \mods \Lambda_1$ such that $Z \oplus X$ is TF-ordered $\tau$-rigid. Moreover, the bijection satisfies
    \[ \Xi ( (f_X^{\Lambda_1})^{-1}(Z')) = (f_{\Xi(X)}^{\Lambda_2})^{-1} ( \Xi^{X}(Z')) \]
    for every indecomposable $\tau$-rigid object $Z'$ in $J^{\Lambda_1}(X)$.
\end{lemma}
\begin{proof}
    If $Z \not \cong P_{\Lambda_1}(0)$, then trivially $\Xi(Z)=Z$ and $\Xi^X(f_X^{\Lambda_1}(Z)) = f_X^{\Lambda_1}(Z)$ hold. Therefore 
    \[ f_{\Xi(X)}^{\Lambda_2}(\Xi(Z)) = f_{\Xi(X)}^{\Lambda_2}(Z) = f_X^{\Lambda_1}(Z) = \Xi^X(f_X^{\Lambda_1}(Z)),\]
    where the second equality follows from \cref{lem:easylemma}(4).

    If $Z \cong P_{\Lambda_1}(0)$, then $X \not \cong P_{\Lambda_1}(0)$ so that $\Xi(X)=X$ and thus
    \[ f_{\Xi(X)}^{\Lambda_2}(\Xi(Z)) = f_X^{\Lambda_2}(P_{\Lambda_2}(0)) = \Xi^X(f_X^{\Lambda_1}(P_{\Lambda_1}(0))). \]

    We may assume that $Z' = f_X(Y)$ for some indecomposable $Y \in \mods \Lambda_1$ such that $Y \oplus X$ is TF-ordered $\tau$-rigid by \cite[Prop. 4.5]{tauexceptional_buanmarsh}. If $Y \not \cong P_{\Lambda_1}(0)$, then $\Xi(Y) = Y$ and $\Xi^X(Z') = Z'$ hold. Thus
    \begin{align*} (f_{\Xi(X)}^{\Lambda_2})^{-1}( \Xi^X(Z')) & = ( f_{\Xi(X)}^{\Lambda_2})^{-1}(Z') \\
    & = ( f_{\Xi(X)}^{\Lambda_2})^{-1}(f_X^{\Lambda_1}(Y)) \\
    &= (f_{\Xi(X)}^{\Lambda_2})^{-1} (f_{\Xi(X)}^{\Lambda_2}(Y))  & \text{(by \cref{lem:easylemma}(2))}\\
    &= Y \\
    & = \Xi(Y) \\
    &= \Xi((f_{X}^{\Lambda_1})^{-1}( f_X^{\Lambda_1}(Y))) \\
    &= \Xi((f_X^{\Lambda_1})^{-1}(Z'))\end{align*}
    If $Z' \cong f_X^{\Lambda_1} (P_{\Lambda_1}(0))$, then $X \not \cong P_{\Lambda_1}(0)$ so that $\Xi(X)=X$ and we have
    \[ (f_{\Xi(X)}^{\Lambda_2})^{-1} ( \Xi^X(Z')) = (f_X^{\Lambda_2})^{-1} ( f_X^{\Lambda_2}(P_{\Lambda_2}(0))) = P_{\Lambda_2}(0) = \Xi( (f_X^{\Lambda_1})^{-1} (f_X^{\Lambda_1}(P_{\Lambda_1}(0)))) = \Xi((f_X^{\Lambda_1})^{-1}(Z')). \]
    This concludes the proof.
\end{proof}

In the proof of the following lemma we rely on \cite[Thm. 4.3]{BuanMarsh2018} to extend \cref{lem:reductionbijcommutes} to the following setting.
\begin{lemma}\label{lem:reductionstep}
    Let $\Lambda_1$ and $\Lambda_2$ be as in \cref{set:sec5} and let $Z \oplus X \oplus Y$ be TF-ordered $\tau$-rigid in $\mods \Lambda_1$. Then
    \[ \Xi( (f_{\overline{\varphi}^{\Lambda_1}(X \oplus Y)}^{\Lambda_1})^{-1} ( f_{X \oplus Y}^{\Lambda_1}(Z))) = (f_{\Xi(\overline{\varphi}^{\Lambda_1}(X \oplus Y))}^{\Lambda_2})^{-1} ( f_{\Xi(X \oplus Y)}^{\Lambda_2}(\Xi(Z))).\]
\end{lemma}
\begin{proof}
    The result follows from the sequence of equalities
    \begin{align*}
        (f_{\Xi(\overline{\varphi}^{\Lambda_1}(X \oplus Y))}^{\Lambda_2})^{-1} ( f_{\Xi(X \oplus Y)}^{\Lambda_2}(\Xi(Z))) &= (f_{\Xi(\overline{\varphi}^{\Lambda_1}(X \oplus Y))}^{\Lambda_2})^{-1} ( \Xi^{X \oplus Y}(f_{X \oplus Y}^{\Lambda_1}(Z))) &\text{(by \cref{lem:reductionbijcommutes})} \\
        &= (f_{\Xi(\overline{\varphi}^{\Lambda_1}(X \oplus Y))}^{\Lambda_2})^{-1} ( \Xi^{\overline{\varphi}^{\Lambda_1}(X \oplus Y)}(f_{X \oplus Y}^{\Lambda_1}(Z))), \\
        &= \Xi((f_{\overline{\varphi}^{\Lambda_1}(X \oplus Y)}^{\Lambda_1})^{-1} ( f_{X \oplus Y}^{\Lambda_1}(Z))) &\text{(by \cref{lem:reductionbijcommutes})}
    \end{align*}
    where the second equality follows from $\Xi^{X \oplus Y} = \Xi^{\overline{\varphi}^{\Lambda_1}(X \oplus Y)}$. These bijections are equivalent because $J^{\Lambda_1}(X \oplus Y) = J^{\Lambda_1}(\overline{\varphi}^{\Lambda_1}(X \oplus Y))$ by \cite[Def.-Prop. 4.3 and 4.4]{BHM2024} and \cite[Thm. 6.4]{buan2023perpendicular}.
\end{proof}

The following two lemmas illustrate that the mutation of TF-ordered modules with two indecomposable direct summands (and equivalently of $\tau$-exceptional pairs) coincide for two Nakayama algebras $\Lambda_1$ and $\Lambda_2$ as in \cref{set:sec5}. In other words, up to replacing every occurrence of $P_{\Lambda_1}(0)$ by $P_{\Lambda_2}(0)$, the mutation of $\tau$-exceptional pairs coincide in $\mods \Lambda_1$ and $\mods \Lambda_2$. 

\begin{lemma}\label{lem:mutcase1}
    Let $\Lambda_1$, $\Lambda_2$ be as in \cref{set:sec5}. Let $X \neq P_{\Lambda_1}(0)$ be an indecomposable $\tau$-rigid module in $\mods \Lambda_1$ such that $P_{\Lambda_1}(0) \oplus X$ is $\tau$-rigid. As TF-ordered $\tau$-rigid modules, the module $P_{\Lambda_1}(0) \oplus X$ falls into Case TF-\textnormal{?} in $\mods \Lambda_1$ if and only if the module $P_{\Lambda_2}(0) \oplus X$ falls into Case TF-\textnormal{?} in $\mods \Lambda_2$, where $? \in \{1b, 2b, 3\}$ and they both do not fall into any other cases. Moreover, in Case TF-1b, we have
    \[ \overline{\varphi}^{\Lambda_1}( P_{\Lambda_1}(0) \oplus X) = P_{\Lambda_1}(0) \oplus Y, \quad \text{and} \quad \overline{\varphi}^{\Lambda_2}(P_{\Lambda_2}(0) \oplus X) = P_{\Lambda_2}(0) \oplus Y, \]
    where $Y = f_X^{\Lambda_1}(P_{\Lambda_1}(0)) = f_X^{\Lambda_2}(P_{\Lambda_2}(0))$ by \cref{lem:easylemma}(5). In Case TF-2b, we have
    \[ \overline{\varphi}^{\Lambda_1}( P_{\Lambda_1}(0) \oplus X) = Y' \oplus P_{\Lambda_1}(0) , \quad \text{and} \quad \overline{\varphi}^{\Lambda_2}(P_{\Lambda_2}(0) \oplus X) = Y' \oplus P_{\Lambda_2}(0), \]
    where $X \cong (P_{\Lambda_i}(0))/\rad^j(P_{\Lambda_i}(0))$ for $i=1,2$ by \cref{lem:easylemma}(5) and $Y' = P_{\Lambda_i}(\rad^j(P_{\Lambda_i}(0)))$ is the projective cover of $\rad^j(P_{\Lambda_i}(0))$ in $\mods \Lambda_i$ which coincide because $\ell(X) < n$ implies that $Y' \not \cong P_{\Lambda_i}(0)$. Finally, in case TF-3, we have
    \[ \overline{\varphi}^{\Lambda_1}( P_{\Lambda_1}(0) \oplus X) = X \oplus P_{\Lambda_1}(0), \quad \text{and} \quad \overline{\varphi}^{\Lambda_2}( P_{\Lambda_2}(0) \oplus X) = X \oplus P_{\Lambda_2}(0). \]
\end{lemma}
\begin{proof}
    First of all note that $P_{\Lambda_1}(0) \oplus X$ is a $\tau$-rigid module in $\mods \Lambda_1$ if and only if $P_{\Lambda_2}(0) \oplus X$ is a $\tau$-rigid module in $\mods \Lambda_2$ by \cref{lem:easylemma}(1). They are then immediately TF-ordered since $P_{\Lambda_i}(0)$ is projective in $\mods \Lambda_i$.  Furthermore, note that $X$ is projective in $\mods \Lambda_1$ if and only if it projective in $\mods \Lambda_2$. If $X$ is projective, then $\ell(X) \geq n$ and we have $\Hom_{\Lambda_i}(X, P_{\Lambda_i}(0)) \neq 0$ for $i =1,2$. Consequently, $P_{\Lambda_i}(0) \oplus X$ falls into Case TF-1b and never falls into Case TF-1a in $\mods \Lambda_i$. 
    
    If $X$ is not projective in $\mods \Lambda_i$, then since $P_{\Lambda_i}(0)$ is projective in $\mods \Lambda_i$ the TF-ordered module $P_{\Lambda_i}(0) \oplus X$ may not fall into Case TF-2a. Similarly since $P_{\Lambda_i}(0)$ is projective, there is an isomorphism $\bcomp(P_{\Lambda_i}(0)) \cong {}_{\Lambda_i}\Lambda_i$. Hence, the Bongartz completion has no nonprojective direct summands, whence $P_{\Lambda_i}(0) \oplus X$ does not fall into Case TF-4. 

    By \cref{lem:easylemma}(5) if either $P_{\Lambda_1}(0) \oplus X$ or $P_{\Lambda_2}(0) \oplus X$ falls into Case TF-2b, so does the other. Moreover, since $\ell(P_{\Lambda_1}(0)) \geq n$, we have that $\ccomp(P_{\Lambda_1}(0)) = \ccomp(P_{\Lambda_2}(0))$ by the description of \cite[Prop. 4.11]{bkt}. We thus have $X \in \add(\ccomp(P_{\Lambda_1}(0)))$ if and only if $X \in \add(\ccomp(P_{\Lambda_2}(0)))$, hence $P_{\Lambda_1}(0) \oplus X$ falls into Case TF-3 in $\mods \Lambda_1$ if and only if $P_{\Lambda_2}(0) \oplus X$ falls into Case TF-3 in $\mods \Lambda_2$. The result follows from \cref{thm:bkt_tf_formula}, using \cref{lem:easylemma} as indicated in the statement of this claim.
\end{proof}

\begin{lemma}\label{lem:mutcase2}
    Let $\Lambda_1$, $\Lambda_2$ and $X$ be as in the setting of \cref{lem:mutcase1}. If $X \oplus P_{\Lambda_1}(0)$ is TF-ordered $\tau$-rigid in $\mods \Lambda_1$, it falls into case TF-1x in $\mods \Lambda_1$ if and only if $X \oplus P_{\Lambda_2}(0)$ falls into Case TF-1x in $\mods \Lambda_2$ where $x \in \{a,b\}$ and they both do not fall into any other cases. Moreover, in Case TF-1a, we have
    \[ \overline{\varphi}^{\Lambda_1}(X \oplus P_{\Lambda_1}(0)) =  P_{\Lambda_1}(0) \oplus X, \quad \text{and} \quad \overline{\varphi}^{\Lambda_2}(X \oplus P_{\Lambda_2}(0)) =  P_{\Lambda_2}(0) \oplus X.\]
    In Case TF-1b, we have
    \[ \overline{\varphi}^{\Lambda_1}(X \oplus P_{\Lambda_1}(0)) =  X \oplus Y'', \quad \text{and} \quad \overline{\varphi}^{\Lambda_2}(X \oplus P_{\Lambda_2}(0)) =  X \oplus Y'',\]
    where $Y'' = f_{P_{\Lambda_1}(0)}^{\Lambda_1}(X) = f_{P_{\Lambda_2}(0)}^{\Lambda_2}(X)$ by \cref{lem:easylemma}(4).
\end{lemma}
\begin{proof}
   Note that by \cref{lem:easylemma}(1,2), $X \oplus P_{\Lambda_1}(0)$ is TF-ordered $\tau$-rigid in $\mods \Lambda_1$ if and only if $X \oplus P_{\Lambda_2}(0)$ is TF-ordered $\tau$-rigid in $\mods \Lambda_2$. Since $P_{\Lambda_i}(0)$ is projective in $\mods \Lambda_i$, the module $X \oplus P_{\Lambda_i}(0)$ falls into Case TF-1 in $\mods \Lambda_i$ for $i=1,2$. They both fall into the same subcase because of \cref{lem:easylemma}(3) and the result follows from \cref{thm:bkt_tf_formula}. 
\end{proof}

As a consequence we describe the interaction of the mutation of TF-ordered $\tau$-rigid modules with two indecomposable direct summands with the bijection $\Xi$.

\begin{proposition}\label{prop:mutationstep}
    Let $\Lambda_1$ and $\Lambda_2$ be as in \cref{set:sec5} and let $X \oplus Y$ be a TF-ordered $\tau$-rigid module in $\mods \Lambda_1$. Then
    \[ \Xi(\overline{\varphi}^{\Lambda_1}(X \oplus Y)) = \overline{\varphi}^{\Lambda_2}(\Xi(X \oplus Y)).\]
\end{proposition}
\begin{proof}
    If $X \cong P_{\Lambda_1}(0)$ this is an immediate consequence of \cref{lem:mutcase1} and if $Y \cong P_{\Lambda_1}(0)$ it is an immediate consequence of \cref{lem:mutcase2}. Otherwise, the descriptions of $\bcomp(X)$ and $\ccomp(X)$ given in \cite[Prop. 4.9, Prop. 4.11]{bkt} yield that $X \oplus Y$ falls into case Case TF-? in $\mods \Lambda_1$ if and only if $\Xi(X \oplus Y) = X \oplus Y$ falls into case TF-?  in $\mods \Lambda_2$, where $? \in \{1a, 1b, 2a, 2b, 3 , 4\}$. If they fall into Case TF-2b, then $Y \cong X/\rad^j X$ for some $1 \leq j \leq \ell(X)-1$ regardless of the ambient category. Then, \cref{thm:bkt_tf_formula} gives
    \[ \overline{\varphi}^{\Lambda_1}(X \oplus Y) \cong P_{\Lambda_1}(\rad^j X) \oplus X \quad \text{ and } \quad \overline{\varphi}^{\Lambda_2}(X \oplus Y) \cong P_{\Lambda_2}(\rad^j X) \oplus X.\]
    It is clear that $P_{\Lambda_1}(\rad^j X) \cong P_{\Lambda_1}(0)$ if and only if $ P_{\Lambda_2}(\rad^j X) \cong P_{\Lambda_2}(0)$. This implies that 
    \[ \Xi(\overline{\varphi}^{\Lambda_1}(X \oplus Y)) = \overline{\varphi}^{\Lambda_2}(\Xi(X \oplus Y))\]
    in Case TF-2b. The same equality holds for all other cases from the formulas of \cref{thm:bkt_tf_formula}, since then $\Xi$ is simply the full inclusion of $\mods \Lambda_1$ into $\mods \Lambda_2$.
\end{proof}

We have collected all necessary intermediate results to prove the main result of this section.

\begin{proof}[Proof of \cref{thm:longerlengths}]
    As discussed, it is sufficient to consider $\Lambda_1$ and $\Lambda_2$ in \cref{set:sec5}. By \cref{lem:bijTFtt}, there is a bijection $\Xi$ from TF-ordered $\tau$-rigid modules in $\mods \Lambda_1$ to TF-ordered $\tau$-rigid modules in $\mods \Lambda_2$ which induces a bijection between $\tau$-exceptional sequences by \cref{thm:MendozaTreffinger}. Moreover, this restricts to bijections between TF-ordered $\tau$-tilting modules and complete $\tau$-exceptional sequences. In order to prove the theorem, we need to establish the equality
    \begin{equation}\label{eq:desired} \overline{\varphi}_i^{\Lambda_2}(\Xi(M)) = \Xi(\overline{\varphi_i}^{\Lambda_1}(M))\end{equation}
    for all TF-ordered $\tau$-tilting modules $M$ and all indices $1 \leq i \leq n-1$. We begin with the case $i=n-1$ and obtain
    \begin{align*}
       \Xi(\overline{\varphi}_{n-1}^{\Lambda_1}(M)) &=  \Xi( (f_{\overline{\varphi}^{\Lambda_1}(M_{n-1} \oplus M_n)}^{\Lambda_1})^{-1}(f_{M_{n-1} \oplus M_n}^{\Lambda_1}(M_1 \oplus \dots \oplus M_{n-2})) \oplus \overline{\varphi}^{\Lambda_1}(M_{n-1} \oplus M_n)) \\
       & = \Xi( (f_{\overline{\varphi}^{\Lambda_1}(M_{n-1} \oplus M_n)}^{\Lambda_1})^{-1}(f_{M_{n-1} \oplus M_n}^{\Lambda_1}(M_1 \oplus \dots \oplus M_{n-2}))) \oplus \Xi(\overline{\varphi}^{\Lambda_1}(M_{n-1} \oplus M_n)) \\
       & = (f_{\Xi(\overline{\varphi}^{\Lambda_1}(M_{n-1} \oplus M_n))}^{\Lambda_2})^{-1}(f_{\Xi(M_{n-1} \oplus M_n)}^{\Lambda_2}(\Xi(M_1 \oplus \dots \oplus M_{n-2}))) \oplus \Xi(\overline{\varphi}^{\Lambda_1}(M_{n-1} \oplus M_n)) \\
       &= (f_{\overline{\varphi}^{\Lambda_2}(\Xi(M_{n-1} \oplus M_n))}^{\Lambda_2})^{-1} ( f_{\Xi(M_{n-1} \oplus M_{n})}^{\Lambda_2}(\Xi(M_1 \oplus \dots \oplus M_{n-2}))) \oplus \overline{\varphi}^{\Lambda_2}(\Xi(M_{n-1} \oplus M_{n})) \\
       &= \overline{\varphi}_{n-1}^{\Lambda_2}(\Xi(M)) 
    \end{align*}
    where the first and last equalities follow from \cite[Thm. 3.31]{bkt}, the second equality follows from the additivity of $\Xi$, the third equality follows from \cref{lem:reductionstep}, because the torsion-free functor is additive, and the fourth equality follows from \cref{prop:mutationstep}.

    For the case $i = n-2$ we work in the $\tau$-perpendicular subcategories $J^{\Lambda_1}(M_n)$ and $J^{\Lambda_2}(\Xi(M_n))$ by \cite[Prop. 3.32]{bkt}. By \cref{eq:jassosplit}, these may be split into two components, one of which they share and the other which falls into \cref{set:sec5}. It is clear that if either $M_{n-2}$ or $M_{n-1}$ lies in the shared component, then the result is trivial. If they both lie in the component which contains $f_{M_n}^{\Lambda_1}(P_{\Lambda_1}(0))$, then we are mutating at the right-most entry, as above. This then yields
    \begin{align*}
       \Xi(\overline{\varphi}_{n-2}^{\Lambda_1}(M))
       &=  \Xi( ( f_{M_n}^{\Lambda_1})^{-1} (\overline{\varphi}^{J^{\Lambda_1}(M_n)}_{(n-1)-1} (f_{M_n}^{\Lambda_1}(M_1 \oplus \dots \oplus M_{n-1})))) \oplus \Xi(M_n) \\
       & = (f_{\Xi(M_n)}^{\Lambda_2})^{–1} ( \Xi^{M_n}(  \overline{\varphi}^{J^{\Lambda_1}(M_n)}_{(n-1)-1}(f_{M_n}^{\Lambda_1}(M_1 \oplus \dots \oplus M_{n-1})))) \oplus \Xi(M_n) \\
       &= (f_{\Xi(M_n)}^{\Lambda_2})^{–1}  ( \overline{\varphi}^{J^{\Lambda_2}(\Xi(M_n))}_{(n-1)-1}(\Xi^{M_n}(f_{M_n}^{\Lambda_1}(M_1 \oplus \dots M_{n-1})))) \oplus \Xi(M_n) \\
       &= (f_{\Xi(M_n)}^{\Lambda_2})^{–1}  ( \overline{\varphi}^{J^{\Lambda_2}(\Xi(M_n))}_{(n-1)-1}(f_{\Xi(M_n)}^{\Lambda_2}(\Xi(M_1 \oplus \dots M_{n-1})))) \oplus \Xi(M_n) \\
       &= \overline{\varphi}_{n-2}^{\Lambda_2}(\Xi(M)).
    \end{align*}
    where the first and final equations follow from \cite[Prop. 3.32]{bkt} and the additivity of $\Xi$, the second and fourth equations follow from \cref{lem:reductionbijcommutes} and finally the third equation is the above ``right-most'' $i = |J^{\Lambda_1}(X)|-1 = (n-1)-1$ case in $J^{\Lambda_1}(X)$, using the fact that $\Xi^{M_n}$ is a bijection from $J^{\Lambda_1}(X)$ to $J^{\Lambda_2}(\Xi(X))$ by \cref{lem:reducedbijection}. We can now repeat this process to obtain the desired \cref{eq:desired} for all $1 \leq i \leq n-1$ which completes the proof. This shows that the mutation of TF-ordered $\tau$-tilting modules, and by \cite[Def. 3.28, Prop. 3.32]{bkt} also the mutation of complete $\tau$-exceptional sequences, in $\mods \Lambda_1$ and $\mods \Lambda_2$ coincide. In particular, since the mutation of complete $\tau$-exceptional sequences of $C_n = \mathcal{N}(n,\dots, n)$ satisfies the braid group relations by \cref{thm:braid_group_relations_satisfied_on_cn}, so do the complete $\tau$-exceptional sequences in $\mods \Lambda_1$ and in $\mods \Lambda_2$.
\end{proof}

\section{Counterexamples for the remaining non-hereditary cases}
The goal of this section is to show that the classes of Nakayama algebras we have found in the previous two sections are the only non-hereditary examples for which the mutation of complete $\tau$-exceptional sequences satisfies the braid group relations.

\begin{theorem}\label{thm:notethm1}
    Let $\Lambda$ be a basic and connected Nakayama algebra. Assume that there is an indecomposable projective $\Lambda$-module $P$ with $\ell(P) < |\Lambda|$. Then the mutation of $\tau$-exceptional sequences satisfies the braid group relation \textnormal{B2} if and only if $\Lambda$ is hereditary. 
\end{theorem}

By \cite[Thm. V.3.2]{assem_skowronski_simson_2006}, any basic and connected Nakayama algebra may be expressed as $\Lambda \cong KQ/I$ with $Q$ being either $\overrightarrow{\A}_n$ or $\overrightarrow{\Delta}_n$ for some $n \geq 1$ and with $I$ being an admissible ideal, see \cref{sec:cycliccase}. It is immediately clear that $\Lambda$ is finite dimensional and hereditary if and only if $\Lambda = k \overrightarrow{\A}_n$ and in this case the mutation of ($\tau$)-exceptional sequences in $\mods \Lambda$ satisfies the braid relations by \cite{cbw92,ringel_exceptional}, see also \cite[Thm. 0.5]{BHM2024}. 

Throughout the rest of this section, let $\Lambda \cong KQ/I$ denote a basic and connected Nakayama algebra. For $|\Lambda|=2$, there is nothing to show, thus assume $|\Lambda| \geq 3$. We label the vertices of $\Lambda$ from $0$ to $|\Lambda|-1$ as in \cref{sec:cycliccase}. If the underlying quiver of $\Lambda$ is cyclic, then all indices are taken modulo $|\Lambda|$. 

\begin{lemma}\label{lem:specialindex}
    Assume that $\Lambda$ satisfies the assumptions of \cref{thm:notethm1} and that $\Lambda$ is not hereditary. Then there exists an index $i \in \{0, \dots, |\Lambda|-1\}$ such that $\ell(P(i+1)) < |\Lambda|$ and $\ell(P(i)) \geq \ell(P(i+1))$.
\end{lemma}
\begin{proof}
    Let $n \coloneqq |\Lambda|$ and let $R \in \mods \Lambda$ be an indecomposable projective $\Lambda$-module such that $\ell(R) < n$, which exists by assumption.
    
    Assume first that $\Lambda \cong K \overrightarrow{\A}_n /I$. In this case it is clear that $\ell(P(i)) \leq i+1 \leq n$ for all indices $i \in \{0,\dots, n-1\}$. Moreover, if $\Lambda$ is not hereditary, it follows that $\ell(P(n-1)) < n$. Let $j$ be the least positive integer such that $\rad^j P(n-1)$ is nonzero and nonprojective. Such an integer $j$ exists since the module $P(n-1)$ has at most $n-2$ proper nonzero submodules, at least one of which has to be nonprojective. If they were all projective, then in particular, the simple $\soc(P(n-1))$ would be projective, but $\Lambda$ only has one simple projective, which is $P(0)$, since it is connected. However, $\ell(P(n-1))<n$ and $|\Lambda|>2$ imply that $P(0) \not \cong \soc( P(n-1))$. Thus, consider $P(n-j-1)$, which is the projective generating $\rad^j P(n-1)$. It clearly satisfies 
    \[ \ell(P(n-j-1)) \geq \ell(\rad^{j} P(n-1)) +1 =\ell(\rad^{j-1} P(n-1)) = \ell(P(n-j)) \]
    since $\rad^{j-1} P(n-1) = P(n-j)$ by the minimality of $j$. 
    Consequently, setting $i=n-j-1$ gives the desired index in this case. By construction $j \geq 1$, so that $i \leq n-2$.

    Assume now that $\Lambda \cong K \overrightarrow{\Delta}_n /I$ and let $h \in \{0,\dots, n-1\}$ be the index such that $R \cong P(h)$. Again, let $j$ be the least integer such that $\rad^j P(h)$ is nonzero and nonprojective. Such an integer $j$ exists since there are no simple projective $\Lambda$-modules. Like above, consider $P(h-j)$, which is the projective generating $\rad^j P(h)$. It clearly satisfies 
    \[ \ell(P(h-j)) \geq \ell(\rad^j P(h)) +1 = \ell(\rad^{j-1} P(h)) = \ell(P(h-j+1)).\]
    Thus, setting $i=h-j$ gives the desired index in this case. This completes the proof.
    \end{proof}

    We note that if $\Lambda \cong k\overrightarrow{\A}_n/I$ in \cref{lem:specialindex}, then by construction $i\in \{1, \dots, |\Lambda|-2\}$. Thus when choosing $i$ as in \cref{lem:specialindex}, the module $P(i+1)$ is a well-defined projective nonsimple $\Lambda$-module for both cyclic and linear Nakayama algebras. The following module plays a central role in the proof of \cref{thm:notethm1}.

    \begin{lemma}\label{lem:specialtaurigid}
        Assume the setting of \cref{lem:specialindex} and let $i \in \{0, \dots, |\Lambda|-1\}$ be an index such that $\ell(P(i+1)) < |\Lambda|$ and $\ell(P(i)) \geq \ell(P(i+1))$. Then the ordered $\Lambda$-module $P(i+1) \oplus P(i) \oplus \rad P(i+1)$ is a TF-ordered $\tau$-rigid module.
    \end{lemma}
    \begin{proof}
        Since $\ell(P(i)) \geq \ell(P(i+1))$, the module $P(i)$ is not a submodule of $P(i+1)$. This means $\rad P(i+1) \not \cong P(i)$, in fact $\rad P(i+1)$ is a proper quotient of $P(i)$. It is nonzero since $P(i+1)$ is not simple. Define 
        \[ M= P(i+1) \oplus P(i) \oplus \rad P(i+1).\]
        It is clear that $\tau M \cong \tau(\rad P(i+1))$. Moreover, since $\ell(\rad P(i+1)) < \ell(P(i+1)) < |\Lambda|$, it follows from \cite[Lem. 2.5]{Adachi2016} that $\rad P(i+1)$ is $\tau$-rigid. Because of the uniseriality of all indecomposable $\Lambda$-modules, it suffices to show that 
        \[ \Hom(P(i+1), \tau (\rad P(i+1))=0 = \Hom(P(i), \tau(\rad P(i+1))\]
        in order to conclude that $M$ is $\tau$-rigid. It is then clearly TF-ordered. The equalities above follow from the simple observation that 
        \[ \ell(\tau(\rad P(i+1)) = \ell(\rad P(i+1)) \leq |\Lambda|-2\]
        and since $\topp(\tau(\rad P(i+1))) \cong S(i-1)$ by \cite[Prop. 4.6(7)(8)]{bkt}, neither $S(i+1)$ nor $S(i)$ is a composition factor of $\tau(\rad P(i+1))$. This concludes the proof.
    \end{proof}

    We are now able to proof the main result of this section, completing the classification of \cref{thm:introthm4} in the introduction.
    
    \begin{proof}[Proof of \cref{thm:notethm1}]
        If $\Lambda$ is hereditary, then the mutation of $\tau$-exceptional sequences in $\mods \Lambda$ coincides with the mutation of exceptional sequences in $\mods \Lambda$ by \cite[Thm. 0.5]{BHM2024}. It thus satisfies the braid group relations by \cite{cbw92,ringel_exceptional}. Conversely, assume that $\Lambda$ is not hereditary. By \cref{lem:specialindex}, there exists an index $i \in \{0, \dots, |\Lambda|-1\}$ such that $\ell(P(i+1)) < |\Lambda|$ and such that $\ell(P(i)) \geq \ell(P(i+1))$. By \cref{lem:specialtaurigid}, we may consider the TF-ordered $\tau$-rigid module
        \[ M \cong P(i+1) \oplus P(i) \oplus \rad P(i+1).\]
        We will show that the mutation $\overline{\varphi}$ of TF-ordered $\tau$-rigid modules applied to the module above does not satisfy the braid group relation B2. By \cite[Def. 3.28, Prop. 3.32]{bkt} this implies that the mutation of $\tau$-exceptional sequences does not satisfy the braid group relation B2. We make use of the explicit description of $\overline{\varphi}$ of \cref{thm:bkt_tf_formula}.

        In the following, we use the existence of an isomorphism $\rad P(i+1) \cong P(i)/\rad^j P(i)$ for some $1 \leq j < \ell(P(i))$. In particular, $j \leq  \ell(P(i))-1 \leq n-2$ means that $P(i-j) \not \cong P(i+1)$ and $P(i-j) \not \cong P(i)$. Thus,
        \begin{equation}\label{eq:fail1}
        \begin{aligned}
            \overline{\varphi}_2 \circ \overline{\varphi}_1 \circ \overline{\varphi}_2(M) &= \overline{\varphi}_2 \circ \overline{\varphi}_1(P(i+1) \oplus P(i-j) \oplus P(i)) &\text{(TF-2b in $\mods \Lambda$)} \\
            &= \overline{\varphi}_2(P(i-j) \oplus P(i+1) \oplus P(i)) &\text{(TF-1a in $J(P(i))$} \\
            &= P(i-j) \oplus P(i+1) \oplus S(i+1) &\text{(TF-1b in $\mods \Lambda$)}.
        \end{aligned}
        \end{equation}
        In the above, the module $P(i-j)$ is the projective generating $\rad^j P(i)$. For the first equality of \cref{eq:fail1}, it is sufficient to observe that $\rad P(i+1) = t_{P(i)}P(i+1)$ is the maximal proper subobject of $P(i+1)$ in order to conclude that $f_{P(i-j) \oplus P(i)}^{-1}(f_{P(i)} P(i+1)) = P(i+1)$. For the second equality, we note that $f_{P(i)} P(i+1) \cong S(i+1)$ is a simple projective in $J(P(i))$, and hence clearly $\Hom_{J(P(i))}(f_{P(i)}P(i-j), S(i+1)) = 0$. Finally, for the third equality of \cref{eq:fail1} we again use the isomorphism $f_{P(i)}P(i+1) \cong S(i+1)$ and observe that due to the ordering of the vertices $i$, $i+1$ and $i-j$ of $\Lambda$ we obtain an isomorphism $f_{P(i+1)}^{-1}(f_{P(i+1)\oplus P(i)} P(i-j)) \cong P(i-j)$. 

        On the other hand, consider the following sequence of mutations.
        \begin{equation}\label{eq:fail2}
        \begin{aligned}
            \overline{\varphi}_1 \circ \overline{\varphi}_2 \circ \overline{\varphi}_1(M) &= \overline{\varphi}_1 \circ  \overline{\varphi}_2(P(i) \oplus P(i+1) \oplus \rad P(i+1)) &\text{(TF-1a in $J(\rad P(i+1))$)} \\
            &= \overline{\varphi}_1( P(i) \oplus \rad P(i+1) \oplus P(i+1)) &\text{(TF-3 in $\mods \Lambda$)}.
        \end{aligned}
        \end{equation}
    The first line of \cref{eq:fail2} follows from the observation that $f_{\rad P(i+1)} P(i+1) \cong S(i+1)$. Similar to above, the module $S(i+1)$ is a simple projective in $J(\rad P(i+1))$ and hence 
    \[ \Hom_{J(\rad P(i+1))}(f_{\rad P(i+1)}P(i), S(i+1))=0. \]
    The second mutation of \cref{eq:fail2} falls into case TF-3 since $\rad P(i+1)$ is not projective, the Bongartz completion of $P(i+1)$ consists of the sum of all projective $\Lambda$-modules and the co-Bongartz completion of $P(i+1)$ consists of its quotient modules by \cite[Prop. 4.11]{bkt}. This explains the second equality in \cref{eq:fail2}. It is not necessary to compute the final mutation, since the right-most entries of $\overline{\varphi}_1 \circ \overline{\varphi}_2 \circ \overline{\varphi}_1(M)$ and $\overline{\varphi}_2 \circ \overline{\varphi}_1 \circ \overline{\varphi}_2(M)$ do not coincide as $S(i+1) \not \cong P(i+1)$ and applying $\overline{\varphi}_1$ does not affect this entry.

    In conclusion, the mutation of TF-ordered $\tau$-rigid modules in $\mods \Lambda$ does not satisfy the braid group relation B2. Thus \cite[Def. 3.28, Prop. 3.32]{bkt} yields that the mutation of $\tau$-exceptional sequences in $\mods \Lambda$ does not satisfy the braid group relation B2.
    \end{proof}

\printbibliography

\end{document}